\documentclass[12pt,leqno]{amsart}
\usepackage[a4paper,margin=1in]{geometry}
\usepackage{amsmath,amssymb,amsthm,mathtools}
\usepackage{bm}
\usepackage{graphicx}
\usepackage{float}
\usepackage{hyperref}
\usepackage{enumitem}
\usepackage{booktabs}
\usepackage{cite}
\hypersetup{colorlinks=true,linkcolor=blue,citecolor=blue,urlcolor=blue}

\newtheorem{theorem}{Theorem}[section]
\newtheorem{proposition}[theorem]{Proposition}
\newtheorem{lemma}[theorem]{Lemma}
\newtheorem{corollary}[theorem]{Corollary}
\theoremstyle{definition}

\newtheorem{remark}[theorem]{Remark}

\newcommand{\cT}{\mathcal T}
\numberwithin{equation}{section}

\title[Mixed Torsion Cells and Polygonal Monotonicity]{Mixed Torsion on Right Triangles and the P\'olya--Szeg\H{o} Monotonicity Problem for Regular Polygons}
\author{Changfeng Gui}
\address{Department  of  Mathematics,  University  of  Macau,  Macau  SAR,  P. R. China}
\address{Zhuhai UM Science and Technology Research Institute, Hengqin, Guangdong, 519031,  P. R. China}
\email{changfenggui@um.edu.mo}
\author{Yeyao Hu}
\address{School of Mathematics and Statistics, HNP-LAMA, Central South University, Changsha, Hunan 410083, P. R. China}
\email{huyeyao@gmail.com}
\author{Qinfeng Li}
\address{School of Mathematics, Hunan University, Changsha, P.R. China.}
\email{liqinfeng1989@gmail.com}
\author{Chenyang Zhang}
\address{School of Mathematics, Hunan University, Changsha, P.R. China.}
\email{2020531070@qq.com}
\date{}

\begin{document}

	\begin{abstract}
		Motivated by the polygonal P\'olya--Szeg\H{o} conjecture for torsional rigidity, we study two monotonicity problems for torsional rigidity.  The first concerns a mixed torsion problem on fixed-area right triangles, with a Dirichlet condition on one leg and Neumann conditions on the other leg and on	the hypotenuse.  We prove that the mixed torsional rigidity strictly increases as the ratio of the Neumann leg to the Dirichlet leg increases.  The proof	uses a Hadamard shape derivative, a Pohozaev-type identity, and a monotonicity result for the mixed torsion function. We also prove a similar result for the mixed ground state of Laplacian.
		
		The second concerns regular polygons.  If \(P_N\) denotes the regular
		\(N\)-gon of area \(\pi\), we prove, by a purely analytic
		Schwarz--Christoffel/Bergman analytic-content argument, that
		\[
		T^D(P_{N+1})>T^D(P_N),\qquad N\ge3,
		\]
		where \(T^D\) is the Dirichlet torsional rigidity.  We also obtain the
		asymptotic expansion
		\[
		T^D(P_N)=\frac{\pi}{8}-\frac{\pi\zeta(3)}{N^3}
		+\frac{\pi^5}{45N^4}+O(N^{-5}).
		\]
	\end{abstract}

	\maketitle
	
	\section{Introduction}
	
	The starting point of this paper is the polygonal P\'olya--Szeg\H{o} conjecture for torsional rigidity.  If \(\Omega\subset\mathbb R^2\) is a bounded Lipschitz domain, let
	\[
	T^D(\Omega)=\int_\Omega u\,dx,
	\]
	where \(u\) is the Dirichlet torsion function,
	\[
	-\Delta u=1\quad\hbox{in }\Omega,
	\qquad
	u=0\quad\hbox{on }\partial\Omega .
	\]  The conjecture asserts that, among all \(N\)-gons of a prescribed area, the regular \(N\)-gon uniquely maximizes \(T^D\).  This is the torsional counterpart of the polygonal Faber--Krahn problem for the first Dirichlet eigenvalue.  P\'olya and Szeg\H{o} proved the classical isoperimetric inequalities for torsion and principal frequency and settled the polygonal extremal problems in the triangular and quadrilateral cases by symmetrization methods \cite{Polya1948,PolyaSzego}.  For \(N\ge5\), however, the fixed-\(N\) polygonal torsion problem remains open in general; among the closely related P\'olya--Szeg\H{o} polygonal conjectures, the logarithmic-capacity case was settled by Solynin--Zalgaller \cite{SolyninZalgaller}.
	
	For the polygonal Dirichlet eigenvalue problem, the direct method together with a perturbation argument gives existence of minimizers and monotonicity of the optimal values with respect to the number of sides; see Henrot \cite[Chapter~3]{HenrotBook}.  The similar argument applies to the maximization of Dirichlet torsional rigidity among fixed-area polygons.  Thus, if \(t_N\) denotes the maximal torsional rigidity among fixed-area \(N\)-gons, then \(t_N\) is attained and the sequence \(t_N\) is strictly increasing in \(N\).\footnote{The
		torsional version of this direct-method argument was pointed out to us by
		A.~Henrot.}
	The difficult open question is whether
	\[
	t_N=T^D(P_N),
	\]
	where \(P_N\) is the regular \(N\)-gon of the same area.  On the torsion side, Fragal\`a--Gazzola--Lamboley proved sharp estimates for
	\(p\)-torsion and obtained a partial polygonal P\'olya--Szeg\H{o} result for
	polygons with sufficiently large asymmetry, while Bucur--Fragal\`a established
	regular-polygon optimality for several related constrained variational
	energies on convex polygons \cite{FragalaGazzolaLamboley,BucurFragala2021}.
	Other relevant approaches include torsional inequalities for tangential
	polygons \cite{Keady} and the Bergman analytic-content viewpoint of
	Fleeman--Simanek and Kraus--Simanek, which is particularly useful for
	polygonal domains and is also used in the present paper
	\cite{FleemanSimanek,KrausSimanek}.
	
	The present paper does not solve the fixed-\(N\) maximization problem.  Instead, it addresses a closely related monotonicity question for the regular polygons themselves.  We normalize \(P_N\) to have area \(\pi\).  Since the disk of area \(\pi\) has torsional rigidity \(\pi/8\) and is the Saint-Venant maximizer among all planar domains, it is natural to ask whether
	\[
	T^D(P_3)<T^D(P_4)<T^D(P_5)<\cdots<\frac{\pi}{8}.
	\]
	Before this work, even this monotonicity of the regular-polygon sequence \(T^D(P_N)\) was not known.  This question is parallel to several recent regular-polygon monotonicity results for other spectral quantities.  Dahne--G\'omez-Serrano--Pech-Alberich proved that the first Dirichlet eigenvalue of the fixed-area regular \(N\)-gon is strictly decreasing in \(N\), settling a conjecture of Antunes--Freitas \cite{AntunesFreitas,DahneGomezSerranoPechAlberich}.  Cheng--Gui--Hu--Li--Yao proved that the first nonzero Steklov eigenvalue of the perimeter-normalized regular \(N\)-gon is strictly increasing in \(N\) \cite{ChengGuiHuLiYaoSteklov}.  Other recent work on the polygonal Faber--Krahn problem and on regular-polygon eigenvalue asymptotics includes \cite{BogoselBucur,Indrei,Nitsch,Berghaus}.  These results provide a natural background for the torsional monotonicity problem considered here.
	
	Our first observation is that the regular-polygon torsion problem is naturally linked to a mixed boundary problem on a right triangle.  For \(q>0\), let
	\[
	\Omega_q=\{(x,y):0<x<q,\ 0<y<1-x/q\},
	\]
	with vertices
	\[
	A=(0,0),\qquad B_q=(q,0),\qquad C=(0,1).
	\]
	On \(\Omega_q\) we impose a Dirichlet condition on the vertical side \(AC\) and homogeneous Neumann conditions on the horizontal side \(AB_q\) and the hypotenuse \(B_qC\):
	\begin{equation}\label{eq:intro-mixed-problem}
		\begin{cases}
			-\Delta u_q=1 & \text{in }\Omega_q,\\
			u_q=0 & \text{on }AC,\\
			\partial_\nu u_q=0 & \text{on }AB_q\cup B_qC.
		\end{cases}
	\end{equation}
	Here \(\nu\) denotes the outward unit normal on the Neumann part.  We define the mixed torsional rigidity
	\[
	T(\Omega_q)=\int_{\Omega_q}u_q\,dx\,dy,
	\qquad
	F(q):=\frac{T(\Omega_q)}{q^2}.
	\]
	Since \(|\Omega_q|=q/2\) and torsional rigidity scales like length to the fourth power, \(F(q)\) is, up to an absolute constant, the fixed-area normalization of the mixed torsional rigidity.
	
	The connection with regular polygons is exact.  Put
	\[
	\alpha=\frac{\pi}{N},\qquad q=\cot\alpha.
	\]
	The regular \(N\)-gon can be decomposed into reflected copies of \(\Omega_q\), and after scaling to area \(\pi\) one obtains the cell relation
	\begin{equation}\label{eq:intro-cell-relation}
		T^D(P_N)
		=
		2\pi\alpha\,F(\cot\alpha)
		=
		\frac{2\pi^2}{N}\,
		\frac{T(\Omega_{\cot(\pi/N)})}{\cot^2(\pi/N)}.
	\end{equation}
	Thus the discrete monotonicity of \(T^D(P_N)\) leads naturally to a continuous mixed-cell problem: is \(F(q)\) increasing for all \(q>0\)?
	
	This mixed-cell problem also has a direct physical interpretation.  The function \(u_q\) is the steady-state temperature generated by a uniform heat source in a right triangular plate when one side is kept at zero temperature and the other two sides are insulated.  After rescaling the triangle to fixed area, \(F(q)\) is proportional to the total, and hence also the average, steady-state temperature.  Therefore the question asks whether, under fixed area, increasing the ratio between the Neumann right side and the Dirichlet right side always increases the average temperature, provided that the hypotenuse is also insulated.
	
	The present mixed-cell monotonicity should be compared with the
	Dirichlet-reflected configuration in which one right side carries a Neumann condition while the other right side and the hypotenuse carry Dirichlet
	conditions.  After reflection across the Neumann side, this becomes the
	Dirichlet torsion problem on an isosceles triangle.  Using an argument based
	on continuous Steiner symmetrization, in the spirit of Siudeja's work on
	eigenvalue inequalities for triangles \cite{SiudejaTriangles}, one obtains that the fixed-area normalized torsional rigidity is
	increasing for \(0<q<\sqrt3\) and decreasing for \(q>\sqrt3\).  More recently, Huang--Li--Xie--Yang gave an independent flow and shape-derivative proof of the same monotonicity result \cite{HuangLiXieYang2025}.  Their argument
	reduces the sign of the shape derivative to a comparison of boundary gradients on the reflected Dirichlet domain.  In the present problem the hypotenuse is Neumann, and hence the same Dirichlet reflection mechanism is not available.  In fact, the answer is completely different for the mixed-cell
	problem: the fixed-area normalized mixed torsional rigidity is strictly increasing for all \(q>0\).

	\begin{theorem}[Monotonicity of the normalized mixed cell]\label{thm:intro-mixed-normalized}
		The function
		\[
		q\longmapsto F(q):=\frac{T(\Omega_q)}{q^2}
		\]
		is strictly increasing on \((0,\infty)\).  More precisely, if
		\[
		A(q)=\int_{\Omega_q}(u_q)_x^2,\qquad
		B(q)=\int_{\Omega_q}(u_q)_y^2,
		\]
		then
		\[
		\frac{d}{dq}\left(\frac{T(\Omega_q)}{q^2}\right)
		=
		\frac{A(q)-B(q)}{q^3}>0.
		\]
	\end{theorem}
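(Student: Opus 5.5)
The plan has two parts. First I derive the derivative formula by a change of variables and the envelope principle, which plays the role of the Hadamard shape derivative. Then I prove the sign $A(q)>B(q)$ through a Pohozaev-type identity combined with a monotonicity property of $u_q$.

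\textbf{Step 1 (derivative formula).} I use the variational characterization
\[
T(\Omega_q)=\max\Big\{\int_{\Omega_q}\big(2v-|\nabla v|^2\big):\ v\in H^1(\Omega_q),\ v=0\text{ on }AC\Big\},
\]
whose maximizer is $u_q$. Testing the weak equation with $u_q$ gives the energy identity $T(\Omega_q)=A(q)+B(q)$.

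I pull back to the fixed triangle $\Omega_1$ via $x=qs$, $y=t$. The functional becomes
\[
J_q(w)=q\int_{\Omega_1}\big(2w-q^{-2}w_s^2-w_t^2\big)\,ds\,dt .
\]
The admissible class no longer depends on $q$, since the Dirichlet side $s=0$ is fixed. The maximizer is $w_q(s,t)=u_q(qs,t)$, which depends smoothly on $q$ by elliptic regularity for this mixed problem. The envelope theorem (Danskin) therefore gives
\[
\frac{dT}{dq}=\partial_qJ_q(w)\big|_{w=w_q}=\frac1q\int_{\Omega_q}\big(2u_q-|\nabla u_q|^2\big)+\frac2q\int_{\Omega_q}(u_q)_x^2=\frac{T+2A}{q}.
\]
Using $2T-A-B=T$, it follows that
\[
\frac{d}{dq}\frac{T}{q^2}=\frac{T+2A-2T}{q^3}=\frac{A-B}{q^3},
\]
which is the stated formula.

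\textbf{Step 2 (Pohozaev identity).} I integrate
\[
\operatorname{div}\Big(x(u_q)_x\nabla u_q-\tfrac12x|\nabla u_q|^2e_1\Big)=(u_q)_x^2-\tfrac12|\nabla u_q|^2+x(u_q)_x\Delta u_q
\]
over $\Omega_q$, using $-\Delta u_q=1$. The boundary contributions behave as follows:
\begin{itemize}
\item On $AC$ everything vanishes, because $x=0$.
\item On $AB_q$ we have $(u_q)_y=0$ and $n_x=0$, so everything vanishes.
\item On the hypotenuse only $-\tfrac12x|\nabla u_q|^2n_x$ survives, because $\partial_\nu u_q=0$ there.
\end{itemize}
This yields
\[
\tfrac12\big(A-B\big)=\int_{\Omega_q}x(u_q)_x-\tfrac12\int_{B_qC}x|\nabla u_q|^2n_x\,d\sigma .
\]
Corner singularities at $A$ and $C$ are mild: the angles are $\pi/2$ with mixed data, so $u_q\in H^2$ near them. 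The identity can be justified by cutting out small disks.

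\textbf{Step 3 (sign; the main obstacle).} The interior term $\int x(u_q)_x$ is positive once I know $(u_q)_x\ge0$. This is the monotonicity result for the mixed torsion function. I would prove it by applying the maximum principle to $w=(u_q)_x$, which is harmonic, using the following boundary information:
\begin{itemize}
\item On $AC$, $w=\partial_xu_q>0$ by the Hopf lemma, since $u_q>0$ inside.
\item On $AB_q$, $\partial_yw=0$ after even reflection in $y$.
\item On the hypotenuse, the tangential derivative of $u_q$ is the only nonzero gradient component, so $w$ has the sign of that tangential derivative; this must also be controlled.
\end{itemize}

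The competing hypotenuse term is negative, because $n_x>0$ there. To absorb it, I would add a second Pohozaev identity with the field $V=(x-q,y)$, which is radial from $B_q$. This field is tangent to both $AB_q$ and the hypotenuse, and in two dimensions its quadratic gradient terms cancel. It gives a relation between $T$, the boundary integral over $AC$, and $\int V\cdot\nabla u_q$. Combining it with Step 2 should express $A-B$ as a positive combination of $\int_{AC}(\partial_xu_q)^2$-type terms and $\int x(u_q)_x$.

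Establishing this sign, that is, making the hypotenuse contribution be dominated, is where I expect the real difficulty. If the direct combination fails, I would try a continuity argument in $q$: $A-B$ is positive as $q\to\infty$ and as $q\to0$ (computable asymptotically), and a vanishing of $A-B$ at some $q$ would force a critical configuration that the monotonicity of $u_q$ rules out.
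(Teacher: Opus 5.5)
Your Step 1 is correct, and it takes a different route from the paper. The paper gets the derivative formula from the Hadamard boundary formula combined with the $xu_x$ Pohozaev identity, giving $T'(q)=(2T+A-B)/q$. Your pullback to $\Omega_1$ with the envelope principle gives $T'(q)=(T+2A)/q=(3A+B)/q$, which is the same thing, and it avoids the shape-derivative machinery and the hypotenuse traces altogether.

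The genuine gap is the sign $A>B$. Your Step 2 identity is correct, but with the multiplier $xu_x$ the positive term $\int x\,u_x$ competes with the nonpositive term $-\tfrac12\int_{B_qC}x|\nabla u|^2n_x\,d\sigma$. Your proposed remedy cannot remove that term. For $V=(x-q,y)$ one has $V\cdot n=0$ and $\partial_\nu u=0$ on both Neumann sides, and the quadratic terms cancel in two dimensions. So that identity reduces to $T(\Omega_q)=\frac q4\int_{AC}(u_q)_x^2\,dy$, which contains no hypotenuse term, and no combination with Step 2 eliminates it. The continuity-in-$q$ fallback is not an argument. Your maximum-principle sketch for $(u_q)_x\ge0$ is also left open exactly on the hypotenuse, where $(u_q)_x=-q(u_q)_y$.

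The missing idea is the \emph{vertical} sign $(u_q)_y<0$, used with the multiplier $yu_y$ instead of $xu_x$. Multiplying $-\Delta u=1$ by $yu_y$ gives
\[
A-B=\int_{\partial\Omega_q}y|\nabla u|^2\nu_y\,ds-2\int_{\Omega_q}y\,u_y\,dx\,dy .
\]
The term $\int_{\partial\Omega_q}y\,u_y\,\partial_\nu u\,ds$ drops out, since $u_y=0$ on $AC$, $y=0$ on $AB_q$, and $\partial_\nu u=0$ on $B_qC$. The remaining boundary integral is nonnegative, because only the hypotenuse contributes and $y>0$, $\nu_y>0$ there. Hence $A>B$ as soon as $u_y<0$.

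The paper proves $u_y<0$ by the maximum principle for the harmonic function $w=u_y$, which vanishes on $AB_q\cup AC$. Differentiating $u_x+qu_y=0$ along the hypotenuse and using $\Delta u=-1$ gives $(q^2-1)w_x-2qw_y=q$. With $N=(1,q)$ this reads $-N\cdot\nabla w+q\,(q,-1)\cdot\nabla w=q$. At a positive maximum on the open hypotenuse the tangential derivative vanishes, so $N\cdot\nabla w=-q<0$, contradicting Hopf's lemma. This also supplies the missing hypotenuse sign $u_x=-qu_y>0$ for your $x$-monotonicity argument, although $u_x>0$ is not needed for the theorem itself.
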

	
	The proof of Theorem~\ref{thm:intro-mixed-normalized} is based on a Hadamard shape derivative and a Pohozaev-type identity.  The derivative formula gives
	\[
	\frac{d}{dq}\left(\frac{T(\Omega_q)}{q^2}\right)=\frac{A(q)-B(q)}{q^3},
	\]
	and the sign \(A(q)>B(q)\) follows from another Pohozaev identity together with the following directional sign property of the mixed torsion function.  This result is a technical input for the shape-functional monotonicity; it is proved directly by applying the maximum principle to the derivatives of \(u_q\).
	
	\begin{proposition}[Directional sign of the mixed-cell torsion function]\label{thm:intro-mixed-gradient}
		For every \(q>0\), the solution of \eqref{eq:intro-mixed-problem} satisfies
		\[
		(u_q)_x>0,
		\qquad
		(u_q)_y<0
		\qquad\text{in }\Omega_q.
		\]
		In particular, the maximum of \(u_q\) on \(\overline{\Omega_q}\) is attained at the Neumann vertex \((q,0)\).
	\end{proposition}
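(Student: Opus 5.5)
The plan is to apply the maximum principle to the two harmonic functions \(w=(u_q)_y\) and \(v=(u_q)_x\). I treat \(w\) first and then use its sign to control \(v\). Write \(u=u_q\). On the hypotenuse \(B_qC\), take the outward unit normal \(n=(1,q)/\sqrt{1+q^2}\), so \(n_y>0\), and the unit tangent \(\tau=(q,-1)/\sqrt{1+q^2}\). Since \(-\Delta u=1\) is constant, \(v\) and \(w\) are harmonic in \(\Omega_q\).

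\textbf{Step 0: corner regularity.} The angle at \(B_q\) is acute with Neumann data on both sides. The angle at \(C\) is also acute, with mixed Dirichlet--Neumann data, so the singular exponent there is \(\pi/(2\theta_C)>1\). The angle at \(A\) is a right angle with mixed data. This is the borderline case, so I would treat \(A\) by an explicit local expansion, or simply by reflection across the Neumann leg \(AB_q\), which turns it into a Dirichlet corner of angle \(\pi\). In all cases \(u\in C^1(\overline{\Omega_q})\), and \(u\) is smooth up to the open sides. I expect this bookkeeping, and ruling out extrema of \(v,w\) exactly at the corners, to be the main technical obstacle. I would handle it with the local asymptotics just described, or with an approximation argument that excises small balls around the corners.

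\textbf{Step 1: \(w<0\).} Boundary values on the legs: on \(AC\) we have \(u\equiv0\), so \(w=\partial_y u=0\); on \(AB_q\) the Neumann condition gives \(w=0\). By continuity \(w=0\) at all three vertices. Now suppose \(w\) has a positive maximum on \(\overline{\Omega_q}\). It is not attained in the interior, by the strong maximum principle, so it is attained at a point \(p\) in the open hypotenuse. Since \(\partial_n u\equiv0\) along the straight segment, \(\partial_\tau\partial_n u=0\) there. Hence
\[
\partial_\tau w=\partial_y\partial_\tau u=\tau_y\,u_{\tau\tau},
\qquad
\partial_n w=\partial_y\partial_n u=n_y\,u_{nn}=n_y\,(-1-u_{\tau\tau}).
\]
At a maximum along the segment, \(\partial_\tau w(p)=0\), so \(u_{\tau\tau}(p)=0\) and \(\partial_n w(p)=-n_y<0\). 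This contradicts the Hopf lemma, which forces \(\partial_n w(p)>0\) at a positive boundary maximum. Therefore \(w\le0\), and the strong maximum principle upgrades this to \(w<0\) in \(\Omega_q\), since \(w\not\equiv0\).

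\textbf{Step 2: \(v>0\).} On the hypotenuse, \(\partial_nu=0\) means \(u_x+q u_y=0\), so \(v=-qw\). By Step 1 and continuity, \(v\ge0\) there. On \(AB_q\), with \(\nu=(0,-1)\), we get \(\partial_\nu v=-u_{xy}=-\partial_x u_y=0\), since \(u_y\equiv0\) along that leg; so \(v\) satisfies a homogeneous Neumann condition. On \(AC\), with \(\nu=(-1,0)\), we get \(\partial_\nu v=-u_{xx}=1+u_{yy}=1\), since \(u_{yy}=0\) along \(AC\).

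Suppose \(v\) has a negative minimum. By the strong maximum principle it must sit on the boundary, and it cannot sit on the hypotenuse, where \(v\ge0\). On the open legs the Hopf lemma demands \(\partial_\nu v<0\) at a negative boundary minimum. This is contradicted by \(\partial_\nu v=0\) on \(AB_q\) and by \(\partial_\nu v=1\) on \(AC\). The corners are handled as in Step 0; in particular, at \(A\), reflecting across \(AB_q\) removes the Neumann side. Hence \(v\ge0\), and the strong maximum principle gives \(v>0\).

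\textbf{Step 3: location of the maximum.} Since \(u\) is increasing in \(x\) and decreasing in \(y\) on \(\overline{\Omega_q}\), its maximum is attained where \(x\) is largest and \(y\) smallest. More precisely, from any point one can move right and down inside \(\overline{\Omega_q}\) until reaching \(B_q=(q,0)\), and \(u\) strictly increases along such a path. So the maximum of \(u\) on \(\overline{\Omega_q}\) is attained at the Neumann vertex \(B_q\).
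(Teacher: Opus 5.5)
Your proposal is correct and follows essentially the same route as the paper. You run the same maximum-principle argument for $u_y$ using the tangentially differentiated Neumann condition on the hypotenuse (your unit-frame identity $\partial_n w=-n_y$ at a tangential critical point is exactly the paper's $N\cdot\nabla w=-q$), and then the same argument for $u_x$ with the derived Neumann condition on $AB_q$; the only variation is on the Dirichlet leg, where you use $\partial_\nu u_x=-u_{xx}=1$ instead of the paper's positivity of $u$ plus Hopf, which is a harmless and slightly more self-contained substitute, and your corner discussion is more explicit than the paper's.
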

	
	The \(x\)-monotonicity is related to the semilinear mixed-boundary monotonicity theorem of Li--Yao for triangles with a non-obtuse Neumann vertex \cite{LiYaoMixedTriangles}.  Here the proof is torsion-specific, and gives, in addition, the vertical sign \((u_q)_y<0\).  This vertical sign is the key point for the torsional rigidity: it implies \(A(q)>B(q)\), and therefore proves the strict increase of \(F(q)\).

	We next study the large-\(q\) behavior of the mixed cell.  By translating the vertex \(B_q\) to the origin, scaling, and reflecting across the horizontal Neumann side, \(\Omega_q\) is transformed into a thin sector-like mixed domain.  A sector perturbation argument gives the following expansion.
	
	\begin{theorem}[Large-\(q\) asymptotics of the mixed cell]\label{thm:intro-triangle-asymptotics}
		As \(q\to\infty\),
		\[
		T(\Omega_q)
		=
		\frac{q^3}{16}
		+
		\frac q{48}
		-
		\frac{\zeta(3)}{2\pi^3}
		+
		O(q^{-1}).
		\]
	\end{theorem}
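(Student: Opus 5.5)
The approach is an explicit approximate solution plus a boundary-layer correction near the Dirichlet side. Translate \(B_q\) to the origin and reflect across the Neumann leg \(AB_q\). This gives the isosceles triangle \(\widetilde\Omega_q=\{(x,y):0<x<q,\ |y|<x/q\}\) (after \(x\mapsto q-x\)), with Neumann conditions on the two long sides \(y=\pm x/q\), which lie on rays through the origin of half-opening \(\alpha=\arctan(1/q)\). The Dirichlet condition sits on the short side \(x=q\), \(|y|\le1\). By evenness, \(T(\Omega_q)=\tfrac12\int_{\widetilde\Omega_q}\tilde u\).

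\emph{Step 1 (radial part).} The radial polynomial \(p=c-\tfrac14(x^2+y^2)\) satisfies \(-\Delta p=1\) and \(\partial_\nu p=0\) exactly on both rays. Its trace on \(x=q\) is \(c-\tfrac{q^2}{4}-\tfrac{y^2}{4}\), which is not constant. I would choose \(c=\tfrac{q^2}{4}+\tfrac1{12}\), so that the trace has zero mean over \(|y|<1\). Writing \(\tilde u=p+h\), the remainder \(h\) is harmonic, satisfies \(\partial_\nu h=0\) on the rays, and has \(h=\tfrac{y^2}{4}-\tfrac1{12}\) on \(x=q\). The term \(\tfrac12\int_{\widetilde\Omega_q}p\) is an elementary integral over the triangle. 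It produces \(q^3/16\) together with an \(O(q)\) term, and I expect this to combine with the later terms into \(\tfrac{q}{48}\); the exact bookkeeping of \(c\) is routine.

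\emph{Step 2 (the correction and the \(\zeta(3)\) term).} To evaluate \(\int h\) without solving for \(h\) exactly, I would use Green's identity against \(p\). Since \(-\Delta p=1\) and both \(h\) and \(p\) are Neumann on the rays, only the Dirichlet side contributes:
\[
\int_{\widetilde\Omega_q}h=\int_{\{x=q\}}\bigl(p\,\partial_\nu h-h\,\partial_\nu p\bigr)\,dy .
\]
The second term is explicit. For the first, I approximate \(h\) near \(x=q\) by its strip boundary layer, obtained by separating variables in the strip \(|y|<1\) with Neumann walls:
\[
h\approx\sum_{k\ge1}a_k\cos(k\pi y)\,e^{-k\pi(q-x)},\qquad a_k=\frac{(-1)^k}{k^2\pi^2}.
\]
Then \(\partial_\nu h=\sum_k k\pi\,a_k\cos(k\pi y)\) on \(x=q\). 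Pairing this with the \(-y^2/4\) part of \(p\) produces \(\sum_k k\pi\,a_k^2=\pi^{-3}\sum_k k^{-3}\), up to normalization. This is exactly the source of the \(-\zeta(3)/(2\pi^3)\) constant.

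\emph{Step 3 (error control, the main obstacle).} The domain is a thin wedge rather than a strip, so the true modes are \(r^{k\pi/\alpha}\cos(k\pi\theta/\alpha)\) in polar coordinates, and the Dirichlet side is the segment \(r=q/\cos\theta\) rather than an arc. I would expand \(h\) in these wedge modes. Since \(\pi/\alpha=\pi q+O(q^{-1})\) and \(\theta=y/q+O(q^{-3})\), the coefficients and normal derivatives agree with the strip ones up to \(O(q^{-1})\) uniformly in \(k\), with geometric decay in \(k\). The wedge expansion is exact, so the only error comes from this comparison, and it should be \(O(q^{-1})\). The delicate part is making this uniform in \(k\), because the segment-versus-arc mismatch mixes modes. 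I would first try handling it with a maximum-principle (Neumann–Dirichlet comparison) bound on the difference between \(h\) and the truncated wedge series. This bound would give \(\|h-h_{\mathrm{strip}}\|_{L^\infty}=O(q^{-2})\) near the edge, with exponential decay of \(h\) away from it.
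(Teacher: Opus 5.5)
Your Steps 1--2 are correct and run parallel to the paper, which rescales to the unit-size triangle $S_\alpha$ and splits off the radial solution $(1-r^2)/4$; your $p$ is the same object up to scaling and an additive constant. The bookkeeping you defer is exact. First, $\frac12\int_{\widetilde\Omega_q}p=\frac{q^3}{16}+\frac{q}{48}$ on the nose. Second, the term $\int_{\{x=q\}}h\,\partial_\nu p\,dy$ vanishes, because $\partial_\nu p=-q/2$ is constant there and $h$ has zero mean on $x=q$. Third, since $p=-h$ on $x=q$, your Green identity reads $\int_{\widetilde\Omega_q}h=-\int_{\widetilde\Omega_q}|\nabla h|^2$. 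Hence
\[
T(\Omega_q)=\frac{q^3}{16}+\frac{q}{48}-\frac12\int_{\widetilde\Omega_q}|\nabla h|^2
\]
exactly. The theorem is therefore precisely the statement $\int_{\widetilde\Omega_q}|\nabla h|^2=\zeta(3)/\pi^3+O(q^{-1})$. Your half-strip computation gives the correct leading value, so all of the content sits in Step~3.

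That is where the gap is: Step~3 is a plan rather than an argument, and the estimates it aims for do not hold. The function $h$ does not decay away from the Dirichlet side. There is no flux through the Neumann rays, so the averages of $h$ over arcs $r=\rho<q$ are independent of $\rho$. Consequently $h\to h(0)$ exponentially fast on the unit scale, and $\int_{\widetilde\Omega_q}h=q\,h(0)+O(q^{-1})$. Combined with the identity above, this gives $h(0)=-q^{-1}\int|\nabla h|^2+O(q^{-2})\approx-\zeta(3)/(\pi^3q)$. Thus $h-h_{\mathrm{strip}}$ is of exact order $q^{-1}$ on most of $\widetilde\Omega_q$, and the first correction to the strip profile is genuinely $O(q^{-1})$, not $O(q^{-2})$.

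Moreover, a sup bound would not control the flux $\partial_\nu h$ on $x=q$ that you pair with $p$. The difference $h-h_{\mathrm{strip}}$ violates the Neumann condition on the slanted sides. Also, on $x=q$ there is no geometric decay in $k$: already $\partial_\nu h_{\mathrm{strip}}=\pi^{-1}\sum_k(-1)^kk^{-1}\cos(k\pi y)=-\pi^{-1}\log|2\cos(\pi y/2)|$ blows up at the corners.

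The missing idea is to estimate the energy variationally instead of pointwise. The paper substitutes $r=\sec\theta\,\rho$ in $S_\alpha$. This maps the admissible class bijectively onto that of the circular sector $C_\alpha$, and turns the Dirichlet integral into $\int_{C_\alpha}\bigl(\rho V_\rho^2+\rho^{-1}(V_\theta-\rho\tan\theta\,V_\rho)^2\bigr)\,d\rho\,d\theta$, which is within a factor $1+O(\alpha)$ of the sector form. Minimizing gives two-sided bounds $E_{S_\alpha}=(1+O(\alpha))E_{C_\alpha}$. The sector energy of the data $\tan^2\theta/4$ is then computed exactly by a cosine series as $\zeta(3)\alpha^4/\pi^3+O(\alpha^6)$. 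Rescaled, this is your constant with an $O(q^{-1})$ error, and the arc-versus-segment mode mixing never has to be confronted.
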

	
	Substituting \(q=\cot(\pi/N)\) in the cell relation \eqref{eq:intro-cell-relation} gives the leading regular-polygon asymptotic
	\[
	T^D(P_N)=\frac{\pi}{8}-\frac{\pi\zeta(3)}{N^3}+O(N^{-4}).
	\]
	To identify the next coefficient and to prove monotonicity for every \(N\ge3\), we use a second input: the Schwarz--Christoffel representation of the regular polygon combined with the Bergman analytic-content formula for torsional rigidity.  If a Taylor series for a conformal map from the disk is available, one can compute the stress function or the torsional rigidity; see P\'olya--Szeg\H{o}, Sokolnikoff, and the modern Bergman analytic-content formulation of Fleeman--Simanek \cite{PolyaSzego,Sokolnikoff,FleemanSimanek}.  The resulting exact Schwarz--Christoffel--Bergman series yields coefficient-level information and direct derivative estimates.
	
	\begin{theorem}[Asymptotics for regular polygons]\label{thm:intro-polygon-asymptotics}
		Let \(P_N\) be the regular \(N\)-gon of area \(\pi\).  Then, as \(N\to\infty\),
		\[
		T^D(P_N)
		=
		\frac{\pi}{8}
		-
		\frac{\pi\zeta(3)}{N^3}
		+
		\frac{\pi^5}{45N^4}
		+
		O(N^{-5}).
		\]
	\end{theorem}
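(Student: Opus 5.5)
The plan is to bypass the mixed cell at order $N^{-4}$. I would work directly with the Schwarz--Christoffel map of $P_N$ and the Bergman analytic-content formula, turn $T^D(P_N)$ into an explicit series, and expand that series in $1/N$.

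\textbf{Step 1: exact series.} By symmetry the map from the unit disk $\mathbb D$ onto $P_N$ is
\[
f(z)=c_N\int_0^z(1-w^N)^{-2/N}\,dw=c_N\sum_{k\ge0}\frac{(2/N)_k}{k!}\,\frac{z^{kN+1}}{kN+1},
\]
with $c_N>0$ fixed by the area condition $\pi=\int_{\mathbb D}|f'|^2=\pi c_N^2\sum_k b_k^2/(kN+1)$, where $b_k=(2/N)_k/k!$. For the torsion I use the analytic-content characterization
\[
T^D(\Omega)=\tfrac14\,\operatorname{dist}_{L^2(\Omega)}\bigl(\bar z,A^2(\Omega)\bigr)^2 ,
\]
which equals $\pi/8$ for the unit disk. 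Pulling back by $f$ and using $\int_\Omega|h|^2=\int_{\mathbb D}|h\circ f|^2|f'|^2$, this becomes $\tfrac14\operatorname{dist}_{L^2(\mathbb D)}\bigl(\overline{f}f',A^2(\mathbb D)\bigr)^2$. Expand $\overline{f}f'$ in the monomials $\bar z^m z^{n}$. The Bergman projection of $\bar z^mz^n$ is $\frac{n-m+1}{n+1}z^{n-m}$ for $n\ge m$ and $0$ otherwise. Because only exponents $\equiv1 \pmod N$ occur, the projection is diagonal in the residue classes. This gives $T^D(P_N)$ as an explicit finite-type double/triple series in the $b_k$ and in $1/(kN+1)$, with every term positive or of controlled sign.

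\textbf{Step 2: expansion in $1/N$.} Use $b_0=1$ and, for $k\ge1$,
\[
b_k=\frac{2}{Nk}\Bigl(1+\frac{2}{N}H_{k-1}+O\bigl(N^{-2}\log^2(k+1)\bigr)\Bigr).
\]
Also expand $1/(kN+1)=\frac{1}{kN}\bigl(1-\frac1{kN}+\cdots\bigr)$. Substituting into the series from Step 1 and into the normalization $c_N^2$, the $k=0$ terms give $\pi/8$. The terms with one index $k\ge1$ contribute sums of the form $\sum_k k^{-3}$ and $\sum_k k^{-4}$, together with Euler sums $\sum_k H_{k-1}k^{-3}$. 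The latter reduce to zeta values, for example $\sum_k H_{k-1}/k^3=\zeta(4)/4$. Collecting terms, I expect the $N^{-3}$ coefficient to be $-\pi\zeta(3)$. I expect the $N^{-4}$ coefficient to be $2\pi\zeta(4)=\pi^5/45$, coming from the first correction of $b_k$ and of $1/(kN+1)$. Terms with two indices $k,l\ge1$ carry $b_kb_l=O(N^{-2})$ together with extra factors $1/(kN)$, so they should be $O(N^{-5})$ after summation. As an independent check at order $N^{-3}$, I would compare with the expansion obtained by substituting $q=\cot(\pi/N)$ into Theorem~\ref{thm:intro-triangle-asymptotics} via \eqref{eq:intro-cell-relation}.

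\textbf{Main obstacle.} The delicate part is making the truncations rigorous, with uniform tails. The expansion of $b_k$ is not uniform in $k$, since $(2/N)_k/k!\sim k^{2/N-1}/\Gamma(2/N)$. I would therefore split each sum at $k\approx N$. For $k\le N$, use the expansion with explicit logarithmic remainders. For $k>N$, use the crude bound $b_k\le C(Nk)^{-1}k^{2/N}$, whose contributions are $O(N^{-5})$ because of the extra factor $(kN)^{-3}$. The second delicate point is bookkeeping the cancellation between the $N^{-4}$ parts of the normalization $c_N^2$ and of the polar-moment and projection terms. I would organize this by writing $T^D(P_N)=\frac{\pi}{8}\cdot\frac{S_1(N)}{S_0(N)^2}$ with explicit series $S_0,S_1$, and expanding the numerator and denominator separately.
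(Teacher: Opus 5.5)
Your plan is correct and follows the paper's route: the same Schwarz--Christoffel map, the Fleeman--Simanek analytic-content identity, the monomial Bergman projection onto the powers $z^{jN-1}$, and a termwise expansion in $t=1/N$ using $\sum_{k}H_{k-1}/k^3=\zeta(4)/4$. The differences are minor: the paper evaluates $\int_{P_N}|z|^2\,dA$ in closed form, and this term supplies half of the $t^4$ coefficient, namely $\pi^5/90$. It also gets the $O(N^{-5})$ remainder, and justifies the coefficient extraction, by proving that the exact series $\mathcal{T}(t)$ is holomorphic and bounded on $|t|\le 1/4$; this sidesteps the non-uniformity in $k$ that you handle by splitting the sums at $k\approx N$, and both mechanisms work.
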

	
	The \(N^{-3}\) term in Theorem~\ref{thm:intro-polygon-asymptotics} comes from the mixed-cell reduction and Theorem~\ref{thm:intro-triangle-asymptotics}.  The \(N^{-4}\) coefficient comes from the exact Schwarz--Christoffel--Bergman series.  More importantly, the same series can be estimated directly as an analytic function of \(t=1/N\), giving the following monotonicity theorem.
	
	\begin{theorem}[Strict monotonicity of regular-polygon torsion]\label{thm:intro-polygon-monotonicity}
		For every integer \(N\ge3\),
		\[
		T^D(P_{N+1})>T^D(P_N).
		\]
	\end{theorem}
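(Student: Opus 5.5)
Two routes are available. By Theorem~\ref{thm:intro-mixed-normalized} and the cell relation \eqref{eq:intro-cell-relation}, $T^D(P_N)=2\pi\alpha F(\cot\alpha)$ with $\alpha=\pi/N$. Since $\cot\alpha$ increases as $N$ increases, $F(\cot\alpha)$ increases, but the prefactor $2\pi\alpha$ decreases. The mixed-cell monotonicity alone therefore does not suffice, and the proof must come from the exact Schwarz--Christoffel--Bergman series, as the text announces.

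\textbf{Setting up the series.} The conformal map from the disk onto the regular $N$-gon is
\[
f(z)=c_N\int_0^z(1-w^N)^{-2/N}\,dw=c_N\sum_{k\ge0}a_k\frac{z^{kN+1}}{kN+1},
\]
where $a_k=\binom{k-1+2/N}{k}$ and $c_N$ is fixed by the area normalization $\pi\sum|b_n|^2 n=\pi$. The Bergman/analytic-content formula (P\'olya--Szeg\H{o}, Fleeman--Simanek) expresses the torsional rigidity of a simply connected domain in terms of the Taylor coefficients of $f$ and of $f'^{\,2}$. Specializing it gives an explicit series in $t=1/N$:
\[
T^D(P_N)=\Phi(t)=\frac{\pi}{8}-\sum_{k\ge1}R_k(t),
\]
with each $R_k$ an explicit rational/Gamma-type expression in $t$. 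I would first rederive Theorem~\ref{thm:intro-polygon-asymptotics} from this representation, as a consistency check on the $\zeta(3)t^3$ and $\pi^4t^4/45$ coefficients.

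\textbf{Reducing to a derivative sign.} I would extend $\Phi$ to real $t\in(0,1/3]$ and prove $\Phi'(t)<0$ there. That gives $T^D(P_{N+1})=\Phi(1/(N+1))>\Phi(1/N)$. I would split the interval at some $t_0$, say $t_0=1/12$.

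\textbf{Small $t$.} For $t\le t_0$, I would use the expansion
\[
\Phi(t)=\frac{\pi}{8}-\pi\zeta(3)t^3+\frac{\pi^5}{45}t^4+E(t),
\]
with an explicit bound on $|E'(t)|\le Ct^4$. This bound comes from termwise estimates of the series, using that each $a_k$ is analytic in $t$ with a controlled Taylor remainder (for example via $\Gamma(k+2t)/(\Gamma(2t)k!)=2t/k\cdot(1+O(t\log k))$) and summing over $k$. Then
\[
\Phi'(t)=-3\pi\zeta(3)t^2+\tfrac{4\pi^5}{45}t^3+O(t^4)<0
\]
for $t\le t_0$, which requires checking that $C$ is explicit and small enough.

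\textbf{Remaining range.} For $t\in[t_0,1/3]$, only finitely many integers remain: $N=3,\dots,12$. For these I would compute $\Phi(1/N)$ directly from truncations of the series, with rigorous tail bounds. The terms decay like $k^{-1-4/N}$-type quantities, so I would accelerate the tail using the known asymptotics of $a_k$. As an independent check on the first few cases, $N=3,4$ have classical closed forms (equilateral triangle $\sqrt3 a^4/80$, square $\approx0.1406a^4$). An alternative is to bound $\Phi'$ by interval arithmetic on $[t_0,1/3]$.

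The main obstacle is obtaining fully explicit, rigorous constants in the remainder bound for the $\Phi'(t)$ expansion. The leading term is only of size $t^2$, while the series converges slowly for small $N$, so the tail control in the discrete check at $N=3,4,5$ also needs care.
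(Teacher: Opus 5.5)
Your architecture is logically sound. With \(\Phi(1/N)=T^D(P_N)\), proving \(\Phi'<0\) on \((0,t_0]\) and certifying \(T^D(P_3)<\dots<T^D(P_{1/t_0})\) does give the theorem. You are also right that the mixed-cell monotonicity cannot be used directly, because the prefactor \(2\pi\alpha\) decreases.

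The gap is that both load-bearing steps are only announced, not carried out.

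\emph{The small-\(t\) remainder.} On \((0,1/12]\) you need \(|E'(t)|\le Ct^4\) with an explicit \(C\) below roughly \(1.3\cdot 10^3\), since \(3\pi\zeta(3)t_0^2-\frac{4\pi^5}{45}t_0^3\approx 0.063\) at \(t_0=1/12\). Generic analyticity does not give this. For example, Cauchy's estimate \(|A_k|\le 3\cdot 4^k\) (from \(|\mathcal T|\le 3\) on \(|t|\le 1/4\)) bounds \(\sum_{k\ge5}k|A_k|t^{k-1}\) at \(t=1/12\) only by about \(1.2\), and it closes only for \(t\lesssim 1/40\).

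Your termwise alternative needs more care than you indicate. The expansion \((2t)_k/k!=\frac{2t}{k}\bigl(1+O(t\log k)\bigr)\) is not uniform in \(k\); the true factor is about \(k^{2t}/\Gamma(1+2t)\). So the weights \(k^{4t}\) and the \(\log^2 k\) losses must be carried through every sum, and the remainder must then be differentiated, before a usable \(C\) appears.

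\emph{The finite check.} For \(N=3,\dots,12\) you need certified values of \(T^D(P_N)\) to absolute accuracy well below the consecutive gaps, which near \(N=12\) are of order \(10^{-4}\). That is a computer-assisted verification, and you only sketch it.

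The paper removes both difficulties by using the positivity structure of the exact series instead of a Taylor remainder. Write \(\mathcal T=\frac14(M-\pi R)\) with \(R=S^{-2}Q\).

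\begin{enumerate}
\item For real \(t>0\), every \(d_m(t)=\frac{2t^2}{m(m+t)}\prod_{\ell<m}(1+2t/\ell)\) is positive and satisfies \(\frac{d}{dt}\log d_m\ge \frac2t-1\). Hence every summand of \(Q\) has logarithmic derivative at least \(\frac3t-2\).
\item This is combined with crude bounds: \(1\le S\le\frac{11}{10}\) and \(0<S'\le\frac{275}{6}t^2\) (both from \(d_m\le\frac52 t^2m^{-2+2t}\), via Wendel's inequality), and \(Q\ge\frac{10}{3}t^3\). Together these give \(R'(t)\ge\frac{5600}{1089}t^2\).
\item Elementary trigonometric bounds give \(M'(t)<16t^2\). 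Since \(\pi\cdot\frac{5600}{1089}>16\), it follows that \(\mathcal T'<0\) on the whole interval \((0,1/5]\).
\end{enumerate}

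Only \(N=3,4,5\) then remain. These are settled by the closed form \(T(P_3)=\sqrt3\pi^2/60\), by the rectangle series for \(T(P_4)\), which gives \(T(P_3)<\frac{7\pi^2}{240}<T(P_4)<\frac{7}{20}\), and by the lower bound \(T(P_5)>\frac7{20}\) from the series using \(S_5\ge1\).

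The paper's route thus buys a computer-free proof on a macroscopic interval. Your route, even if completed, would rest on delicate explicit remainders or on certified numerics over a dozen or more values of \(N\).

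Two smaller slips. The \(t^4\) coefficient is \(\pi^5/45\), not \(\pi^4/45\). Also, \(\sqrt3a^4/80\) and \(0.1406a^4\) are the engineering constants \(J=4T\). With \(-\Delta u=1\) and \(T=\int u\), the equilateral triangle gives \(\sqrt3a^4/320\) and the square about \(0.0351a^4\), so your consistency check would be off by a factor of \(4\).
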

	
	The proof of Theorem~\ref{thm:intro-polygon-monotonicity} is purely analytic.  For \(N\ge5\), the exact Schwarz--Christoffel--Bergman series is rewritten as a function \(\mathcal T(t)\), \(t=1/N\), and we prove directly that \(\mathcal T'(t)<0\) for \(0<t\le1/5\).  The remaining cases \(N=3,4,5\) are verified by explicit elementary estimates.  Unlike the proof of \cite{DahneGomezSerranoPechAlberich} for the Dirichlet eigenvalue and the proof of \cite{ChengGuiHuLiYaoSteklov} for the Steklov eigenvalue, the argument here does not use computer-assisted verification.
	
	Beyond torsion, the same mixed-cell method gives a complementary result for the first mixed Dirichlet--Neumann eigenvalue.  Let \(\mu(q)\) be the first eigenvalue of \(-\Delta\) on \(\Omega_q\), with Dirichlet condition on \(AC\) and Neumann condition on \(AB_q\cup B_qC\).
	
	\begin{theorem}[Normalized first mixed eigenvalue monotonicity]\label{monoeigenvalue}
		For every \(q>0\),
		\[
		\frac{d}{dq}\bigl(q\mu(q)\bigr)<0.
		\]
	\end{theorem}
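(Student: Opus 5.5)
The plan mirrors the torsion case (Theorem~\ref{thm:intro-mixed-normalized}): first get an exact formula for \(\frac{d}{dq}(q\mu(q))\) in terms of \(A=\int\varphi_x^2\) and \(B=\int\varphi_y^2\), then prove \(A>B\) by a Pohozaev identity plus a directional sign property of the eigenfunction.

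\textbf{Step 1: derivative formula.} I pull back to the fixed triangle \(\Omega_1\) by \(x=qs\), \(y=t\). This map sends \(AC\) to \(AC\) and the Neumann sides to Neumann sides, so the natural boundary conditions are preserved. After the Jacobian cancels,
\[
\mu(q)=\min_{\psi}\frac{\int_{\Omega_1}\bigl(q^{-2}\psi_s^2+\psi_t^2\bigr)}{\int_{\Omega_1}\psi^2},
\]
where the minimum is over \(\psi\in H^1(\Omega_1)\) with \(\psi=0\) on \(AC\). The first mixed eigenvalue is simple, with a positive eigenfunction, so \(\mu\) is differentiable and Hellmann--Feynman applies. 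Let \(\varphi=\varphi_q\) be the eigenfunction normalized by \(\int_{\Omega_q}\varphi^2=1\), and set \(A=\int_{\Omega_q}\varphi_x^2\), \(B=\int_{\Omega_q}\varphi_y^2\). Translating back gives \(\mu=A+B\) and \(\mu'(q)=-2A/q\). Hence
\[
\frac{d}{dq}\bigl(q\mu(q)\bigr)=\mu+q\mu'=B-A ,
\]
and the theorem is equivalent to \(A>B\).

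\textbf{Step 2: directional sign.} I will show \(\varphi_x>0\) and \(\varphi_y<0\) in \(\Omega_q\), following the proof of Proposition~\ref{thm:intro-mixed-gradient}. The derivatives \(w=\varphi_x\) and \(z=\varphi_y\) solve \(-\Delta w=\mu w\) and \(-\Delta z=\mu z\). On \(AB_q\), \(z=0\) and \(\partial_\nu w=0\). On the hypotenuse, the Neumann condition gives \(\varphi_x+q\varphi_y=0\), so \(w=-qz\). Differentiating tangentially and using the equation yields oblique conditions there. On \(AC\), \(\varphi_y=0\), and Hopf gives \(\varphi_x>0\).

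The difference from torsion is the zeroth-order term \(\mu w\), so the plain maximum principle is not available. I would apply it to the quotients \(w/\varphi\) and \(z/\varphi\), which satisfy an equation with no zeroth-order term, since \(\varphi>0\) in \(\Omega_q\cup AB_q\cup B_qC^{\circ}\). Alternatively, I would argue by contradiction: on a nodal domain of \(w^-\), the first mixed eigenvalue would be at most \(\mu\), contradicting strict domain monotonicity. Corner regularity at \(B_q\) and \(C\) also needs checking; this should go as for the torsion function.

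\textbf{Step 3: Pohozaev identity and sign (main obstacle).} Multiply \(-\Delta\varphi=\mu\varphi\) by \(y\varphi_y\) and integrate. The boundary terms on \(AC\) drop because \(\varphi=\varphi_y=0\) there, and those on \(AB_q\) drop because \(y=0\). What remains is
\[
\frac{A-B}{2}=-\frac{\mu}{2}+\frac12\int_{B_qC}y\Bigl(\mu\varphi^2-|\nabla\varphi|^2\Bigr)\nu_y\,d\sigma .
\]
This is weaker than the torsion case because of the \(-\mu/2\) term. So I would combine it with the companion identity from the multiplier \(x\varphi_x\), or with the identity from \((x-q,y)\cdot\nabla\varphi\), which is tangent to both Neumann sides. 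The aim is to eliminate \(\mu\) and \(\int\varphi^2\) and express \(A-B\) as a boundary integral, mainly on the hypotenuse, of products like \(y\,\varphi_x\varphi_y\) or \(x\,\varphi_y^2\).

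On the hypotenuse, \(\varphi_x=-q\varphi_y\), so \(|\nabla\varphi|^2=(1+q^2)\varphi_y^2\) and \(\varphi_x\varphi_y=-q\varphi_y^2\le0\). I would then show that the resulting integrand has a definite sign, using \(\varphi_x>0\) and \(\varphi_y<0\) from Step~2. If a pure boundary identity is not achievable, the fallback is to write \(A-B=-\int\nabla\varphi\cdot(\varphi_x,-\varphi_y)\)-type interior terms and show they are positive using the same signs.

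Finding the right combination of multipliers so that the \(\mu\)-terms cancel and the remainder has a provable sign is the step I expect to be hardest.
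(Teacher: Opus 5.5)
\textbf{Steps 1 and 2.} Your Step 1 is correct. The pullback to \(\Omega_1\) combined with Hellmann--Feynman gives \(\mu'(q)=-2A/q\) and \(\frac{d}{dq}(q\mu)=B-A\). This is a tidier route than the paper's Hadamard formula followed by the \(x\varphi_x\) identity.

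Step 2 is in the spirit of the paper's nodal-domain argument, but you still owe the sign of the hypotenuse term for a positive nodal set of \(\varphi_y\). Differentiating the Neumann condition tangentially gives \((-N+qT)\cdot\nabla\varphi_y=q\mu\varphi\) on \(B_qC\). Hence, with \(\Gamma=\partial D\cap B_qC\),
\[
\int_\Gamma\varphi_y\,N\cdot\nabla\varphi_y=-q\mu\int_\Gamma\varphi\,\varphi_y\le0,
\]
because the tangential part is an endpoint term that vanishes. This calculation, not ``strict domain monotonicity'', is what controls that boundary piece. Only \(\varphi>0\) and \(\varphi_y<0\) are used afterwards.

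\textbf{The gap: Step 3.} You leave Step 3 unresolved, and it looks hard only because of a sign error. The \(y\varphi_y\) multiplier actually gives
\[
\frac{B-A}{2}+\frac12\int_{B_qC}y|\nabla\varphi|^2\nu_y\,d\sigma=\mu\int_{\Omega_q}y\,\varphi\,\varphi_y .
\]
So your displayed formula is the identity for \((B-A)/2\), not \((A-B)/2\); the correct version has \(+\mu/2\).

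More importantly, the missing idea is simply not to integrate the right-hand side by parts. Keeping it as a bulk integral gives
\[
A-B=\int_{B_qC}y|\nabla\varphi|^2\nu_y\,d\sigma-2\mu\int_{\Omega_q}y\,\varphi\,\varphi_y .
\]
The boundary term is \(\ge0\), since \(y>0\) and \(\nu_y=q/\sqrt{1+q^2}>0\) on the hypotenuse. The bulk term is \(>0\), since \(y>0\), \(\varphi>0\) and \(\varphi_y<0\) in \(\Omega_q\). This is exactly the torsion argument with \(-2\int yu_y\) replaced by \(-2\mu\int y\varphi\varphi_y\), and it is how the paper concludes.

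Your proposed detour through further multipliers, to cancel the \(\mu\)-terms and reach a pure boundary identity, is not shown to work. After integrating by parts, the combination \(\mu-\mu\int_{B_qC}y\varphi^2\nu_y\) appears, and it has no evident sign. It equals \(-2\mu\int_{\Omega_q}y\varphi\varphi_y\), whose positivity is precisely what the directional sign \(\varphi_y<0\) supplies in bulk form. As written, the proposal therefore does not establish \(A>B\), which is the whole content of the theorem once Step 1 is done.
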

	
	Combining Theorem~\ref{thm:intro-mixed-normalized} and Theorem~\ref{monoeigenvalue}, we obtain the following angle monotonicity.
	
	\begin{corollary}\label{cor:isoceles-angle}
		Let \(I_\alpha\) be the isosceles triangle with vertex angle \(\alpha\) and area \(1\).  If homogeneous Neumann conditions are imposed on the two equal sides and a Dirichlet condition is imposed on the base, then the mixed torsional rigidity is strictly decreasing in \(\alpha\), while the first mixed Dirichlet--Neumann eigenvalue is strictly increasing in \(\alpha\).
	\end{corollary}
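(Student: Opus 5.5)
The plan is to identify the isosceles triangle \(I_\alpha\) with the doubled mixed cell \(\Omega_q\) by even reflection. Then both monotonicity statements follow from Theorem~\ref{thm:intro-mixed-normalized}, Theorem~\ref{monoeigenvalue}, and scaling, since \(q\) is a decreasing function of \(\alpha\).

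\emph{Reflection.} Reflect \(\Omega_q\) across its horizontal Neumann side \(AB_q\), i.e.\ the line \(y=0\). The union of \(\Omega_q\), its mirror image and the open segment \(AB_q\) is the triangle \(D_q\) with vertices \((0,1)\), \((0,-1)\) and \((q,0)\). In \(D_q\):
\begin{itemize}[label={}]
\item the base, on \(x=0\), has length \(2\) and is the doubled Dirichlet side \(AC\);
\item the two equal sides are \(B_qC\) and its mirror image, and both carry Neumann conditions;
\item the apex is \((q,0)\), with vertex angle \(\alpha=2\arctan(1/q)\).
\end{itemize}
The map \(q\mapsto \alpha\) is a strictly decreasing bijection from \((0,\infty)\) onto \((0,\pi)\), so every isosceles triangle arises this way up to similarity.

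\emph{Torsion.} The mixed torsion problem on \(D_q\) has a unique solution \(w\). The problem is invariant under \(y\mapsto -y\), so \(w\) is even in \(y\), and hence \(\partial_y w=0\) on \(y=0\). Therefore \(w|_{\Omega_q}\) solves \eqref{eq:intro-mixed-problem}, which gives \(w|_{\Omega_q}=u_q\). (Conversely, the even extension of \(u_q\) is a weak solution on \(D_q\), because the Neumann condition on \(AB_q\) removes any interface term.) Hence
\[
T(D_q)=2T(\Omega_q),\qquad |D_q|=q .
\]
To normalize, scale by \(s=q^{-1/2}\) so that the area becomes \(1\). Torsional rigidity scales as \(s^4\), so
\[
T(I_\alpha)=\frac{2T(\Omega_q)}{q^2}=2F(q).
\]
By Theorem~\ref{thm:intro-mixed-normalized}, \(F\) is strictly increasing in \(q\). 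Since \(q\) is strictly decreasing in \(\alpha\), the torsional rigidity \(T(I_\alpha)\) is strictly decreasing in \(\alpha\).

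\emph{Eigenvalue.} The first mixed eigenvalue \(\lambda(D_q)\) is simple, with a positive eigenfunction. Its reflection \(y\mapsto -y\) is again a positive eigenfunction with the same eigenvalue, so by simplicity the eigenfunction is even in \(y\). It therefore has zero normal derivative on \(y=0\), and its restriction to \(\Omega_q\) is a positive mixed eigenfunction there. A positive eigenfunction must be the first one, so \(\lambda(D_q)=\mu(q)\). Eigenvalues scale as \(s^{-2}=q\), which gives
\[
\lambda(I_\alpha)=q\,\mu(q).
\]
By Theorem~\ref{monoeigenvalue}, \(q\mu(q)\) is strictly decreasing in \(q\), so \(\lambda(I_\alpha)\) is strictly increasing in \(\alpha\).

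\emph{Main obstacle.} The only step needing care is the symmetry identification: that the solution on \(D_q\) is even, and that its restriction to \(\Omega_q\) satisfies the Neumann condition on \(AB_q\). This follows from uniqueness for the torsion problem and from simplicity of the first eigenvalue for the eigenvalue problem, as shown above. Everything else is bookkeeping with the scaling laws.
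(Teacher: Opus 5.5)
Your proposal is correct and follows essentially the same route as the paper: reflect across the Neumann leg to get $I_\alpha$ with $q=\cot(\alpha/2)$, use scaling to obtain $T(I_\alpha)=2T(\Omega_q)/q^2$ and $\lambda_1(I_\alpha)=q\mu(q)$, and then combine Theorems~\ref{thm:intro-mixed-normalized} and \ref{monoeigenvalue} with the fact that $q$ is strictly decreasing in $\alpha$. Your justification of the symmetry identification, via uniqueness for the torsion problem and simplicity of the first eigenvalue, is a step the paper leaves implicit.
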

	
	Indeed, after reflection one has \(q=\cot(\alpha/2)\); fixed-area normalization gives \(T(I_\alpha)=2T(\Omega_q)/q^2\) and \(\lambda_1(I_\alpha)=q\mu(q)\).  Since \(T(\Omega_q)/q^2\) is increasing in \(q\), while \(q\mu(q)\) is decreasing in \(q\), and since \(q\) is decreasing in \(\alpha\), the corollary follows.
	
	\vskip 0.3cm
	
	Finally, the maximum-principle mechanism behind Proposition~\ref{thm:intro-mixed-gradient} also gives a more general Dirichlet--Robin monotonicity principle.  We state it here as an auxiliary result at the end of the paper, separate from the main polygonal torsion line. This result extends the result in \cite{LWY}, and the proof is a direct maximum-principle argument and does not use the continuity method of \cite{LWY}.

	\vskip 0.3cm
	
	The paper is organized as follows.  Section~\ref{sec:mixed-continuity} proves Proposition~\ref{thm:intro-mixed-gradient}.  Section~\ref{sec:shape} proves Theorem~\ref{thm:intro-mixed-normalized} by combining a Hadamard shape derivative with a Pohozaev-type identity.  Section~\ref{sec:large-q} proves Theorem~\ref{thm:intro-triangle-asymptotics}.  Section~\ref{sec:polygons} derives the polygonal cell relation and the leading \(N^{-3}\) asymptotics.  Section~\ref{sec:analytic-N5} gives the exact Schwarz--Christoffel--Bergman series, completes the proof of Theorem~\ref{thm:intro-polygon-asymptotics}, and proves Theorem~\ref{thm:intro-polygon-monotonicity}.  Section~\ref{secmonoeigenvalue} proves Theorem~\ref{monoeigenvalue}.  Section~\ref{mixedrobinsection} proves the auxiliary Dirichlet--Robin monotonicity result.

	\section{A directional sign lemma for the mixed cell}
	\label{sec:mixed-continuity}

	We prove Proposition~\ref{thm:intro-mixed-gradient}.  To simplify notation,
	write
	\[
	\Omega = \{ (x,y) : x>0,\ y>0,\ x+qy<q \},
	\]
	where $q>0$.  The side $AC$ carries the Dirichlet condition, while
	$AB$ and $BC$ carry homogeneous Neumann conditions.
	
	Specifically, the boundary conditions can be explicitly written as:
	\begin{align*}
		u &= 0 \quad \text{on } AC, \\
		u_y &= 0 \quad \text{on } AB, \\
		u_x + q u_y &= 0 \quad \text{on } BC,
	\end{align*}
	where the unnormalized outward normal vector on the hypotenuse $BC$ is
	$N=(1,q)$.  We shall use the tangent vector $T=(q,-1)$, oriented from
	$C$ to $B$.
	
	We use the standard regularity theory for mixed boundary problems on
	polygons.  Since the two Dirichlet--Neumann junctions have opening angles at
	most $\pi/2$, the first derivatives of $u$ are continuous up to each open
	side and up to the junctions in the sense needed below; alternatively, the
	maximum-principle arguments may be carried out on corner-truncated triangles
	and then passed to the limit.  Thus the boundary traces used below are
	legitimate; see, for instance, Grisvard's regularity theory for polygonal
	domains~\cite{Grisvard}.
	
	\begin{proof}[Proof of Proposition~\ref{thm:intro-mixed-gradient}]
		
		\medskip\noindent\textbf{Step 1: Positivity of $u$.}
		Since $-\Delta u = 1 > 0$ in $\Omega$ and $u=0$ on the nonempty Dirichlet boundary $AC$, combined with the homogeneous Neumann conditions on the remaining boundaries, the maximum principle implies that $u \ge 0$ in $\Omega$. If $u$ vanishes at any interior point, the strong maximum principle would force $u \equiv 0$, which directly contradicts $-\Delta u = 1$. Thus, we have
		\[
		u > 0 \quad \text{in } \Omega.
		\]
		Consequently, on the Dirichlet boundary $AC$, where the outward unit normal vector is $\nu = (-1,0)$, Hopf's lemma implies that $\partial_\nu u < 0$ on the open segment $AC^\circ$. This translates to $-u_x < 0$, which yields
		\[
		u_x > 0 \quad \text{on } AC^\circ.
		\]
		
		\medskip\noindent\textbf{Step 2: Proof that $u_y<0$.}
		Let $w = u_y$. Differentiating the governing equation $-\Delta u = 1$ with respect to $y$, we find that $w$ is harmonic:
		\[
		\Delta w = 0 \quad \text{in } \Omega.
		\]
		We now examine the boundary conditions for $w$:
		\begin{itemize}
			\item On $AB$, the Neumann condition $u_y = 0$ immediately gives $w = 0$.
			\item On $AC$, since $u(0,y) = 0$, the tangential derivative along $AC$ must vanish, which implies $w = u_y = 0$.
			\item On the hypotenuse $BC$, we have $u_x + q u_y = 0$. We choose the tangent vector along $BC$ as $T = (q, -1)$. Differentiating the boundary condition along the tangential direction $T$ yields:
			\[
			T \cdot \nabla (u_x + q u_y) = 0 \implies q u_{xx} + (q^2 - 1)u_{xy} - q u_{yy} = 0.
			\]
			Utilizing the original PDE $u_{xx} + u_{yy} = -1$, we substitute $u_{xx} = -1 - u_{yy}$ into the expression above to obtain:
			\[
			(q^2 - 1)u_{xy} - 2q u_{yy} = q.
			\]
			Rewriting this in terms of $w$ (since $w_x = u_{xy}$ and $w_y = u_{yy}$), we get:
			\[
			(q^2 - 1)w_x - 2q w_y = q \quad \text{on } BC.
			\]
			Noting that the coefficient vector can be decomposed as $(q^2-1, -2q) = -N + qT$, this boundary relation can be compactly written as:
			\begin{equation}\label{eq:w_boundary}
				(-N + qT) \cdot \nabla w = q \quad \text{on } BC.
			\end{equation}
		\end{itemize}
		
		We proceed by contradiction. Suppose $w$ attains a positive maximum in $\overline{\Omega}$. Since $w$ is harmonic, the strong maximum principle dictates that this positive maximum cannot occur in the interior. Moreover, since $w = 0$ on $AB \cup AC$ and the endpoint traces are obtained by continuity, this positive maximum must be achieved at some point $P$ on the open edge $BC^\circ$. 
		
		At this boundary maximum point $P$, the tangential derivative must vanish, i.e., $T \cdot \nabla w(P) = 0$. Simultaneously, Hopf's lemma implies that the outward normal derivative must be strictly positive, i.e., $N \cdot \nabla w(P) > 0$. However, substituting $T \cdot \nabla w(P) = 0$ into \eqref{eq:w_boundary} yields:
		\[
		-N \cdot \nabla w(P) = q \implies N \cdot \nabla w(P) = -q < 0,
		\]
		which contradicts Hopf's lemma. Therefore, $w \le 0$ in $\Omega$. By the strong maximum principle, since $w$ cannot be identically zero due to the non-homogeneous boundary condition \eqref{eq:w_boundary}, we conclude that
		\[
		w = u_y < 0 \quad \text{in } \Omega.
		\]
		Furthermore, the same Hopf's lemma argument on the boundary ensures that $u_y = w < 0$ on $BC^\circ$.
		
		\medskip\noindent\textbf{Step 3: Proof that $u_x>0$.}
		Let $z = u_x$. Differentiating the main equation with respect to $x$ shows that $z$ is also harmonic:
		\[
		\Delta z = 0 \quad \text{in } \Omega.
		\]
		Let us verify the boundary behavior of $z$:
		\begin{itemize}
			\item On $AC^\circ$, Step 1 already established that $z = u_x > 0$.
			\item On $BC^\circ$, the Neumann boundary condition $u_x + q u_y = 0$ implies $z = -q u_y$. Since we proved $u_y < 0$ on $BC^\circ$ in Step 2 and $q > 0$, it follows that $z > 0$ on $BC^\circ$.
			\item On $AB$, differentiating $u_y = 0$ with respect to $x$ gives $u_{xy} = 0$, which means $z_y = 0$. Since the outward unit normal on $AB$ is $\nu = (0,-1)$, we have $\partial_\nu z = -z_y = 0$ on $AB$.
		\end{itemize}
		
		Assume for the sake of contradiction that $z$ attains a negative minimum in $\overline{\Omega}$. Since $z$ is harmonic, this minimum cannot be in the interior. It also cannot be located on $AC^\circ \cup BC^\circ$ because $z > 0$ there. Thus, using again the boundary traces at the endpoints, the negative minimum must be attained at some point on the open segment $AB^\circ$. However, at such a boundary minimum point, Hopf's lemma requires that the outward normal derivative satisfy $\partial_\nu z < 0$, which contradicts the established homogeneous Neumann condition $\partial_\nu z = 0$. 
		
		Consequently, we must have $z \ge 0$ in $\Omega$. Since $z$ is harmonic and strictly positive on $AC^\circ$, it cannot be identically zero. The strong maximum principle then guarantees that
		\[
		z = u_x > 0 \quad \text{in } \Omega.
		\]
		The same boundary-point argument also excludes zeros of $z$ on $AB^\circ$;
		otherwise $z$ would attain a nonnegative boundary minimum there while
		$\partial_\nu z=0$.
		
		Combining Step 2 and Step 3 gives $u_y<0$ and $u_x>0$ in the interior of
		$\Omega$.  The assertion about the maximum follows from these boundary
		monotonicities as well: on $AB$ the function increases in the $x$-direction,
		and on $BC$ the Neumann condition gives
		$u_x=-q u_y>0$, so the tangential derivative in the direction $T=(q,-1)$ is
		$T\cdot\nabla u=-(1+q^2)u_y>0$.  Hence $u$ increases along both Neumann
		sides toward $B=(q,0)$, while $u=0$ on $AC$.  Therefore the maximum on
		$\overline\Omega$ is attained only at $B$.
	\end{proof}

	\section{\texorpdfstring{Monotonicity of \(T(\Omega_q)/q^2\)}{Monotonicity of T(Omega q)/q2}}\label{sec:shape}

	For \(q>0\), set
	\[
	\Omega_q=\{(x,y):0<x<q,\ 0<y<1-x/q\}.
	\]
	The side \(x=0\) carries the Dirichlet condition, while the sides
	\(y=0\) and \(y=1-x/q\) carry Neumann conditions.  Let \(u=u_q\) solve
	\[
	\begin{cases}
		-\Delta u=1 &\text{in }\Omega_q,\\
		u=0 &\text{on }x=0,\\
		\partial_\nu u=0 &\text{on }y=0\text{ and }y=1-x/q.
	\end{cases}
	\]
	We write
	\[
	T(\Omega_q)=\int_{\Omega_q}u\,dx\,dy.
	\]
	\begin{proof}[Proof of Theorem~\ref{thm:intro-mixed-normalized}]
		Consider the deformation
		\[
		(x,y)\longmapsto \left(\frac{q+s}{q}x,y\right),
		\]
		which maps \(\Omega_q\) onto \(\Omega_{q+s}\).  Its velocity field at
		\(s=0\) is
		\[
		\eta(x,y)=\frac1q(x,0).
		\]
		The standard Hadamard formula (see for example \cite{HenrotPierre} or \cite{SokolowskiZolesio}) for the mixed torsion functional gives
		\[
		\frac{d}{dq}T(\Omega_q)
		=
		\int_{\partial\Omega_q}
		\mathcal G(u)\,\eta\cdot\nu\,ds,
		\]
		where
		\[
		\mathcal G(u)=|\nabla u|^2
		\quad\text{on the Dirichlet part},
		\]
		and
		\[
		\mathcal G(u)=2u-|\nabla u|^2
		\quad\text{on the Neumann part}.
		\]
		In the present deformation, the Dirichlet side \(x=0\) is fixed and
		\(\eta\cdot\nu=0\) on \(y=0\).  Hence only the slanted Neumann side
		contributes.  On
		\[
		y=1-x/q,
		\qquad 0<x<q,
		\]
		we have
		\[
		\nu=\frac{(1,q)}{\sqrt{1+q^2}},
		\qquad
		ds=\frac{\sqrt{1+q^2}}{q}\,dx,
		\]
		and therefore
		\[
		\eta\cdot\nu\,ds=\frac{x}{q^2}\,dx.
		\]
		Thus
		\begin{equation}
			\label{eq:Tprime-boundary}
			\frac{d}{dq}T(\Omega_q)
			=
			\frac1{q^2}\int_0^q
			x\left(2u-|\nabla u|^2\right)
			\left(x,1-\frac{x}{q}\right)dx .
		\end{equation}
		
		Set
		\[
		A=\int_{\Omega_q}u_x^2\,dx\,dy,
		\qquad
		B=\int_{\Omega_q}u_y^2\,dx\,dy.
		\]
		Since \(-\Delta u=1\), \(u=0\) on the Dirichlet side, and
		\(\partial_\nu u=0\) on the Neumann sides,
		\begin{equation}
			\label{eq:energy}
			T(\Omega_q)=A+B.
		\end{equation}
		
		Multiplying \(-\Delta u=1\) by \(xu_x\) and integrating by parts gives
		\[
		\int_{\Omega_q}xu_x
		=
		\frac12(A-B)
		+\frac12\int_{\partial\Omega_q}x|\nabla u|^2\nu_x\,ds.
		\]
		On the other hand, the divergence theorem gives
		\[
		\int_{\Omega_q}xu_x
		=
		\int_{\partial\Omega_q}xu\nu_x\,ds
		-T(\Omega_q).
		\]
		Only the slanted side contributes to the boundary integrals, and on that side
		\[
		\nu_x\,ds=\frac1q\,dx.
		\]
		Hence
		\[
		\frac1q\int_0^q
		x\left(2u-|\nabla u|^2\right)
		\left(x,1-\frac{x}{q}\right)dx
		=
		2T(\Omega_q)+A-B.
		\]
		Combining this with \eqref{eq:Tprime-boundary}, we obtain
		\begin{equation}
			\label{eq:Tprime-AB}
			\frac{d}{dq}T(\Omega_q)
			=
			\frac1q\left(2T(\Omega_q)+A-B\right).
		\end{equation}
		Therefore
		\begin{equation}
			\label{eq:normalized-derivative}
			\frac{d}{dq}\left(\frac{T(\Omega_q)}{q^2}\right)
			=
			\frac{A-B}{q^3}.
		\end{equation}
		
		It remains to prove \(A>B\).  Multiplying \(-\Delta u=1\) by \(yu_y\) and
		integrating by parts gives
		\[
		A-B
		=
		\int_{\partial\Omega_q}y|\nabla u|^2\nu_y\,ds
		-2\int_{\Omega_q}yu_y\,dx\,dy.
		\]
		The boundary integral is nonnegative: on \(y=0\) one has \(y=0\), on
		\(x=0\) one has \(\nu_y=0\), and on the slanted side one has
		\(y>0\) and \(\nu_y>0\).  By Proposition~\ref{thm:intro-mixed-gradient},
		\[
		u_y<0\qquad\text{in }\Omega_q.
		\]
		Consequently
		\[
		-2\int_{\Omega_q}yu_y\,dx\,dy>0,
		\]
		and hence
		\[
		A-B>0.
		\]
		By \eqref{eq:normalized-derivative},
		\[
		\frac{d}{dq}\left(\frac{T(\Omega_q)}{q^2}\right)>0.
		\]
		This proves the theorem.
	\end{proof}

	\section{\texorpdfstring{Large \(q\) asymptotics of \(T(\Omega_q)/q^2\)}{Large q asymptotics of T(Omega q)/q2}}\label{sec:large-q}
	
	Define
	\[
	F(q):=\frac{T(\Omega_q)}{q^2}.
	\]
	Let
	\[
	\alpha=\arctan\frac1q,
	\qquad
	q=\cot\alpha.
	\]
	After translating the vertex \(B=(q,0)\) to the origin, scaling by \(q^{-1}\),
	and reflecting across the horizontal Neumann side, the mixed triangle becomes the
	sector-like mixed domain
	\[
	S_\alpha
	=
	\{(r,\theta):|\theta|<\alpha,\ 0<r<\sec\theta\},
	\]
	with homogeneous Neumann conditions on \(\theta=\pm\alpha\) and a Dirichlet
	condition on \(r=\sec\theta\).  Since torsional rigidity scales like the
	fourth power of length and reflection doubles the rigidity,
	\begin{equation}\label{eq:F-sector-relation}
		F(\cot\alpha)
		=
		\frac{\cot^2\alpha}{2}T(S_\alpha).
	\end{equation}
	
	We compare \(S_\alpha\) with the unit mixed sector
	\[
	C_\alpha=\{(r,\theta):|\theta|<\alpha,\ 0<r<1\}.
	\]
	The torsion function in \(C_\alpha\) is \(u_0(r)=(1-r^2)/4\), and
	\(T(C_\alpha)=\alpha/8\).  Put
	\[
	h(\theta)=\sec\theta-1.
	\]
	We shall use the following sector perturbation estimate.
	
	\begin{lemma}\label{lem:sector-expansion}
		As \(\alpha\downarrow0\),
		\[
		T(S_\alpha)
		=
		\frac{\alpha}{8}+\frac{\alpha^3}{12}
		-
		\frac{\zeta(3)}{\pi^3}\alpha^4+O(\alpha^5).
		\]
	\end{lemma}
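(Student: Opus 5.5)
The plan is to write the torsion function of $S_\alpha$ as the explicit radial profile plus a harmonic correction. I will then express $T(S_\alpha)$ through a boundary integral on the Dirichlet edge $x=1$, and expand everything in the Neumann eigenmodes of the angular interval $(-\alpha,\alpha)$. The $\zeta(3)$ term should come from summing the contributions of the nonconstant modes.

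\textbf{Step 1 (decomposition).} Put $u=u_0+v$ with $u_0=(1-r^2)/4$. Then $v$ is harmonic in $S_\alpha$ and satisfies $\partial_\theta v=0$ on $\theta=\pm\alpha$, since $u_0$ is radial. On the Dirichlet edge $r=\sec\theta$ it has
\[
v=-u_0=\tfrac14(\sec^2\theta-1)=\tfrac14\tan^2\theta .
\]
Applying Green's formula to $u$ and $u_0$ (using $-\Delta u_0=1$, $u=0$ on the edge, and Neumann data for both on the rays) gives
\[
T(S_\alpha)=\int_{S_\alpha}u_0\,dx+\int_{\{x=1\}}u_0\,\partial_\nu u\,ds .
\]
The first term is explicit:
\[
\int_{-\alpha}^{\alpha}\Bigl(\tfrac18\sec^2\theta-\tfrac1{16}\sec^4\theta\Bigr)d\theta=\frac{\alpha}{16}+O(\alpha^3),
\]
and I will expand it to order $\alpha^5$. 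Everything then reduces to the normal flux of $v$ on the edge, since $\partial_\nu u_0$ is explicit there.

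\textbf{Step 2 (modal expansion).} Expand
\[
v=c_0+\sum_{k\ge1}c_k\,r^{\lambda_k}\cos(\lambda_k\theta),\qquad \lambda_k=\frac{k\pi}{\alpha}.
\]
Since $v$ is bounded at the apex, only the growing powers appear. The Dirichlet condition becomes the system
\[
\sum_k c_k\sec^{\lambda_k}\theta\,\cos(\lambda_k\theta)=\tfrac14\tan^2\theta .
\]
For small $\alpha$ we have $\lambda_k\theta^2\le k\pi\alpha$ on the edge. To leading order the factor $\sec^{\lambda_k}\theta\approx 1$, so the $c_k$ are the cosine coefficients of $\tfrac14\theta^2$ on $(-\alpha,\alpha)$:
\[
c_0\approx\frac{\alpha^2}{12},\qquad c_k\approx\frac{(-1)^k\alpha^2}{k^2\pi^2}.
\]
The constant mode, together with the $u_0$ terms, produces the polynomial part $\alpha/8+\alpha^3/12$. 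In particular the $\alpha/16$ from $\int_{S_\alpha}u_0\,dx$ combines with the leading flux term $\int_{\{x=1\}}u_0\,\partial_\nu u_0\,ds$ to give $\alpha/8$.

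For the oscillating modes, the flux $\partial_\nu v\sim\lambda_k c_k$ is paired against $u_0=-\tfrac14\tan^2\theta$, whose $k$-th cosine coefficient is again of size $\alpha^2/k^2$. A naive pairing gives terms of order $\alpha^4/k^3$ per mode, hence a total $\sum_k k^{-3}\alpha^4=\zeta(3)\alpha^4$ up to a factor $\pi^{-3}$ and a sign. This is where the coefficient $-\zeta(3)\alpha^4/\pi^3$ should come from. I will make the constant exact by taking the first correction from $\sec^{\lambda_k}\theta=\exp(\lambda_k\theta^2/2+\dots)$ into account, since it is of the same relative order. As a cross-check, I will compute the same quantity as a boundary layer: flatten the neighborhood of the edge to a strip of width $2\alpha$ and solve the Laplace problem there with Neumann sides by Fourier series in $y$. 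The sum $\sum(-1)^{2k}k^{-3}$ should reappear.

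\textbf{Step 3 (remainder).} This is the expected main obstacle: proving that all neglected contributions are $O(\alpha^5)$. They include the mode-coupling from $\sec^{\lambda_k}\theta\neq1$, the higher Taylor terms of $\tan^2\theta$ and $\sec^4\theta$, and the tails in $k$. I would first avoid inverting the infinite system exactly. Instead, I would build an explicit approximate solution $v_{\rm app}$ from the constant mode plus the first-order modal corrections, and compute its Dirichlet defect on the edge; this should be $O(\alpha^4)$ uniformly in $\theta$. I would then bound the error $v-v_{\rm app}$ by the maximum principle, which is available for this mixed problem with the Neumann corner of angle $2\alpha<\pi$. Finally, I would control the error in $T$ through the variational characterization $T=\int|\nabla u|^2$, whose error is quadratic in the gradient of $v-v_{\rm app}$ and should therefore be of order at most $\alpha^5$. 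If the flux pairing turns out to be delicate near the two corners $\theta=\pm\alpha$ of the edge, I would truncate the modal sum at $k\sim\alpha^{-1/2}$ and estimate the tail directly using the $k^{-3}$ decay.
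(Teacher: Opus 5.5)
\textbf{Step~1 bookkeeping.} Your reduction in Step~1 is the right one, but the bookkeeping is off, and correcting it shows exactly what must be proved.

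\begin{itemize}
\item The first term is $\int_{S_\alpha}u_0=\frac{\tan\alpha}{8}-\frac{\tan^3\alpha}{24}=\frac{\alpha}{8}+O(\alpha^5)$, not $\alpha/16$.
\item On the edge, $u_0=-y^2/4$ and $\partial_\nu u_0=-\tfrac12$. Hence $\int_{\{x=1\}}u_0\,\partial_\nu u_0\,ds=\frac{\tan^3\alpha}{12}$, and this is the source of $\alpha^3/12$. The constant mode $c_0$ carries no flux.
\item Since $v=-u_0$ on the edge, the remaining term is $\int_{\{x=1\}}u_0\,\partial_\nu v\,ds=-\int_{S_\alpha}|\nabla v|^2$.
\end{itemize}
Therefore, exactly,
\[
T(S_\alpha)=\frac{\tan\alpha}{8}+\frac{\tan^3\alpha}{24}-\int_{S_\alpha}|\nabla v|^2 ,
\]
and the lemma is equivalent to $\int_{S_\alpha}|\nabla v|^2=\zeta(3)\alpha^4/\pi^3+O(\alpha^5)$. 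Your naive pairing already gives this constant exactly: $\sum_{k\ge1}\alpha\lambda_k c_k^2=\alpha^4\pi^{-3}\zeta(3)$. In your flux formulation, the factors $\sec^{\lambda_k}\theta$ are a relative $O(k\alpha)$ effect for fixed $k$ and enter only at order $\alpha^5$. They are not ``of the same relative order''.

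\textbf{The gap: Step~3 does not close.} The $v_{\mathrm{app}}$ you describe, a finite sum of sector modes with perturbatively corrected cosine coefficients, cannot have a uniformly $O(\alpha^4)$ Dirichlet defect.

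\begin{itemize}
\item The even $2\alpha$-periodic extension of $\theta^2$ has a kink at $\theta=\pm\alpha$. So truncating at $K$ leaves a defect $\approx\alpha^2/(\pi^2K)$ at the corners, which forces $K\gtrsim\alpha^{-2}$.
\item But $\sec^{\lambda_k}\alpha\approx e^{k\pi\alpha/2}$. For $k\gg\alpha^{-1}$ the edge factors are exponentially large, and the expansion in $\lambda_k\theta^2$ is not perturbative. With the naive $c_k$, the series $\sum_k c_k\sec^{\lambda_k}\theta\cos(\lambda_k\theta)$ in fact diverges at every $\theta\neq0$.
\item With your fallback truncation $K\sim\alpha^{-1/2}$, the uniform defect is only $O(\alpha^{5/2})$. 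Even the best use of a sup bound, $|\int_{S_\alpha}(v-v_{\mathrm{app}})|\le|S_\alpha|\,\|v-v_{\mathrm{app}}\|_\infty$, then gives only $O(\alpha^{7/2})$.
\item A maximum-principle bound says nothing about $\int|\nabla(v-v_{\mathrm{app}})|^2$.
\item The variational error is quadratic only for admissible test functions, i.e.\ ones with exact Dirichlet data. Otherwise a linear cross term appears.
\end{itemize}

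\textbf{The missing idea.} What you need is an energy estimate on $S_\alpha$ itself, and the paper supplies one without inverting any modal system. Let $E_{S_\alpha}(g)$ and $E_{C_\alpha}(g)$ denote the minimal Dirichlet energies with data $g=\frac14\tan^2\theta$ on the outer side and free radial sides. The substitution $r=\rho\sec\theta$ turns the energy on $S_\alpha$ into
\[
\int_{C_\alpha}\Bigl\{\rho V_\rho^2+\rho^{-1}\bigl(V_\theta-\rho\tan\theta\,V_\rho\bigr)^2\Bigr\}\,d\rho\,d\theta .
\]
This is a $(1+O(\alpha))$-perturbation of the unit-sector form. Since $E_{C_\alpha}(g)=O(\alpha^4)$, it follows that $E_{S_\alpha}(g)=E_{C_\alpha}(g)\bigl(1+O(\alpha)\bigr)=E_{C_\alpha}(g)+O(\alpha^5)$.

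On $C_\alpha$, the energy of data with cosine coefficients $a_k$ is exactly $\sum_k\alpha\lambda_k a_k^2$ by Parseval. The scaling bound $E_{C_\alpha}(\alpha^jF(\theta/\alpha))=O(\alpha^{2j})$, for $F$ bounded in $H^{1/2}(-1,1)$, lets you replace $\tan^2\theta$ by $\theta^2$ at cost $O(\alpha^6)$. This makes your $\zeta(3)$ computation rigorous.
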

	
\begin{proof}
	Write
	\[
	R(\theta)=\sec\theta,
	\qquad h(\theta)=R(\theta)-1.
	\]
	Then, uniformly for \(|\theta|\le \alpha\),
	\[
	h(\theta)=\frac{\theta^2}{2}+O(\theta^4),
	\qquad h'(\theta)=\theta+O(\theta^3).
	\]
	In particular,
	\[
	\|h\|_{L^\infty(-\alpha,\alpha)}=O(\alpha^2),
	\qquad
	\int_{-\alpha}^{\alpha}h(\theta)^2\,d\theta=O(\alpha^5).
	\]
	
	Let
	\[
	u_0(r)=\frac{1-r^2}{4}.
	\]
	Then \(-\Delta u_0=1\), and \(u_0\) satisfies the homogeneous Neumann
	condition on the two radial sides. If \(u\) is the torsion function in
	\(S_\alpha\), set
	\[
	w=u-u_0.
	\]
	Then \(w\) is harmonic in \(S_\alpha\), satisfies homogeneous Neumann
	conditions on \(\theta=\pm\alpha\), and on the outer boundary
	\(r=R(\theta)\) it has boundary value
	\[
	w=g,
	\qquad
	g(\theta)=-u_0(R(\theta))
	=\frac{R(\theta)^2-1}{4}
	=\frac{h(\theta)}{2}+\frac{h(\theta)^2}{4}.
	\]
	
	We shall use a simple energy comparison. For boundary data \(f\) on the
	outer side, denote by \(E_{S_\alpha}(f)\) the Dirichlet energy of the
	harmonic function in \(S_\alpha\) with boundary value \(f\) on
	\(r=R(\theta)\) and homogeneous Neumann conditions on the radial sides.
	Similarly, let \(E_{C_\alpha}(f)\) be the corresponding energy in
	\[
	C_\alpha=\{(r,\theta):|\theta|<\alpha,\ 0<r<1\}.
	\]
	Let \(\Lambda_\alpha\) be the corresponding Dirichlet--to--Neumann
	operator on \(r=1\). Thus
	\[
	E_{C_\alpha}(f)=\langle f,\Lambda_\alpha f\rangle .
	\]
	Equivalently, if
	\[
	f(\theta)=a_0+\sum_{m=1}^{\infty}
	a_m\cos\left(\frac{m\pi\theta}{\alpha}\right)
	\]
	is even, then
	\[
	E_{C_\alpha}(f)
	=
	\sum_{m=1}^{\infty}
	\frac{m\pi}{\alpha}\,\alpha a_m^2 .
	\]
	In particular, if
	\[
	f_\alpha(\theta)=\alpha^k F_\alpha(\theta/\alpha)
	\]
	and \(F_\alpha\) remains bounded in \(H^{1/2}(-1,1)\), then
	\[
	E_{C_\alpha}(f_\alpha)=O(\alpha^{2k}).
	\]
	
	Under the change of variables
	\[
	r=R(\theta)\rho ,
	\]
	the energy in \(S_\alpha\) becomes, with
	\[
	a(\theta)=\frac{R'(\theta)}{R(\theta)}=\tan\theta ,
	\]
	\[
	\int_{C_\alpha}
	\left\{
	\rho V_\rho^2+
	\frac1\rho
	\left(V_\theta-\rho a(\theta)V_\rho\right)^2
	\right\}
	d\rho\,d\theta .
	\]
	Since
	\[
	\|a\|_{L^\infty(-\alpha,\alpha)}=O(\alpha),
	\]
	this quadratic form differs from the unit-sector energy
	\[
	\int_{C_\alpha}
	\left(
	\rho V_\rho^2+\frac1\rho V_\theta^2
	\right)
	d\rho\,d\theta
	\]
	by at most \(C\alpha\) times the unit-sector energy. Taking minima gives
	\[
	E_{S_\alpha}(f)
	=
	E_{C_\alpha}(f)+O\bigl(\alpha E_{C_\alpha}(f)\bigr).
	\]
	For \(f=g\), the preceding scaling estimate gives
	\[
	E_{C_\alpha}(g)=O(\alpha^4).
	\]
	Therefore
	\[
	E_{S_\alpha}(g)=E_{C_\alpha}(g)+O(\alpha^5).
	\]
	Moreover,
	\[
	g-\frac h2=\frac{h^2}{4}.
	\]
	Since the \(\Lambda_\alpha\)-energy of \(h^2\) is \(O(\alpha^8)\), the
	quadraticity of \(E_{C_\alpha}\) gives
	\[
	E_{C_\alpha}(g)
	=
	E_{C_\alpha}\left(\frac h2\right)+O(\alpha^6)
	=
	\frac14\langle h,\Lambda_\alpha h\rangle+O(\alpha^6).
	\]
	Consequently,
	\begin{equation}\label{eq:sector-energy-comparison}
		E_{S_\alpha}(g)
		=
		\frac14\langle h,\Lambda_\alpha h\rangle+O(\alpha^5).
	\end{equation}
	
	We next compute \(\int_{S_\alpha}w\,dA\). Since \(w\) is harmonic and
	\(-\Delta u_0=1\), Green's identity gives
	\[
	\int_{S_\alpha} w\,dA
	=
	-\int_{\partial S_\alpha} w\,\partial_\nu u_0\,ds
	-
	\int_{S_\alpha}|\nabla w|^2\,dA .
	\]
	The radial sides give no contribution. On \(r=R(\theta)\),
	\[
	-\partial_\nu u_0\,ds
	=
	\frac{R(\theta)^2}{2}\,d\theta .
	\]
	Therefore
	\[
	\int_{S_\alpha} w\,dA
	=
	\frac12\int_{-\alpha}^{\alpha}g(\theta)R(\theta)^2\,d\theta
	-
	E_{S_\alpha}(g).
	\]
	Because
	\[
	g=\frac h2+O(h^2),
	\qquad
	R^2=1+O(h),
	\qquad
	\int_{-\alpha}^{\alpha}h^2\,d\theta=O(\alpha^5),
	\]
	we obtain
	\[
	\frac12\int_{-\alpha}^{\alpha}g(\theta)R(\theta)^2\,d\theta
	=
	\frac14\int_{-\alpha}^{\alpha}h(\theta)\,d\theta+O(\alpha^5).
	\]
	Using \eqref{eq:sector-energy-comparison},
	\begin{equation}\label{eq:int-w-sector}
		\int_{S_\alpha} w\,dA
		=
		\frac14\int_{-\alpha}^{\alpha}h(\theta)\,d\theta
		-
		\frac14\langle h,\Lambda_\alpha h\rangle
		+
		O(\alpha^5).
	\end{equation}
	
	Also,
	\[
	\int_{S_\alpha}u_0\,dA
	=
	\int_{-\alpha}^{\alpha}\int_0^{R(\theta)}
	\frac{1-r^2}{4}\,r\,dr\,d\theta
	=
	\frac{\alpha}{8}+O(\alpha^5).
	\]
	Combining this with \eqref{eq:int-w-sector} yields
	\begin{equation}\label{eq:sector-pre-final}
		T(S_\alpha)
		=
		\frac{\alpha}{8}
		+
		\frac14\int_{-\alpha}^{\alpha}h(\theta)\,d\theta
		-
		\frac14\langle h,\Lambda_\alpha h\rangle
		+
		O(\alpha^5).
	\end{equation}
	
	The first one-dimensional term is
	\[
	\frac14\int_{-\alpha}^{\alpha}h(\theta)\,d\theta
	=
	\frac14\int_{-\alpha}^{\alpha}
	\left(\frac{\theta^2}{2}+O(\theta^4)\right)d\theta
	=
	\frac{\alpha^3}{12}+O(\alpha^5).
	\]
	
	It remains to evaluate the Dirichlet--to--Neumann term. Since
	\[
	h(\theta)-\frac{\theta^2}{2}=O(\theta^4),
	\]
	the same energy scaling estimate gives
	\[
	\langle h,\Lambda_\alpha h\rangle
	=
	\left\langle \frac{\theta^2}{2},
	\Lambda_\alpha\left(\frac{\theta^2}{2}\right)\right\rangle
	+
	O(\alpha^6).
	\]
	On even functions,
	\[
	\Lambda_\alpha
	\cos\left(\frac{m\pi\theta}{\alpha}\right)
	=
	\frac{m\pi}{\alpha}
	\cos\left(\frac{m\pi\theta}{\alpha}\right),
	\qquad m\ge 1.
	\]
	Furthermore,
	\[
	\frac{\theta^2}{2}
	=
	\alpha^2
	\left[
	\frac16+\frac{2}{\pi^2}
	\sum_{m=1}^\infty
	\frac{(-1)^m}{m^2}
	\cos\left(\frac{m\pi\theta}{\alpha}\right)
	\right].
	\]
	The constant mode is killed by \(\Lambda_\alpha\). Hence Parseval's identity gives
	\[
	\begin{aligned}
		\left\langle \frac{\theta^2}{2},
		\Lambda_\alpha\left(\frac{\theta^2}{2}\right)\right\rangle
		&=
		\sum_{m=1}^\infty
		\frac{m\pi}{\alpha}\,\alpha
		\left(\frac{2\alpha^2}{\pi^2m^2}\right)^2  \\
		&=
		\frac{4\zeta(3)}{\pi^3}\alpha^4.
	\end{aligned}
	\]
	Therefore
	\[
	\langle h,\Lambda_\alpha h\rangle
	=
	\frac{4\zeta(3)}{\pi^3}\alpha^4+O(\alpha^6).
	\]
	Substituting this and the estimate for \(\int h\) into
	\eqref{eq:sector-pre-final}, we obtain
	\[
	T(S_\alpha)
	=
	\frac{\alpha}{8}
	+
	\frac{\alpha^3}{12}
	-
	\frac{\zeta(3)}{\pi^3}\alpha^4
	+
	O(\alpha^5).
	\]
	This proves the lemma.
\end{proof}
	
	Combining Lemma~\ref{lem:sector-expansion} with \eqref{eq:F-sector-relation}
	and the expansion \(\cot^2\alpha=\alpha^{-2}-2/3+O(\alpha^2)\), we obtain
	\begin{equation}\label{eq:F-alpha-expansion}
		F(\cot\alpha)
		=
		\frac{1}{16\alpha}
		-
		\frac{\zeta(3)}{2\pi^3}\alpha^2
		+
		O(\alpha^3).
	\end{equation}

	The preceding calculation is enough to obtain the leading large-\(q\)
	behavior of the original triangular problem.
	
	\begin{proof}[Proof of Theorem~\ref{thm:intro-triangle-asymptotics}]
		The sector calculation gives
		\[
		F(\cot\alpha)
		=
		\frac{1}{16\alpha}
		-
		\frac{\zeta(3)}{2\pi^3}\alpha^2
		+
		O(\alpha^3)
		\qquad (\alpha\downarrow0).
		\]
		Since \(\alpha=\arctan(1/q)\), we have
		\[
		\frac1\alpha=q+\frac1{3q}+O(q^{-3}),
		\qquad
		\alpha^2=q^{-2}+O(q^{-4}).
		\]
		Substitution yields the stated expansion for \(F(q)=T(\Omega_q)/q^2\),
		and multiplying by \(q^2\) gives the equivalent expansion for
		\(T(\Omega_q)\).
	\end{proof}
	
	\section{\texorpdfstring{Regular polygons and the cell reduction}{Regular polygons and the cell reduction}}\label{sec:polygons}
	
	Let \(P_N\) denote the regular \(N\)-gon with area \(\pi\).  Put
	\[
	\alpha=\frac{\pi}{N},
	\qquad
	q=\cot\alpha.
	\]
	The regular \(N\)-gon with side length \(2\) decomposes into \(2N\) copies
	of the triangle \(\Omega_q\).  Its area is \(Nq\).  Scaling this polygon to
	area \(\pi\) gives the exact relation
	\begin{equation}\label{eq:polygon-cell-relation}
		T(P_N)
		=
		2\pi\alpha\,F(\cot\alpha),
		\qquad
		\alpha=\frac{\pi}{N}.
	\end{equation}
	This relation is the geometric reason why the mixed problem on
	\(\Omega_q\) enters the regular-polygon torsion problem.
	
	\begin{proposition}[Cell reduction and leading polygonal asymptotics]\label{prop:cell-leading}
		As \(N\to\infty\),
		\[
		T(P_N)
		=
		\frac{\pi}{8}
		-
		\frac{\pi\zeta(3)}{N^3}
		+
		O(N^{-4}).
		\]
	\end{proposition}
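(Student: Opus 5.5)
The plan is to combine the exact cell relation \eqref{eq:polygon-cell-relation} with the small-angle expansion \eqref{eq:F-alpha-expansion}, which was derived from Lemma~\ref{lem:sector-expansion}. First I will verify \eqref{eq:polygon-cell-relation} itself. The regular \(N\)-gon with side length \(2\) has apothem \(\cot\alpha=q\). Joining the center to the vertices and to the edge midpoints cuts it into \(2N\) right triangles with legs \(1\) (half-side) and \(q\) (apothem). Each such triangle, placed with the edge midpoint at \(A\), the half-side along \(AC\), and the apothem along \(AB_q\), is congruent to \(\Omega_q\). In this picture the half-side \(AC\) lies on \(\partial P_N\), where the Dirichlet condition holds. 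The apothem and the segment from the center to a vertex lie on symmetry lines of \(P_N\). The Dirichlet torsion function of \(P_N\) is invariant under the dihedral group, so its normal derivative vanishes on these lines. By uniqueness, its restriction to each cell is \(u_q\).

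It follows that the torsional rigidity of the side-\(2\) polygon is \(2N\,T(\Omega_q)\), and its area is \(2N\cdot q/2=Nq\). Scaling lengths by \(\lambda\) with \(\lambda^2Nq=\pi\) multiplies torsional rigidity by \(\lambda^4=\pi^2/(N^2q^2)\). This gives
\[
T(P_N)=\frac{2\pi^2}{N}\frac{T(\Omega_q)}{q^2}=2\pi\alpha F(\cot\alpha),
\]
which is \eqref{eq:polygon-cell-relation}.

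Next I substitute \eqref{eq:F-alpha-expansion} with \(\alpha=\pi/N\):
\[
2\pi\alpha\Bigl(\frac{1}{16\alpha}-\frac{\zeta(3)}{2\pi^3}\alpha^2+O(\alpha^3)\Bigr)
=\frac{\pi}{8}-\frac{\zeta(3)}{\pi^2}\alpha^3+O(\alpha^4).
\]
Since \(\alpha^3=\pi^3/N^3\), the middle term equals \(-\pi\zeta(3)/N^3\), and the remainder is \(O(N^{-4})\). This is the claimed expansion.

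The one point needing care is that \eqref{eq:F-alpha-expansion} really has no \(\alpha^0\) or \(\alpha^1\) terms. This is the step I would check most carefully, since it is exactly what makes the \(N^{-1}\) and \(N^{-2}\) terms vanish. Starting from \(F=\tfrac12\cot^2\alpha\,T(S_\alpha)\), I expand
\[
\tfrac12\bigl(\alpha^{-2}-\tfrac23+O(\alpha^2)\bigr)\Bigl(\tfrac{\alpha}{8}+\tfrac{\alpha^3}{12}-\tfrac{\zeta(3)}{\pi^3}\alpha^4+O(\alpha^5)\Bigr).
\]
The \(\alpha\)-coefficient is \(\tfrac12\bigl(\tfrac1{12}-\tfrac{2}{3}\cdot\tfrac18\bigr)=0\), confirming the cancellation. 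The remaining error terms are \(O(\alpha^3)\), and after multiplication by \(2\pi\alpha\) they contribute only \(O(N^{-4})\).
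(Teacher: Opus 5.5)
Your proposal is correct and follows the same route as the paper: substitute the small-angle expansion \eqref{eq:F-alpha-expansion} into the exact cell relation \eqref{eq:polygon-cell-relation} with \(\alpha=\pi/N\). Your symmetry argument for the \(2N\)-cell decomposition and your explicit check that the \(\alpha^{1}\)-coefficient of \(F(\cot\alpha)\) cancels (\(\tfrac1{12}-\tfrac23\cdot\tfrac18=0\)) fill in details that the paper only asserts.
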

	
	\begin{proof}
		Using \eqref{eq:polygon-cell-relation} and the expansion obtained in
		Section~\ref{sec:large-q}, namely
		\[
		F(\cot\alpha)
		=
		\frac{1}{16\alpha}
		-
		\frac{\zeta(3)}{2\pi^3}\alpha^2
		+
		O(\alpha^3),
		\qquad \alpha\downarrow0,
		\]
		we obtain
		\[
		T(P_N)
		=
		\frac{\pi}{8}
		-
		\frac{\zeta(3)}{\pi^2}\alpha^3
		+
		O(\alpha^4)
		=
		\frac{\pi}{8}
		-
		\frac{\pi\zeta(3)}{N^3}
		+
		O(N^{-4}),
		\]
		because \(\alpha=\pi/N\).  This proves the proposition.
	\end{proof}
	
	\begin{remark}
		Proposition~\ref{prop:cell-leading} is the part of the polygonal asymptotics
		which comes directly from the mixed triangle.  It does not determine the
		coefficient of \(N^{-4}\).  Section~\ref{sec:analytic-N5} is therefore not
		merely a second derivation of the same result.  Its two additional functions
		are to compute the coefficient \(\pi^5/45\) and to provide direct
		derivative estimates for the exact analytic series.  These estimates are
		what make the effective monotonicity criterion in
		Theorem~\ref{thm:intro-polygon-monotonicity} possible.
	\end{remark}
	
	\begin{remark}
		The disk of area \(\pi\) has torsional rigidity \(\pi/8\).  Thus the leading
		term
		\[
		T(P_N)=\frac{\pi}{8}-\frac{\pi\zeta(3)}{N^3}+O(N^{-4})
		\]
		is consistent with the Saint-Venant inequality and implies asymptotic
		monotonicity once a sufficiently sharp remainder estimate is available.
	\end{remark}

	\section{\texorpdfstring{Schwarz--Christoffel--Bergman series and monotonicity of \(T^D(P_N)\)}{Schwarz--Christoffel--Bergman series and monotonicity of TD(PN)}}\label{sec:analytic-N5}
	
	Recall that \(P_N\) denotes the regular \(N\)-gon normalized to have area \(\pi\), and
	let \(T(P_N)\) denote its torsional rigidity:
	\[
	T(P_N)=\int_{P_N}u_N\,dA,
	\qquad
	-\Delta u_N=1\ \text{in }P_N,\quad
	u_N=0\ \text{on }\partial P_N.
	\]
	This section has a different role from the mixed-cell asymptotics in
	Section~\ref{sec:polygons}.  The latter gives a geometric reduction and the
	leading \(N^{-3}\) correction from the triangular problem.  Here we derive an
	exact Schwarz--Christoffel--Bergman series for the regular polygon.  This
	series is used for two purposes which the sector perturbation argument does
	not provide: it identifies the \(N^{-4}\) coefficient in
	Theorem~\ref{thm:intro-polygon-asymptotics}, and it gives an exact analytic
	function of \(t=1/N\).  The proof of
	Theorem~\ref{thm:intro-polygon-monotonicity} then estimates this analytic
	function directly: we prove that \(\cT'(t)<0\) for \(0<t\le1/5\) by elementary
	bounds on the series defining \(S(t)\), \(Q(t)\), and \(M(t)\). 
	
	The equality between torsional rigidity and Bergman analytic content for
	simply connected planar domains was developed by Fleeman--Simanek
	\cite{FleemanSimanek}.  Combined with the classical Schwarz--Christoffel
	representation of regular polygons \cite{DriscollTrefethen,Nehari}, it gives
	a convenient normalized series for \(T(P_N)\).  Related conformal-mapping
	methods for torsion are classical; see, for example,
	\cite{PolyaSzego,Sokolnikoff,FleemanSimanek}.
	
	\subsection{The exact Schwarz--Christoffel--Bergman series}
	
	The purpose of this subsection is to extract from the classical
	Schwarz--Christoffel--Bergman framework an exact series for \(T(P_N)\).
	The Bergman analytic-content identity and its use for computing
	torsional rigidity through conformal maps are due to
	Fleeman--Simanek; see in particular
	\cite[Section 3.3, Theorem 3.5, and formula (8)]{FleemanSimanek}.
	We shall also use the standard conformal covariance of Bergman spaces
	and Bergman projections \cite[Chapter 1]{HedenmalmKorenblumZhu}, the
	monomial Bergman projection formula on the disk
	\cite[Chapter 4]{ZhuBergman}, and the Schwarz--Christoffel
	representation of regular polygons \cite[Chapter 2]{DriscollTrefethen}.
	
	Let \(\Omega\subset\mathbb C\) be a bounded simply connected domain.
	We write
	\[
	A^2(\Omega)
	:=
	\left\{
	g:\Omega\to\mathbb C:
	g \text{ is holomorphic and }
	\int_\Omega |g|^2\,dA<\infty
	\right\}
	\]
	for the Bergman space of \(\Omega\).  If \(P_\Omega\) denotes the
	Bergman projection from \(L^2(\Omega)\) onto \(A^2(\Omega)\), the
	Bergman analytic-content formula gives
	\[
	T(\Omega)
	=
	\frac14
	\inf_{g\in A^2(\Omega)}
	\int_\Omega |\bar z-g(z)|^2\,dA(z).
	\]
	Equivalently,
	\[
	4T(\Omega)
	=
	\|\bar z-P_\Omega(\bar z)\|_{L^2(\Omega)}^2
	=
	\int_\Omega |z|^2\,dA
	-
	\|P_\Omega(\bar z)\|_{L^2(\Omega)}^2 .
	\]
	Thus, for \(\Omega=P_N\), the computation of \(T(P_N)\) reduces to
	computing the Bergman projection of \(\bar z\).
	
	Let
	\[
	\alpha=\frac{\pi}{N}.
	\]
	A Schwarz--Christoffel map from the unit disk \(\mathbb D\) onto a
	regular \(N\)-gon centered at the origin has the form
	\[
	f_N(z)
	=
	C_N\int_0^z(1-w^N)^{-2/N}\,dw,
	\]
	where \(C_N>0\) is chosen so that the image polygon has area \(\pi\).
	Using
	\[
	(1-w^N)^{-2/N}
	=
	\sum_{m\ge0} b_m w^{mN},
	\qquad
	b_m=\frac{(2/N)_m}{m!},
	\]
	where
	\[
	(a)_m=a(a+1)\cdots(a+m-1),\qquad (a)_0=1,
	\]
	we obtain
	\[
	f_N(z)
	=
	C_N\sum_{m\ge0}d_m z^{mN+1},
	\qquad
	d_m
	=
	\frac{(2/N)_m}{m!(mN+1)} .
	\]
	In particular,
	\[
	f_N'(z)
	=
	C_N
	\sum_{m\ge0}(mN+1)d_m z^{mN}.
	\]
	
	The area normalization determines \(C_N\).  Since
	\[
	|P_N|
	=
	\int_{\mathbb D}|f_N'(z)|^2\,dA(z),
	\]
	orthogonality of the monomials on \(\mathbb D\) gives
	\[
	|P_N|
	=
	\pi C_N^2
	\sum_{m\ge0}(mN+1)d_m^2 .
	\]
	Set
	\[
	S_N
	:=
	\sum_{m\ge0}(mN+1)d_m^2 .
	\]
	Because \(|P_N|=\pi\), we have
	\[
	C_N^2=S_N^{-1}.
	\]
	
	We now compute the projection term.  Define
	\[
	U:L^2(P_N)\to L^2(\mathbb D),
	\qquad
	(UH)(z)=H(f_N(z))f_N'(z).
	\]
	Then \(U\) is unitary, and it maps \(A^2(P_N)\) onto \(A^2(\mathbb D)\).
	Consequently the Bergman projections satisfy
	\[
	U P_{P_N}=P_{\mathbb D}U .
	\]
	Applying this identity to \(H(w)=\bar w\), we obtain
	\[
	U(\bar w)(z)=\overline{f_N(z)}\,f_N'(z),
	\]
	and hence
	\[
	\|P_{P_N}(\bar w)\|_{L^2(P_N)}^2
	=
	\left\|
	P_{\mathbb D}\big(\overline{f_N}f_N'\big)
	\right\|_{L^2(\mathbb D)}^2 .
	\]
	
	Using the series for \(f_N\) and \(f_N'\), we have
	\[
	\overline{f_N(z)}\,f_N'(z)
	=
	C_N^2
	\sum_{m,n\ge0}
	(nN+1)d_n d_m\,
	z^{nN}\bar z^{mN+1}.
	\]
	On the unit disk, the Bergman projection of a mixed monomial is
	\[
	P_{\mathbb D}(z^p\bar z^q)
	=
	\begin{cases}
		\displaystyle
		\frac{p-q+1}{p+1}z^{p-q}, & p\ge q,\\[1ex]
		0, & p<q .
	\end{cases}
	\]
	Taking \(p=nN\) and \(q=mN+1\), the nonzero terms occur precisely when
	\(n=m+j\) with \(j\ge1\).  In that case
	\[
	p-q=jN-1,
	\qquad
	p-q+1=jN,
	\]
	and therefore
	\[
	\frac{p-q+1}{p+1}(nN+1)=jN.
	\]
	It follows that
	\[
	P_{\mathbb D}\big(\overline{f_N}f_N'\big)
	=
	C_N^2
	\sum_{j\ge1}
	jN
	\left(\sum_{m\ge0}d_m d_{m+j}\right)
	z^{jN-1}.
	\]
	Since
	\[
	\|z^{jN-1}\|_{L^2(\mathbb D)}^2
	=
	\frac{\pi}{jN},
	\]
	we obtain
	\[
	\begin{aligned}
		\left\|
		P_{\mathbb D}\big(\overline{f_N}f_N'\big)
		\right\|_{L^2(\mathbb D)}^2
		&=
		\pi C_N^4
		\sum_{j\ge1}
		jN
		\left(\sum_{m\ge0}d_m d_{m+j}\right)^2 .
	\end{aligned}
	\]
	Define
	\[
	Q_N
	:=
	\sum_{j\ge1}
	jN
	\left(\sum_{m\ge0}d_m d_{m+j}\right)^2 .
	\]
	Using \(C_N^2=S_N^{-1}\), the projection term is
	\[
	\|P_{P_N}(\bar z)\|_{L^2(P_N)}^2
	=
	\pi S_N^{-2}Q_N .
	\]
	
	It remains to compute the moment
	\[
	M_N:=\int_{P_N}|z|^2\,dA .
	\]
	Let \(\rho\) be the inradius of the area-normalized regular \(N\)-gon.
	The polygon is the union of \(N\) congruent sectors
	\[
	\{(r,\theta):|\theta|<\alpha,\ 0<r<\rho\sec\theta\}.
	\]
	Hence
	\[
	\pi
	=
	|P_N|
	=
	N\int_{-\alpha}^{\alpha}
	\int_0^{\rho\sec\theta} r\,dr\,d\theta
	=
	N\rho^2\tan\alpha.
	\]
	Since \(\alpha=\pi/N\), this gives
	\[
	\rho^2=\frac{\pi}{N\tan\alpha}=\alpha\cot\alpha.
	\]
	Therefore
	\[
	\begin{aligned}
		M_N
		&=
		N\int_{-\alpha}^{\alpha}
		\int_0^{\rho\sec\theta} r^3\,dr\,d\theta  \\
		&=
		\frac{N\rho^4}{4}
		\int_{-\alpha}^{\alpha}\sec^4\theta\,d\theta  \\
		&=
		\frac{N\rho^4}{2}
		\left(\tan\alpha+\frac{\tan^3\alpha}{3}\right)  \\
		&=
		\frac{\pi}{2}
		\left[
		\alpha\cot\alpha+\frac{\alpha\tan\alpha}{3}
		\right].
	\end{aligned}
	\]
	
	Combining the Bergman analytic-content identity with the preceding
	projection computation yields the exact formula
	\begin{align}
		\label{eq:exactT}
		T(P_N)
		=
		\frac14
		\left(M_N-\pi S_N^{-2}Q_N\right),
	\end{align}
	where
	\[
	S_N
	=
	\sum_{m\ge0}(mN+1)d_m^2,
	\qquad
	Q_N
	=
	\sum_{j\ge1}
	jN
	\left(\sum_{m\ge0}d_m d_{m+j}\right)^2 .
	\]

	\subsection{Analytic boundedness and Cauchy coefficient estimates}
	
	We now rewrite the exact formula in the variable
	\[
	t=\frac1N.
	\]
	For complex \(t\) near zero, set
	\[
	d_m(t)
	=
	\frac{(2t)_m}{m!}\frac{t}{m+t},
	\qquad
	d_0(t)=1.
	\]
	We define \(M(t)\), \(S(t)\), and \(Q(t)\) by the series below and then set
	\[
	\cT(t)
	:=
	\frac14
	\left[
	M(t)
	-
	\pi S(t)^{-2}Q(t)
	\right].
	\]
	Thus \(\cT\) is initially defined by the right-hand side of the exact series, not by a
	regular polygon with noninteger number of sides.  When \(t=1/N\), \(N\ge3\) is an
	integer, the exact formula \eqref{eq:exactT} gives
	\[
	\cT(1/N)=T(P_N).
	\]
	Here
	\[
	M(t)
	=
	\frac{\pi}{2}
	\left[
	\pi t\cot(\pi t)
	+
	\frac{\pi t\tan(\pi t)}{3}
	\right],
	\]
	\[
	S(t)
	=
	\sum_{m\ge0}
	\frac{m+t}{t}d_m(t)^2,
	\]
	and
	\[
	Q(t)
	=
	\sum_{j\ge1}
	\frac{j}{t}
	\left(
	\sum_{m\ge0}d_m(t)d_{m+j}(t)
	\right)^2.
	\]
	
	\begin{lemma}\label{lem:analytic-bound}
		The function \(\cT(t)\) is holomorphic in \(|t|<1/4\) and continuous on
		\(|t|\le1/4\), after removing the removable singularity at \(t=0\).  Moreover,
		\[
		|\cT(t)|\le3,
		\qquad |t|\le \frac14 .
		\]
	\end{lemma}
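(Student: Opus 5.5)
The proof is a term-by-term analysis of the three ingredients \(M\), \(S\), \(Q\) of \(\cT=\frac14[M-\pi S^{-2}Q]\) on the closed disk \(|t|\le 1/4\). The plan is to show that \(M\) is holomorphic there and that \(S,Q\) are uniformly convergent series of holomorphic functions with explicit majorants. We then bound \(|S|\) from below away from zero and \(|Q|\) from above, and finally add up crude numerical bounds to reach \(|\cT|\le 3\).

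\textbf{Step 1: the moment term.} \(M(t)=\frac{\pi}{2}[\pi t\cot(\pi t)+\frac13\pi t\tan(\pi t)]\). Here \(\pi t\cot(\pi t)\) has a removable singularity at \(0\), with next poles at \(t=\pm1\). The function \(\tan(\pi t)\) has poles at \(t=\pm1/2\). So \(M\) is holomorphic on \(|t|<1/2\), which contains \(|t|\le1/4\). For the bound we use the Taylor series \(\pi t\cot \pi t=1-2\sum_{k\ge1}\zeta(2k)t^{2k}\) and the analogous positive-coefficient series for \(\pi t\tan\pi t\). Since the coefficients have fixed sign, the maximum modulus on \(|t|=1/4\) is controlled by the value at \(t=\pm i/4\) or at \(t=1/4\). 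This gives an explicit bound \(|M(t)|\le M_*\), roughly \(\frac{\pi}{2}(1.3+0.4)\), which is below \(3\) with room to spare.

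\textbf{Step 2: coefficient bounds and the series \(S,Q\).} For \(m\ge1\) write \(d_m(t)=\frac{(2t)_m}{m!}\frac{t}{m+t}\). With \(|t|\le1/4\) we have \(|(2t)_m|\le(2|t|)_m\le (1/2)_m\) and \(|m+t|\ge m-1/4\). Hence
\[
|d_m(t)|\le \frac{(1/2)_m}{m!}\,\frac{|t|}{m-1/4}\le \frac{C|t|}{m^{3/2}},
\]
using \((1/2)_m/m!\sim (\pi m)^{-1/2}\). The factors \(1/t\) in \(S\) and \(Q\) then cancel against the factor \(t\) carried by each \(d_m\), \(m\ge1\).

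\emph{The series \(S\).} The \(m=0\) term is exactly \(1\). For \(m\ge1\) the terms are \(\frac{m+t}{t}d_m^2=O(|t|\,m^{-2})\). So \(S\) converges uniformly, is holomorphic, and satisfies \(|S(t)-1|\le |t|\sum_{m\ge1}(1/2)_m^2(m!)^{-2}(m+1/4)/(m-1/4)^2\). A partial-sum-plus-tail estimate gives \(|S-1|\le 1/2\) on the disk, hence \(|S|\ge 1/2\).

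\emph{The series \(Q\).} The inner sum is \(\sum_m d_md_{m+j}=d_j+\sum_{m\ge1}d_md_{m+j}\), which is \(O(|t|\,j^{-3/2})\). Its square divided by \(t\) is \(O(|t|\,j^{-3})\), and multiplying by \(j\) gives a summable \(O(|t|j^{-2})\) majorant. So \(Q\) is holomorphic with \(Q(0)=0\) and an explicit bound \(|Q(t)|\le c|t|\).

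Uniform convergence on the closed disk yields holomorphy inside and continuity up to the boundary. The apparent singularity at \(t=0\) is removable because every \(1/t\) is paired with a factor of \(t\).

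\textbf{Step 3: assembly and the main difficulty.} Combining the pieces,
\[
|\cT|\le \tfrac14\left(M_*+\pi\, |S|^{-2}|Q|\right)\le \tfrac14\left(M_*+4\pi c/4\right).
\]
It remains to check that this is at most \(3\), i.e.\ \(M_*+\pi c\le 12\). The main obstacle is getting the constants in Step 2 honest and small enough: a lower bound for \(|S|\) that is uniform in complex \(t\), and a clean majorant for the double sum in \(Q\). My first attempt would be the bound \((1/2)_m/m!\le 1/\sqrt{\pi m}\), summing the first few terms exactly and the tail by an integral comparison. The stated target of \(3\) is generous, since \(\cT\approx\pi/8\), so crude constants should suffice.
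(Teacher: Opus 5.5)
Your proposal is correct and follows essentially the same route as the paper. Both arguments use termwise majorants $|d_m(t)|\lesssim m^{-3/2}$ on $|t|\le 1/4$ to get uniform convergence (hence holomorphy and continuity) of $S$ and $Q$, a bound on $|S-1|$ to keep $S$ away from zero, the sign-definite $\cot$/$\tan$ Taylor series to bound $M$, and a crude final numerical assembly. The only difference is that the paper keeps both factors of $t$ in $d_m$, via $(2t)_m/m!=2t\prod_{k=1}^{m-1}\frac{k+2t}{k+1}$ and Wendel's inequality, obtaining $|d_m|\le\frac{16}{5}|t|^2m^{-3/2}$ and hence $O(|t|^3)$ bounds. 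Your $O(|t|)$ bounds with $C=4/(3\sqrt\pi)$ give $|S-1|\le 0.2$ and $|Q|\lesssim 0.52$, which still close the estimate comfortably below $3$.
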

	
	\begin{proof}
		We give a direct estimate from the exact series.  For \(m\ge1\),
		\[
		d_m(t)
		=
		\frac{(2t)_m}{m!}\frac{t}{m+t}.
		\]
		Assume \(|t|\le1/4\).  Then
		\[
		\left|
		\frac{(2t)_m}{m!}
		\right|
		=
		2|t|
		\prod_{k=1}^{m-1}
		\left|
		\frac{k+2t}{k+1}
		\right|
		\le
		2|t|
		\prod_{k=1}^{m-1}
		\frac{k+1/2}{k+1}.
		\]
		Using
		\[
		\prod_{k=1}^{m-1}
		\frac{k+1/2}{k+1}
		=
		\frac{\Gamma(m+1/2)}{\Gamma(3/2)\Gamma(m+1)},
		\]
		Wendel's inequality and \(1/\Gamma(3/2)<6/5\) give
		\[
		\left|
		\frac{(2t)_m}{m!}
		\right|
		\le
		\frac{12}{5}|t|m^{-1/2}.
		\]
		Moreover
		\[
		\left|
		\frac{t}{m+t}
		\right|
		\le
		\frac{|t|}{m-|t|}
		\le
		\frac{4|t|}{3m}.
		\]
		Thus
		\begin{equation}\label{eq:dm-bound-quarter}
			|d_m(t)|
			\le
			\frac{16}{5}|t|^2m^{-3/2},
			\qquad m\ge1,
			\quad |t|\le\frac14 .
		\end{equation}
		Using \eqref{eq:dm-bound-quarter} and
		\((m+|t|)/|t|\le5m/(4|t|)\), we get
		\[
		\begin{aligned}
			|S(t)-1|
			&\le
			\sum_{m\ge1}
			\left|
			\frac{m+t}{t}
			\right|
			|d_m(t)|^2  \\
			&\le
			\frac{64}{5}|t|^3
			\sum_{m\ge1}m^{-2}
			\le
			\frac15\sum_{m\ge1}m^{-2}
			<\frac13 .
		\end{aligned}
		\]
		Consequently
		\[
		|S(t)^{-2}|<\frac94 .
		\]
		Next, for \(j\ge1\), \eqref{eq:dm-bound-quarter} gives
		\[
		\begin{aligned}
			\sum_{m\ge0}|d_m(t)d_{m+j}(t)|
			&\le
			\frac{16}{5}|t|^2j^{-3/2}
			+
			\frac{256}{25}|t|^4
			\sum_{m\ge1}m^{-3/2}(m+j)^{-3/2} \\
			&\le
			\left(
			\frac{16}{5}+\frac{48}{25}
			\right)|t|^2j^{-3/2}
			<
			6|t|^2j^{-3/2}.
		\end{aligned}
		\]
		Here we used \(\sum_{m\ge1}m^{-3/2}<3\) and \(|t|^2\le1/16\).  Therefore
		\[
		\begin{aligned}
			|Q(t)|
			&\le
			\sum_{j\ge1}\frac{j}{|t|}
			\left(6|t|^2j^{-3/2}\right)^2 \\
			&=
			36|t|^3\sum_{j\ge1}j^{-2}
			\le
			\frac{36}{64}\cdot\frac53
			=
			\frac{15}{16}<1.
		\end{aligned}
		\]
		Finally, put \(x=\pi t\).  On \(|x|\le\pi/4\), the standard expansion
		\[
		x\cot x=1-2\sum_{n\ge1}\zeta(2n)\left(\frac{x}{\pi}\right)^{2n}
		\]
		and the bound \(\zeta(2n)<2\) imply
		\[
		|x\cot x|
		\le
		1+4\sum_{n\ge1}4^{-2n}
		=
		\frac{19}{15}.
		\]
		Also, since the Taylor coefficients of \(\tan x\) are positive,
		\[
		|x\tan x|\le |x|\tan |x|\le\frac\pi4<1.
		\]
		Thus
		\[
		|M(t)|
		\le
		\frac{\pi}{2}\left(\frac{19}{15}+\frac13\right)
		=
		\frac{4\pi}{5}
		<3.
		\]
		Combining the estimates for \(M\), \(S\), and \(Q\), we obtain
		\[
		|\cT(t)|
		\le
		\frac14\left[3+\pi\cdot\frac94\cdot1\right]
		<3.
		\]
		The estimates also show uniform convergence of the defining series for \(S\) and
		\(Q\) on \(|t|\le1/4\), after filling in the removable values at \(t=0\).
		Hence \(S\) and \(Q\) are holomorphic in \(|t|<1/4\).  Since
		\(|S(t)-1|<1/3\), the function \(S\) has no zero there, and therefore
		\(S(t)^{-2}\) and \(\cT(t)\) are holomorphic as well.
	\end{proof}
	
	\begin{corollary}\label{cor:cauchy-coeff}
		If
		\[
		\cT(t)=\sum_{k\ge0}A_kt^k
		\]
		is the Taylor expansion at \(t=0\), then
		\[
		|A_k|\le3\cdot4^k,
		\qquad k\ge0.
		\]
	\end{corollary}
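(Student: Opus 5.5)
This is a direct application of Cauchy's coefficient estimates to the holomorphic function supplied by Lemma~\ref{lem:analytic-bound}. By that lemma, once the removable singularity at \(t=0\) is filled in, \(\cT\) is holomorphic in the open disk \(|t|<1/4\), continuous on the closed disk \(|t|\le1/4\), and bounded there by \(3\). Its Taylor series at the origin therefore converges in \(|t|<1/4\), and its coefficients are given by the Cauchy integral formula
\[
A_k=\frac{1}{2\pi i}\oint_{|t|=r}\frac{\cT(t)}{t^{k+1}}\,dt,\qquad 0<r<\frac14 .
\]

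The plan has two steps. First, for a fixed \(r<1/4\), estimate the integrand on the circle \(|t|=r\) using \(|\cT|\le3\). This gives
\[
|A_k|\le \frac{1}{2\pi}\cdot 2\pi r\cdot\frac{3}{r^{k+1}}=3r^{-k}.
\]
Second, let \(r\uparrow 1/4\). The bound \(3r^{-k}\) tends to \(3\cdot4^k\), which is the claim.

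There is no real obstacle. One could integrate directly over \(|t|=1/4\), since \(\cT\) is continuous up to that circle, but the limiting argument avoids having to justify the Cauchy formula on the boundary circle. The only point to state explicitly is that the bound \(|\cT(t)|\le3\) from Lemma~\ref{lem:analytic-bound} holds for all complex \(t\) with \(|t|\le1/4\), not just for real \(t\). This is true because that lemma's estimates on \(M\), \(S^{-2}\) and \(Q\) were carried out for complex \(t\).
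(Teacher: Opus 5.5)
Your proof is correct and is the paper's argument: Cauchy's coefficient estimate, using the bound \(|\cT(t)|\le 3\) for complex \(|t|\le 1/4\) from Lemma~\ref{lem:analytic-bound}. Working on circles \(|t|=r<1/4\) and letting \(r\uparrow 1/4\) is just a slightly more careful version of the paper's one-line appeal to Cauchy's estimate on \(|t|=1/4\).
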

	
	\begin{proof}
		This is Cauchy's estimate on the circle \(|t|=1/4\), using
		Lemma~\ref{lem:analytic-bound}.
	\end{proof}
	
	\subsection{The first two nonzero coefficients}
	
	\begin{lemma}\label{lem:first-coefficients}
		As \(t\to0\),
		\[
		\cT(t)
		=
		\frac{\pi}{8}
		-
		\pi\zeta(3)t^3
		+
		\frac{\pi^5}{45}t^4
		+
		O(t^5).
		\]
		Equivalently,
		\[
		A_0=\frac{\pi}{8},
		\qquad
		A_1=A_2=0,
		\qquad
		A_3=-\pi\zeta(3),
		\qquad
		A_4=\frac{\pi^5}{45}.
		\]
	\end{lemma}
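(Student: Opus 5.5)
The plan is to expand each of the three ingredients $M(t)$, $S(t)$ and $Q(t)$ in powers of $t$ through order $t^4$, and then substitute into $\cT=\frac14[M-\pi S^{-2}Q]$. By Lemma~\ref{lem:analytic-bound}, $\cT$ is holomorphic near $0$, so the remainder is automatically $O(t^5)$ once the coefficients $A_0,\dots,A_4$ are identified. It therefore suffices to compute these coefficients, which we can do by formal expansion term by term, provided the error terms are summable uniformly in $m$ and $j$.

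\emph{Step 1: the moment term.} Use the classical series $x\cot x=1-\frac{x^2}{3}-\frac{x^4}{45}+O(x^6)$ and $\frac{x\tan x}{3}=\frac{x^2}{3}+\frac{x^4}{9}+O(x^6)$ with $x=\pi t$. The quadratic terms cancel, and
\[
M(t)=\frac{\pi}{2}+\frac{2\pi^5}{45}t^4+O(t^6).
\]

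\emph{Step 2: expansion of $d_m$.} For $m\ge1$, write
\[
\frac{(2t)_m}{m!}=\frac{2t}{m}\prod_{k=1}^{m-1}\Bigl(1+\frac{2t}{k}\Bigr)=\frac{2t}{m}\bigl(1+2tH_{m-1}+O(t^2\log^2 m)\bigr)
\]
and $\frac{t}{m+t}=\frac tm-\frac{t^2}{m^2}+O(t^3/m^3)$. This gives
\[
d_m(t)=\frac{2t^2}{m^2}+t^3c_m+O\!\Bigl(\frac{t^4\log^2 m}{m^2}\Bigr),\qquad c_m=\frac{4H_{m-1}}{m^2}-\frac{2}{m^3}.
\]
Here one must check that the remainder is uniform in $m$ for small real $t$; the bounds in the proof of Lemma~\ref{lem:analytic-bound} already supply the needed decay $m^{-3/2}$, and it is enough to track the first correction uniformly.

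\emph{Step 3: $S$ and $Q$.} Insert the expansion of $d_m$ into
\[
S(t)=1+\sum_{m\ge1}\Bigl(\frac mt+1\Bigr)d_m^2 .
\]
This gives $S=1+4\zeta(3)t^3+t^4\bigl(4\zeta(4)+2\sum_m mc_m\cdot\frac{2}{m^2}\bigr)+O(t^5)$. Evaluating the $t^4$ coefficient requires the Euler sum $\sum_{m\ge1}H_{m-1}/m^3=\frac54\zeta(4)-\zeta(4)=\frac14\zeta(4)$, after which the coefficient vanishes. In fact only $S^{-2}=1+O(t^3)$ is needed, since it multiplies $Q=O(t^3)$. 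For $Q$, the inner sum is
\[
d_j+\sum_{m\ge1}d_md_{m+j}=\frac{2t^2}{j^2}+t^3c_j+O(t^4),
\]
because the convolution part is $O(t^4)$. Hence
\[
Q(t)=\sum_j\frac jt\Bigl(\frac{4t^4}{j^4}+\frac{4t^5c_j}{j^2}\Bigr)+O(t^5)=4\zeta(3)t^3+4t^4\sum_j\frac{c_j}{j}+O(t^5).
\]
Using the same Euler sum, $\sum_j c_j/j=\zeta(4)-2\zeta(4)=-\zeta(4)$, so $Q=4\zeta(3)t^3-4\zeta(4)t^4+O(t^5)$.

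\emph{Step 4: assembly.} Since $S^{-2}Q=Q+O(t^6)$, we get
\[
\cT(t)=\frac14\Bigl[\frac\pi2+\frac{2\pi^5}{45}t^4-4\pi\zeta(3)t^3+4\pi\zeta(4)t^4\Bigr]+O(t^5).
\]
With $\zeta(4)=\pi^4/90$ this equals $\frac\pi8-\pi\zeta(3)t^3+\frac{\pi^5}{45}t^4+O(t^5)$, which is the claimed expansion.

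I expect the main obstacle to be justifying the termwise expansion at order $t^4$. The first correction $c_m$ carries $\log m$ growth, so the remainders in Step 2 must be shown summable against the weights $m/t$ in $S$ and $j/t$ in $Q$. I would handle this with explicit bounds of the type used in Lemma~\ref{lem:analytic-bound}. A cleaner alternative is to note that each summand is holomorphic in $t$ and the series converge uniformly on $|t|\le1/4$; then Taylor coefficients may be extracted summand by summand, which reduces everything to the two zeta and Euler-sum evaluations above.
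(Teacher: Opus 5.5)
Your proposal is correct and follows the paper's proof essentially step for step. You use the same expansions of $M$ and of $d_m$ (with the same $c_m=4H_{m-1}/m^2-2/m^3$), the same coefficients $s_3=q_3=4\zeta(3)$, $s_4=0$, $q_4=-4\zeta(4)$ via the Euler sum $\sum_{m\ge1} H_m/m^3=\frac54\zeta(4)$, and the same justification of termwise coefficient extraction from the uniform bounds of Lemma~\ref{lem:analytic-bound}. One cosmetic point: the product remainder in your Step 2 is really of size $t^2\log^2 m\cdot m^{2|t|}$ rather than uniformly $O(t^2\log^2 m)$, but your Weierstrass-type alternative sidesteps this anyway.
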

	
	\begin{proof}
		For \(m\ge1\),
		\[
		d_m(t)=\frac{(2t)_m}{m!}\frac{t}{m+t}.
		\]
		Since
		\[
		\frac{(2t)_m}{m!}
		=
		\frac{2t}{m}
		\prod_{\ell=1}^{m-1}
		\left(1+\frac{2t}{\ell}\right),
		\]
		we have, for fixed \(m\),
		\[
		\frac{(2t)_m}{m!}
		=
		\frac{2t}{m}
		\left(1+2H_{m-1}t+O_m(t^2)\right),
		\]
		where \(H_m=1+1/2+\cdots+1/m\).  Also
		\[
		\frac{t}{m+t}
		=
		\frac{t}{m}-\frac{t^2}{m^2}+O_m(t^3).
		\]
		Thus
		\begin{equation}\label{eq:dm-small-expansion}
			d_m(t)
			=
			\frac{2t^2}{m^2}
			+
			\left(
			\frac{4H_{m-1}}{m^2}
			-
			\frac{2}{m^3}
			\right)t^3
			+
			O_m(t^4).
		\end{equation}
		The bounds used in Lemma~\ref{lem:analytic-bound}, together with Cauchy's
		estimate on small circles centered at the origin, justify the coefficient
		extractions below.  More explicitly, on every circle \(|t|=r<1/4\), the
		Taylor coefficients of the functions \(d_m(t)\) which are needed up to order
		three are bounded by an absolute multiple of
		\((1+H_m)m^{-2}\).  Thus the coefficient sums which occur in \(S\) and
		\(Q\) are dominated by absolutely summable series such as
		\(\sum m^{-3}H_m\) and \(\sum m^{-4}\).
		
		First,
		\[
		S(t)
		=
		1+s_3t^3+s_4t^4+O(t^5).
		\]
		Substituting \eqref{eq:dm-small-expansion} into
		\[
		S(t)=1+
		\sum_{m\ge1}\frac{m+t}{t}d_m(t)^2
		\]
		gives
		\[
		s_3=4\sum_{m\ge1}\frac1{m^3}=4\zeta(3),
		\]
		and
		\[
		s_4
		=
		16\sum_{m\ge1}\frac{H_{m-1}}{m^3}
		-
		4\sum_{m\ge1}\frac1{m^4}.
		\]
		Using the classical Euler sum identity
		\cite{BorweinBorweinGirgensohn}
		\[
		\sum_{m\ge1}\frac{H_m}{m^3}=\frac54\zeta(4),
		\]
		we find
		\[
		\sum_{m\ge1}\frac{H_{m-1}}{m^3}
		=
		\sum_{m\ge1}\frac{H_m}{m^3}
		-
		\sum_{m\ge1}\frac1{m^4}
		=
		\frac14\zeta(4).
		\]
		Therefore \(s_4=0\), and hence
		\begin{equation}\label{eq:S-expansion-first}
			S(t)=1+4\zeta(3)t^3+O(t^5).
		\end{equation}
		
		Next,
		\[
		Q(t)=q_3t^3+q_4t^4+O(t^5).
		\]
		For fixed \(j\ge1\),
		\[
		\sum_{m\ge0}d_m(t)d_{m+j}(t)
		=
		d_j(t)+O_j(t^4),
		\]
		because \(d_0(t)=1\) and \(d_m(t)=O_m(t^2)\) for \(m\ge1\).  The domination
		just described also applies after summing over \(j\): the relevant coefficient
		bounds are controlled by \(j^{-3}H_j\) and \(j^{-4}\), both of which are
		summable.  Hence the resulting summation over \(j\) is legitimate.  Using
		\eqref{eq:dm-small-expansion}, we obtain
		\[
		q_3
		=
		\sum_{j\ge1}j\left(\frac{2}{j^2}\right)^2
		=
		4\zeta(3),
		\]
		and
		\[
		\begin{aligned}
			q_4
			&=
			\sum_{j\ge1}
			2j\frac{2}{j^2}
			\left(
			\frac{4H_{j-1}}{j^2}
			-
			\frac{2}{j^3}
			\right) \\
			&=
			16\sum_{j\ge1}\frac{H_{j-1}}{j^3}
			-
			8\sum_{j\ge1}\frac1{j^4}
			=
			-4\zeta(4).
		\end{aligned}
		\]
		Thus
		\begin{equation}\label{eq:Q-expansion-first}
			Q(t)=4\zeta(3)t^3-4\zeta(4)t^4+O(t^5).
		\end{equation}
		From \eqref{eq:S-expansion-first},
		\[
		S(t)^{-2}=1+O(t^3),
		\]
		and so
		\begin{equation}\label{eq:S2Q-expansion-first}
			S(t)^{-2}Q(t)
			=
			4\zeta(3)t^3-4\zeta(4)t^4+O(t^5).
		\end{equation}
		
		Finally, putting \(x=\pi t\),
		\[
		x\cot x
		=
		1-\frac{x^2}{3}-\frac{x^4}{45}+O(x^6),
		\]
		and
		\[
		\frac{x\tan x}{3}
		=
		\frac{x^2}{3}+\frac{x^4}{9}+O(x^6).
		\]
		Therefore
		\[
		x\cot x+\frac{x\tan x}{3}
		=
		1+\frac{4x^4}{45}+O(x^6),
		\]
		and hence
		\begin{equation}\label{eq:M-expansion-first}
			M(t)
			=
			\frac{\pi}{2}
			+
			\frac{2\pi^5}{45}t^4
			+
			O(t^6).
		\end{equation}
		Combining \eqref{eq:S2Q-expansion-first} and
		\eqref{eq:M-expansion-first} in
		\[
		\cT(t)=\frac14\left(M(t)-\pi S(t)^{-2}Q(t)\right),
		\]
		we get
		\[
		\cT(t)
		=
		\frac{\pi}{8}
		-
		\pi\zeta(3)t^3
		+
		\left(
		\frac{\pi^5}{90}
		+
		\pi\zeta(4)
		\right)t^4
		+
		O(t^5).
		\]
		Since \(\zeta(4)=\pi^4/90\), the coefficient of \(t^4\) is
		\(\pi^5/45\), as claimed.
	\end{proof}
	
	\begin{proof}[Proof of Theorem~\ref{thm:intro-polygon-asymptotics}]
		By the exact-series definition above, \(\cT(1/N)=T(P_N)\) for every integer
		\(N\ge3\).  Substituting \(t=1/N\) into Lemma~\ref{lem:first-coefficients}
		gives
		\[
		T(P_N)
		=
		\frac{\pi}{8}
		-
		\frac{\pi\zeta(3)}{N^3}
		+
		\frac{\pi^5}{45N^4}
		+
		O(N^{-5}),
		\]
		which is the asserted expansion.  This also refines the leading asymptotics
		obtained from the mixed-cell reduction in Section~\ref{sec:polygons}.
	\end{proof}

	\subsection{A direct derivative estimate on \texorpdfstring{\(0<t\le1/5\)}{0 < t <= 1/5}}
	
	We now prove Theorem~\ref{thm:intro-polygon-monotonicity} without introducing
	any high-order coefficient certificate.  The point is to show directly that the
	analytic function \(\cT\) is strictly decreasing on \((0,1/5]\).  Put
	\[
	R(t):=S(t)^{-2}Q(t),
	\qquad
	\cT(t)=\frac14\bigl(M(t)-\pi R(t)\bigr).
	\]
	
	\begin{lemma}\label{lem:SQ-direct-estimates}
		For \(0<t\le1/5\),
		\[
		1\le S(t)\le \frac{11}{10},
		\qquad
		0<S'(t)\le \frac{275}{6}t^2,
		\]
		and
		\[
		R'(t)\ge \frac{5600}{1089}t^2.
		\]
	\end{lemma}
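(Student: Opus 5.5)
The plan is to work only with real \(t\in(0,1/5]\), where every ingredient is positive. For \(m\ge1\) each factor of \(d_m(t)=\frac{2t}{m}\prod_{\ell=1}^{m-1}(1+\frac{2t}{\ell})\cdot\frac{t}{m+t}\) is positive, so \(d_m(t)>0\). The proof rests on logarithmic derivatives. Write the \(m\)-th term of \(S\) as
\[
\sigma_m(t):=\frac{m+t}{t}d_m(t)^2=\Bigl(\frac{(2t)_m}{m!}\Bigr)^2\frac{t}{m+t},
\qquad
\frac{\sigma_m'}{\sigma_m}=\frac3t+\sum_{\ell=1}^{m-1}\frac{4}{\ell+2t}-\frac{1}{m+t}.
\]
This log-derivative is squeezed between \(3/t-1\) and \(3/t+4H_{m-1}\). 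Hence \(S'=\sum_{m\ge1}\sigma_m'>0\), and \(S(t)\ge1\) is immediate.

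For the upper bounds on \(S\) and \(S'\), I will use the real-variable analogue of \eqref{eq:dm-bound-quarter}. Since \(\prod_{\ell=1}^{m-1}(1+2t/\ell)\le \exp(2tH_{m-1})\) and \(2t\le 2/5\), we get
\[
\sigma_m(t)\le \frac{4t^3}{m^3}\,e^{4tH_{m-1}},
\]
or, for a cruder but cleaner version, the Wendel-type bound \(\frac{(2t)_m}{m!}\le \frac{2t}{m}\,m^{2t}\cdots\) with \(t\le1/5\). Summing gives
\[
S-1\le 4t^3\sum_{m\ge1} m^{-3}e^{4tH_{m-1}} \le \frac{4}{125}\cdot C\le\frac1{10},
\]
where I handle the first two or three terms exactly and the tail by an integral comparison. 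Similarly
\[
S'\le\sum_m\sigma_m\Bigl(\frac3t+4H_{m-1}\Bigr)\le t^2\Bigl(12\sum_m m^{-3}e^{4tH_{m-1}}+\tfrac45\sum_m m^{-3}H_{m-1}e^{4tH_{m-1}}\Bigr),
\]
which should come out below \(275/6\approx45.8\). The constant is generous, so crude summation is enough here.

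For \(R'\), I write
\[
R'=\frac{Q'}{S^2}-\frac{2QS'}{S^3}\ge \frac{100}{121}\,Q'-2QS',
\]
using \(1\le S\le 11/10\). So I need a lower bound on \(Q'\) and an upper bound on \(Q\).

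For \(Q\), the estimate is as in Lemma~\ref{lem:analytic-bound}, but with real \(t\le1/5\). We have \(c_j:=\sum_m d_md_{m+j}\le d_j(1+\epsilon)\), where \(\epsilon\) is controlled by \(\sum_m d_m d_{m+j}/d_j\) and is \(O(t^2)\). This gives \(Q\le c\,t^3\) with \(c\) slightly above \(4\zeta(3)\). Then \(2QS'\le 2c\cdot\frac{275}{6}t^5\le \frac{2c\cdot 275}{150}t^2\), using \(t\le1/5\) twice.

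For \(Q'\), every product \(d_md_{m+j}\) is an increasing function of \(t\). So \(Q'\) is bounded below by the sum of the derivatives of the \(m=0\) contributions \(\frac jt d_j^2\) alone; the cross terms only add. Each of these has log-derivative \(\ge 3/t-2/(j+t)\). Keeping just \(j=1\) (and perhaps \(j=2\)) gives
\[
Q'\ge \Bigl(\frac{4t^3}{(1+t)^2}\Bigr)'\ \ge\ \Bigl(12-\frac{8t}{1+t}\Bigr)\frac{t^2}{(1+t)^2}\ge \kappa\,t^2,
\]
with \(\kappa\) roughly \(7\)–\(8\) at \(t=1/5\). The factor \(1/1089=1/(33^2)\) in the target constant suggests that the intended bookkeeping is exactly \(S^2\le(11/10)^2\) combined with an explicit rational lower bound for the \(j=1\) term evaluated at \(t=1/5\).

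The main obstacle is this last step: making \(\frac{100}{121}Q'\) beat \(2QS'\) with room to spare, uniformly up to \(t=1/5\), while getting exactly \(5600/1089\). If the \(j=1\) term alone is too weak near \(t=1/5\), I would first add the \(j=2\) term and the \(m=1\), \(j=1\) cross term \(d_1d_2\). If that still fails, I would tighten the bound on \(Q\) by replacing \(Q\le ct^3\) with \(Q\le \frac{Q'\,t}{3}\)-type relations. Such relations follow from the log-derivative lower bound \(\ge 3/t-2\) applied to every term, and they let the \(2QS'\) correction be absorbed as a relative \(O(t^3)\) perturbation of \(Q'\).
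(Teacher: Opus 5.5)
There is a genuine gap, and it is in the $R'$ step.

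Your treatment of $S$ and $S'$ is fine. The squeeze $3/t-1\le\sigma_m'/\sigma_m\le 3/t+4H_{m-1}$, together with $\sigma_m\le 4t^3m^{-3}e^{4tH_{m-1}}$, gives $S\le\frac{11}{10}$ and $S'\le\frac{275}{6}t^2$ with room to spare. (The coefficient of $\sum_m m^{-3}H_{m-1}e^{4tH_{m-1}}$ should be $16t\le\frac{16}{5}$, not $\frac45$; this is harmless.)

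Your main route is $R'\ge\frac{100}{121}Q'-2QS'$, with $Q'$ bounded below by the $j=1$ term. With the bounds you propose ($1\le S\le\frac{11}{10}$, $S'\le\frac{275}{6}t^2$), this cannot reach $\frac{5600}{1089}t^2$ at $t=1/5$, however sharp your upper bound on $Q$ is:
\begin{itemize}
\item The $j=1$ term contributes $\frac{100}{121}\cdot\frac{200}{27}t^2\approx 6.12\,t^2$.
\item Any upper bound on $Q$ must dominate $Q(1/5)\ge\frac{4t^3}{(1+t)^2}=\frac{25}{9}t^3$.
\item So the subtracted term is at least $2\cdot\frac{25}{9}t^3\cdot\frac{275}{6}t^2=\frac{55}{27}t^2\approx 2.04\,t^2$ (at $t=1/5$).
\item That leaves at most $\approx 4.08\,t^2<\frac{5600}{1089}t^2\approx 5.14\,t^2$.
\end{itemize}
Your displayed bound $2QS'\le\frac{2c\cdot 275}{150}t^2$ is weaker still.

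Your last fallback, $Q\le tQ'/(3-2t)$ from $Q'/Q\ge 3/t-2$, does not rescue this while $Q'$ is still bounded below by the $j=1$ term. Even the favorable form $R'\ge\frac{Q'}{S^2}\bigl(1-\frac{2tS'}{(3-2t)S}\bigr)$ yields only $\frac{100}{121}\cdot\frac{28}{39}\cdot\frac{200}{27}t^2\approx 4.40\,t^2$ at $t=1/5$.

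Your reading of $1089=33^2$ is also off. In fact $\frac{5600}{1089}=\frac{1000}{363}\cdot\frac{28}{15}$, where $\frac{1000}{363}=\frac{10/3}{(11/10)^2}$ encodes a lower bound $Q\ge\frac{10}{3}t^3$.

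The missing ingredient is a lower bound for $Q$ that sees all $j$, used multiplicatively as in the paper.
\begin{itemize}
\item Write $R'=R\,(Q'/Q-2S'/S)$.
\item Apply $d_m'/d_m\ge 2/t-1$ to every summand $d_md_{m+j}$ of $B_j$. This gives $B_j'\ge(2/t-1)B_j$, hence $Q'/Q\ge 3/t-2$, so the bracket is at least $3/t-2-\frac{275}{3}t^2>0$. These same log-derivative bounds are what actually justify your claim that cross terms ``only add'', since $Q$ carries the decreasing factor $1/t$.
\item Next, $\prod_{\ell<m}(1+2t/\ell)\ge m^{2t}$ gives $d_j\ge\frac53t^2j^{-2+2t}$.
\item Hence $Q\ge\sum_j\frac jt d_j^2\ge\frac{25}{9}\zeta(3)t^3>\frac{10}{3}t^3$. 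The $j=1$ term alone gives only $\frac{25}{9}t^3$ at $t=1/5$, which is not enough.
\item Then $R=Q/S^2\ge\frac{1000}{363}t^3$, and so $R'\ge\frac{1000}{363}t^2\bigl(3-2t-\frac{275}{3}t^3\bigr)\ge\frac{1000}{363}\cdot\frac{28}{15}t^2=\frac{5600}{1089}t^2$.
\end{itemize}
In this multiplicative form the crude $S'$ estimate costs only a relative factor $1-O(t^3)$. Your additive form instead forces a partial lower bound for $Q'$ to beat the full $Q$ times the crude $S'$ bound.
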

	
	\begin{proof}
		For \(m\ge1\), the coefficients in the exact series can be written as
		\begin{equation}\label{eq:dm-positive-factorization-N5}
			d_m(t)=
			\frac{2t^2}{m(m+t)}
			\prod_{\ell=1}^{m-1}\left(1+\frac{2t}{\ell}\right).
		\end{equation}
		All factors are positive for \(t>0\).  We shall use the following two-sided
		bounds for the product.  Put \(a=2t\), so that \(0<a\le2/5\).  Since
		\(\log(1+ax)\ge a\log(1+x)\) for \(0\le a\le1\) and \(x\ge0\),
		\[
		\prod_{\ell=1}^{m-1}\left(1+\frac{a}{\ell}\right)
		\ge
		\prod_{\ell=1}^{m-1}\left(1+\frac1\ell\right)^a
		=m^a.
		\]
		For the upper bound we use Wendel's inequality in the form
		\[
		\frac{\Gamma(m+a)}{\Gamma(m)}\le m^a,
		\qquad
		\Gamma(1+a)\ge (1+a)^{a-1},
		\qquad 0\le a\le1.
		\]
		Hence
		\[
		\prod_{\ell=1}^{m-1}\left(1+\frac{a}{\ell}\right)
		=
		\frac{\Gamma(m+a)}{\Gamma(m)\Gamma(1+a)}
		\le m^a(1+a)^{1-a}.
		\]
		The function \((1-a)\log(1+a)\) is increasing on \([0,2/5]\), because
		\(\log(1+a)\le a\le (1-a)/(1+a)\) there.  Therefore
		\[
		(1+a)^{1-a}\le \left(\frac75\right)^{3/5}<\frac54,
		\]
		the last inequality following after raising to the fifth power.  Thus, for
		\(0<t\le1/5\),
		\begin{equation}\label{eq:dm-upper-direct-N5}
			d_m(t)\le \frac52 t^2m^{-2+2t},
		\end{equation}
		and, since \(m+t\le(6/5)m\),
		\begin{equation}\label{eq:dm-lower-direct-N5}
			d_m(t)\ge \frac53 t^2m^{-2+2t}.
		\end{equation}
		
		We also record the justification of termwise differentiation.  Fix a compact
		interval \(I=[\alpha,\beta]\subset(0,1/5]\).  From the logarithmic derivative
		formula
		\begin{equation}\label{eq:log-dm-derivative-N5}
			\frac{d}{dt}\log d_m(t)
			=
			\frac2t-\frac1{m+t}
			+\sum_{\ell=1}^{m-1}\frac2{\ell+2t},
		\end{equation}
		and from \eqref{eq:dm-upper-direct-N5}, there is a constant \(C_I\) such that
		\[
		|d_m(t)|\le C_I m^{-2+2\beta},
		\qquad
		|d_m'(t)|\le C_I(1+\log m)m^{-2+2\beta},
		\qquad t\in I.
		\]
		Since \(-3+4\beta<-2\), the series defining \(S\) and its differentiated
		series are uniformly convergent on \(I\).  Similarly, if
		\[
		B_j(t):=\sum_{m\ge0}d_m(t)d_{m+j}(t),
		\]
		then the series defining \(B_j\) and \(B_j'\) are uniformly convergent on
		\(I\), and
		\[
		|B_j(t)|\le C_Ij^{-2+2\beta},
		\qquad
		|B_j'(t)|\le C_I(1+\log j)j^{-2+2\beta}.
		\]
		Here we used the elementary convolution estimate
		\(\sum_{m\ge1}m^{-\gamma}(m+j)^{-\gamma}\le C_\gamma j^{-\gamma}\),
		and the same estimate with one logarithmic factor, where
		\(\gamma=2-2\beta>1\).  Consequently the series defining \(Q\) and its
		derivative are uniformly convergent on \(I\).  The differentiations below
		are therefore justified on \((0,1/5]\).
		
		Let
		\[
		w_m(t):=\frac{m+t}{t}d_m(t)^2,
		\qquad m\ge1.
		\]
		Then \(S(t)=1+\sum_{m\ge1}w_m(t)\).  From
		\eqref{eq:dm-upper-direct-N5} and \(m+t\le(6/5)m\),
		\begin{equation}\label{eq:w-upper-N5}
			w_m(t)\le \frac{15}{2}t^3m^{-3+4t}.
		\end{equation}
		Set \(p=3-4t\).  Since \(0<t\le1/5\), we have \(p\ge11/5\).  Hence
		\[
		\sum_{m\ge1}m^{-p}
		\le
		1+2^{-p}+\int_2^\infty x^{-p}\,dx
		\le
		1+\frac14+\frac{5}{12}
		=
		\frac53.
		\]
		It follows from \eqref{eq:w-upper-N5} that
		\[
		S(t)\le1+\frac{15}{2}t^3\cdot\frac53
		\le1+\frac1{10}
		=\frac{11}{10}.
		\]
		The lower bound \(S(t)\ge1\) is immediate.
		
		Next,
		\[
		\frac{w_m'(t)}{w_m(t)}
		=
		\frac3t-\frac1{m+t}
		+\sum_{\ell=1}^{m-1}\frac4{\ell+2t}
		\le
		\frac3t+4H_{m-1}.
		\]
		The terms are positive, so \(S'(t)>0\).  Moreover,
		\[
		\sum_{m\ge1}H_{m-1}m^{-p}
		=
		\sum_{\ell\ge1}\frac1\ell\sum_{m\ge \ell+1}m^{-p}
		\le
		\frac1{p-1}\sum_{\ell\ge1}\ell^{-p}
		\le
		\frac56\cdot\frac53
		=
		\frac{25}{18}.
		\]
		Using this, the preceding estimate for \(\sum m^{-p}\), and
		\eqref{eq:w-upper-N5}, we get
		\[
		\begin{aligned}
			S'(t)
			&\le
			\frac{15}{2}t^3
			\left(
			\frac3t\sum_{m\ge1}m^{-p}
			+4\sum_{m\ge1}H_{m-1}m^{-p}
			\right) \\
			&\le
			\frac{15}{2}t^3
			\left(
			\frac5t+\frac{50}{9}
			\right)
			\le
			\frac{275}{6}t^2,
		\end{aligned}
		\]
		where the last step uses \(t\le1/5\).
		
		It remains to bound \(R'\) from below.  Since \(B_j(t)\ge d_j(t)\),
		\eqref{eq:dm-lower-direct-N5} gives
		\[
		\begin{aligned}
			Q(t)
			&=\sum_{j\ge1}\frac jt B_j(t)^2
			\ge
			\sum_{j\ge1}\frac jt d_j(t)^2  \\
			&\ge
			\frac{25}{9}t^3\sum_{j\ge1}j^{-3+4t}
			\ge
			\frac{25}{9}t^3\sum_{j\ge1}j^{-3}
			>
			\frac{10}{3}t^3.
		\end{aligned}
		\]
		Here we used the elementary bound \(\zeta(3)>6/5\), for instance from the
		finite partial sum \(\sum_{j=1}^{16}j^{-3}>6/5\).
		
		By \eqref{eq:log-dm-derivative-N5},
		\[
		\frac{d_m'(t)}{d_m(t)}\ge \frac2t-1,
		\qquad m\ge1.
		\]
		Therefore every summand in \(B_j\) has logarithmic derivative at least
		\(2/t-1\) (for the summand \(m=0\), this is the estimate for \(d_j\)).  Hence
		\[
		B_j'(t)\ge\left(\frac2t-1\right)B_j(t),
		\]
		and so each positive summand \(jB_j(t)^2/t\) of \(Q\) satisfies
		\[
		\frac{d}{dt}\log\left(\frac jt B_j(t)^2\right)
		\ge
		\frac3t-2.
		\]
		Thus
		\[
		\frac{Q'(t)}{Q(t)}\ge \frac3t-2.
		\]
		Since \(R=QS^{-2}\), we have
		\[
		R'(t)=R(t)\left(\frac{Q'(t)}{Q(t)}-2\frac{S'(t)}{S(t)}\right).
		\]
		Using \(S\le11/10\), \(S\ge1\), and the bounds already proved,
		\[
		R(t)\ge \frac{(10/3)t^3}{(11/10)^2}=\frac{1000}{363}t^3,
		\]
		and
		\[
		\frac{Q'(t)}{Q(t)}-2\frac{S'(t)}{S(t)}
		\ge
		\frac3t-2-\frac{275}{3}t^2.
		\]
		Consequently,
		\[
		R'(t)
		\ge
		\frac{1000}{363}t^2
		\left(3-2t-\frac{275}{3}t^3\right).
		\]
		For \(0<t\le1/5\), the factor in parentheses is at least
		\[
		3-\frac25-\frac{275}{3}\cdot\frac1{125}
		=
		\frac{28}{15}.
		\]
		Therefore
		\[
		R'(t)\ge \frac{1000}{363}\cdot\frac{28}{15}t^2
		=\frac{5600}{1089}t^2.
		\]
		This proves the lemma.
	\end{proof}
	
	\begin{lemma}\label{lem:M-derivative-direct}
		For \(0<t\le1/5\),
		\[
		M'(t)<16t^2.
		\]
	\end{lemma}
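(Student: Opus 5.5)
The approach is to expand $M$ in an explicit power series in $t$ whose coefficients are all nonnegative. Then $M'(t)/t^2$ is increasing on $(0,1/5]$, and it suffices to check the inequality at the single endpoint $t=1/5$ with a crude tail bound. Put $x=\pi t$ and use the classical expansions
\[
x\cot x=1-2\sum_{n\ge1}\zeta(2n)t^{2n},\qquad
x\tan x=2\sum_{n\ge1}(4^n-1)\zeta(2n)t^{2n},
\]
both valid for $|t|<1/2$. The first is the same expansion used in Lemma~\ref{lem:analytic-bound}. Combining them gives
\[
x\cot x+\frac{x\tan x}{3}
=1+\frac23\sum_{n\ge2}(4^n-4)\zeta(2n)t^{2n}.
\]
The $n=1$ term cancels, consistent with \eqref{eq:M-expansion-first}. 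Hence
\[
M(t)=\frac{\pi}{2}+\frac{\pi}{3}\sum_{n\ge2}(4^n-4)\zeta(2n)t^{2n},
\qquad
\frac{M'(t)}{t^2}=\frac{2\pi}{3}\sum_{n\ge2}n(4^n-4)\zeta(2n)t^{2n-3}.
\]
Termwise differentiation is legitimate by absolute convergence for $|t|<1/2$.

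Every coefficient above is positive and every exponent satisfies $2n-3\ge1$, so $M'(t)/t^2$ is strictly increasing on $(0,1/5]$. It therefore suffices to prove $M'(1/5)<16/25$, that is,
\[
\frac{2\pi}{3}\sum_{n\ge2}n(4^n-4)\zeta(2n)5^{3-2n}<16.
\]

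To check this, I treat the $n=2$ term exactly using $\zeta(4)=\pi^4/90$. It equals $\frac{2\pi}{3}\cdot 2\cdot 12\cdot\frac{\pi^4}{90}\cdot\frac15=\frac{8\pi^5}{225}$, which is about $10.9$; one can certify that it is below $11$ from $\pi^5<306.02$. For $n\ge3$, I use $\zeta(2n)\le\zeta(6)<1.02$ and $4^n-4<4^n$, which bounds the tail by
\[
\frac{2\pi}{3}\cdot 1.02\cdot 125\sum_{n\ge3}n\Bigl(\frac{4}{25}\Bigr)^n.
\]
The sum $\sum_{n\ge3}n r^n$ with $r=4/25$ has the closed form $\frac{r}{(1-r)^2}-r-2r^2$. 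Evaluating it gives a tail of roughly $4.0$. The total is then about $14.9<16$, which is the claim.

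The only delicate point is that the margin is modest, about $1.1$. Replacing $4^n-4$ by $4^n$ already at $n=3$ costs a little, so if the rational bookkeeping turns out too tight I would keep the exact $n=3$ term, $\frac{2\pi}{3}\cdot180\,\zeta(6)\cdot\frac1{125}\approx3.07$, and apply the geometric bound only from $n=4$ on, where the tail is below $1$. Everything else is routine: bounds on $\pi$, $\pi^5$ and $\zeta(6)$ by rationals, and summing a geometric-type series.
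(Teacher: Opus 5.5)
Your argument is correct, but it takes a different route from the paper.

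\textbf{The paper's route.} It keeps the two parts of $F(x)=x\cot x+\frac13x\tan x$ separate. It truncates the $\zeta$-series of $(x\cot x)'$ after two terms, since the discarded terms are negative. It controls the tangent part through the elementary inequality $\sec^2x\le1+x^2+x^4$ on $[0,22/35]$, obtained from the alternating Taylor series of $\cos 2x$. This gives $F'(x)\le\frac{16}{45}x^3+\frac25x^5$. Rational arithmetic with $\pi<22/7$ then yields $M'(t)\le c\,t^2$ with $c\approx15.75$.

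\textbf{Your route.} You bring in the companion expansion $x\tan x=2\sum_{n\ge1}(4^n-1)\zeta(2n)t^{2n}$, which follows from $\tan x=\cot x-2\cot 2x$. Then $M-\frac{\pi}{2}$ is a single power series with nonnegative coefficients, in which the $t^2$ terms cancel. This makes $M'(t)/t^2$ increasing and reduces the lemma to the single number $25M'(1/5)\approx14.80$.

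\textbf{Comparison.} Your route is more structural and gives a better constant. The paper's route avoids the tangent $\zeta$-series and any bound on $\zeta(6)$. It needs only elementary polynomial inequalities and one rational bound for $\pi$.

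\textbf{Numerical corrections.} With $r=4/25$ one has $\sum_{n\ge3}nr^n=\frac{100}{441}-\frac{132}{625}=\frac{4288}{275625}\approx0.01556$. So your crude tail bound is about $4.15$, not $4.0$, and the certified total is about $15.1$ rather than $14.9$. The margin below $16$ is still comfortable. Also, $\pi^5<306.02$ is true but extremely tight, since $\pi^5\approx306.0197$. The bound $\pi<22/7$ already gives $\frac{8\pi^5}{225}<10.91$. Finally, $\zeta(6)<1.02$ follows from $1+2^{-6}+3^{-6}+\int_3^\infty x^{-6}\,dx<1.018$.
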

	
	\begin{proof}
		Put \(x=\pi t\) and
		\[
		F(x):=x\cot x+\frac{x\tan x}{3}.
		\]
		Then \(M(t)=(\pi/2)F(\pi t)\).  Since \(t\le1/5\) and \(\pi<22/7\), we have
		\(0<x\le22/35\).
		
		We first bound the cotangent part.  The standard expansion
		\[
		x\cot x=1-2\sum_{n\ge1}\zeta(2n)\left(\frac{x}{\pi}\right)^{2n}
		\]
		shows that
		\begin{equation}\label{eq:xcot-derivative-N5}
			(x\cot x)'
			\le
			-\frac{2x}{3}-\frac{4x^3}{45}.
		\end{equation}
		Next we prove a simple bound for the secant.  Let \(y=x^2\).  Since
		\(x\le22/35\), we have \(0\le y<2/5\).  The alternating Taylor expansion of
		\(\cos(2x)\) gives
		\[
		\cos^2x=\frac{1+\cos(2x)}2
		\ge
		1-y+\frac{y^2}{3}-\frac{2y^3}{45}.
		\]
		Moreover,
		\[
		\left(1-y+\frac{y^2}{3}-\frac{2y^3}{45}\right)(1+y+y^2)-1
		=
		\frac{y^2}{45}\left(15-32y+13y^2-2y^3\right)\ge0,
		\]
		because \(0\le y\le2/5\).  Hence
		\begin{equation}\label{eq:sec-square-N5}
			\sec^2x\le1+x^2+x^4,
			\qquad 0\le x\le \frac{22}{35}.
		\end{equation}
		It follows that
		\[
		\tan x\le x+\frac{x^3}{3}+\frac{x^5}{5},
		\qquad
		x\sec^2x\le x+x^3+x^5.
		\]
		Combining these estimates with \eqref{eq:xcot-derivative-N5}, we obtain
		\[
		\begin{aligned}
			F'(x)
			&=(x\cot x)'+\frac13(\tan x+x\sec^2x) \\
			&\le
			-\frac{2x}{3}-\frac{4x^3}{45}
			+\frac13\left(2x+\frac{4x^3}{3}+\frac{6x^5}{5}\right) \\
			&=
			\frac{16}{45}x^3+\frac25x^5.
		\end{aligned}
		\]
		Since \(x\le22/35\),
		\[
		F'(x)
		\le
		\left(\frac{16}{45}+\frac25\left(\frac{22}{35}\right)^2\right)x^3
		=
		\frac{28312}{55125}x^3.
		\]
		Therefore, using again \(t\le1/5\) and \(\pi<22/7\),
		\[
		M'(t)
		=\frac{\pi^2}{2}F'(\pi t)
		\le
		\frac{28312}{55125}\frac{\pi^5}{2}t^3
		\le
		\frac{28312}{55125}\frac{1}{10}\left(\frac{22}{7}\right)^5t^2
		<16t^2.
		\]
	\end{proof}
	
	\subsection{Proof of monotonicity for \texorpdfstring{\(N\ge3\)}{N >= 3}}
	
	\begin{proof}[Proof of Theorem~\ref{thm:intro-polygon-monotonicity}]
		For \(0<t\le1/5\), Lemmas~\ref{lem:SQ-direct-estimates} and
		\ref{lem:M-derivative-direct} give
		\[
		\cT'(t)
		=
		\frac14\bigl(M'(t)-\pi R'(t)\bigr)
		<
		\frac14\left(16-\pi\frac{5600}{1089}\right)t^2.
		\]
		Using the elementary lower bound \(\pi>157/50\),
		\[
		\pi\frac{5600}{1089}-16
		>
		\frac{157}{50}\frac{5600}{1089}-16
		=
		\frac{160}{1089}>0.
		\]
		Thus \(\cT'(t)<0\) on \((0,1/5]\), and \(\cT\) is strictly decreasing there.
		
		If \(N\ge5\), then
		\[
		0<\frac1{N+1}<\frac1N\le\frac15.
		\]
		Since \(\cT(1/N)=T(P_N)\),
		\[
		T(P_{N+1})-T(P_N)
		=
		\cT\left(\frac1{N+1}\right)-\cT\left(\frac1N\right)
		=
		-\int_{1/(N+1)}^{1/N}\cT'(t)\,dt
		>0.
		\]

		We now verify directly the two remaining inequalities
		\[
		T(P_3)<T(P_4)<T(P_5).
		\]
		Together with the monotonicity already proved for \(N\ge 5\), this gives the
		full monotonicity of \(T(P_N)\) for all \(N\ge 3\).
		
		We first consider the equilateral triangle.  If the side length is \(a\), and the
		triangle has vertices \((0,0),(a,0),(a/2,\sqrt3 a/2)\), then
		\[
		u(x,y)=
		\frac{1}{2\sqrt3\,a}\,
		y(\sqrt3 x-y)(\sqrt3(a-x)-y)
		\]
		satisfies
		\[
		-\Delta u=1,\qquad u=0\quad\hbox{on the boundary}.
		\]
		A direct integration gives
		\[
		T=\int u
		=
		\frac{\sqrt3}{320}a^4.
		\]
		For the area-normalized equilateral triangle, \(a^2=4\pi/\sqrt3\). Hence
		\[
		T(P_3)=\frac{\sqrt3\,\pi^2}{60}.
		\]
		
		For the area-normalized square, the standard rectangular torsion formula gives
		\[
		T(P_4)
		=
		\frac{\pi^2}{12}
		\left(
		1-\frac{192}{\pi^5}
		\sum_{\ell=0}^{\infty}
		\frac{\tanh((2\ell+1)\pi/2)}{(2\ell+1)^5}
		\right).
		\]
		Since \(\tanh x<1\),
		\[
		T(P_4)
		>
		\frac{\pi^2}{12}
		\left(
		1-\frac{192}{\pi^5}
		\sum_{\ell=0}^{\infty}\frac{1}{(2\ell+1)^5}
		\right)
		=
		\frac{\pi^2}{12}
		\left(
		1-\frac{186\zeta(5)}{\pi^5}
		\right).
		\]
		We use the elementary bound
		\[
		\zeta(5)
		<
		1+\frac1{2^5}+\int_2^\infty x^{-5}\,dx
		=
		\frac{67}{64}.
		\]
		Together with \(\pi>157/50\), this gives
		\[
		\frac{186\zeta(5)}{\pi^5}
		<
		\frac{186\cdot 67}{64}\left(\frac{50}{157}\right)^5
		<
		\frac{13}{20}.
		\]
		Therefore
		\[
		T(P_4)>\frac{\pi^2}{12}\cdot\frac7{20}
		=
		\frac{7\pi^2}{240}.
		\]
		Since \(\sqrt3<7/4\), we have
		\[
		T(P_3)=\frac{\sqrt3\,\pi^2}{60}
		<
		\frac{7\pi^2}{240}
		<
		T(P_4).
		\]
		
		It remains to prove \(T(P_4)<T(P_5)\).  We first obtain an upper bound for
		\(T(P_4)\).  Keeping only the first positive term in the square series gives
		\[
		T(P_4)
		<
		\frac{\pi^2}{12}
		\left(
		1-\frac{192}{\pi^5}\tanh\frac{\pi}{2}
		\right).
		\]
		Since
		\[
		\tanh\frac{\pi}{2}
		=
		\frac{e^\pi-1}{e^\pi+1},
		\]
		and since \(\pi>157/50\) implies
		\[
		e^\pi>
		\sum_{k=0}^{9}\frac{(157/50)^k}{k!}
		>23,
		\]
		we get
		\[
		\tanh\frac{\pi}{2}>\frac{11}{12}.
		\]
		Using also \(\pi<355/113\), we find
		\[
		T(P_4)
		<
		\frac{(355/113)^2}{12}
		\left(
		1-\frac{176}{(355/113)^5}
		\right)
		<
		\frac7{20}.
		\]
		
		We now prove that \(T(P_5)>7/20\).  We use the exact
		Schwarz--Christoffel/Bergman formula as before:
		\[
		T(P_5)
		=
		\frac14
		\left(
		M_5-\pi S_5^{-2}Q_5
		\right),
		\]
		where
		\[
		M_5=
		\frac{\pi}{2}
		\left(
		\frac{\pi}{5}\cot\frac{\pi}{5}
		+
		\frac{\pi}{15}\tan\frac{\pi}{5}
		\right)
		=
		\frac{\pi^2}{10}
		\left(
		\cot\frac{\pi}{5}
		+
		\frac13\tan\frac{\pi}{5}
		\right),
		\]
		and
		\[
		S_5=\sum_{m\ge0}(5m+1)d_m^2,\qquad
		Q_5=
		\sum_{j\ge1}5j
		\left(
		\sum_{m\ge0}d_m d_{m+j}
		\right)^2,
		\]
		with
		\[
		d_m=\frac{(2/5)_m}{m!(5m+1)}.
		\]
		Since \(S_5\ge1\), it is enough to bound \(M_5\) from below and \(Q_5\) from
		above.
		
		Let
		\[
		x=\tan\frac{\pi}{5}.
		\]
		Then
		\[
		x^2=5-2\sqrt5.
		\]
		Since \(\sqrt5>502/225\), we get
		\[
		x^2<\frac{121}{225},
		\qquad\hbox{hence}\qquad
		x<\frac{11}{15}.
		\]
		The function \(x\mapsto x^{-1}+x/3\) is decreasing on \((0,1)\). Therefore
		\[
		\cot\frac{\pi}{5}
		+
		\frac13\tan\frac{\pi}{5}
		=
		\frac1x+\frac{x}{3}
		>
		\frac{15}{11}+\frac{11}{45}
		=
		\frac{796}{495}
		>
		\frac85.
		\]
		Using \(\pi>157/50\), we obtain
		\[
		M_5
		>
		\frac{(157/50)^2}{10}\cdot\frac85
		=
		\frac{24649}{15625}.
		\]
		
		It remains to show that \(Q_5<1/20\).  For \(m\ge1\), the coefficients satisfy
		\[
		d_m
		=
		\frac{2}{25}\frac{1}{m(m+1/5)}
		\prod_{\ell=1}^{m-1}
		\left(1+\frac{2}{5\ell}\right).
		\]
		Using \(\log(1+y)\le y\), \(H_{m-1}\le 1+\log m\), and
		\(e^{2/5}<3/2\), we get
		\[
		d_m
		\le
		\frac{3}{25}m^{-8/5},
		\qquad m\ge1.
		\]
		Also \(d_m\) is decreasing, because
		\[
		\frac{d_{m+1}}{d_m}
		=
		\frac{m+2/5}{m+1}\frac{5m+1}{5m+6}
		<1.
		\]
		Set
		\[
		D:=\sum_{m\ge1}d_m.
		\]
		A direct rational calculation gives
		\[
		\sum_{m=1}^{10}d_m<\frac{11}{75}.
		\]
		The tail estimate above gives
		\[
		\sum_{m\ge11}d_m
		\le
		\frac3{25}\int_{10}^{\infty}x^{-8/5}\,dx
		=
		\frac15\,10^{-3/5}
		<
		\frac4{75}.
		\]
		Hence
		\[
		D<\frac15.
		\]
		
		For
		\[
		B_j:=\sum_{m\ge0}d_m d_{m+j},
		\]
		monotonicity of \(d_m\) gives
		\[
		B_j
		=
		d_j+\sum_{m\ge1}d_m d_{m+j}
		\le
		d_j+d_{j+1}D
		<
		d_j+\frac15d_{j+1}.
		\]
		Therefore
		\[
		Q_5
		=
		5\sum_{j\ge1}jB_j^2
		<
		5\sum_{j\ge1}j
		\left(d_j+\frac15d_{j+1}\right)^2.
		\]
		For the first ten terms, direct rational arithmetic gives
		\[
		5\sum_{j=1}^{10}j
		\left(d_j+\frac15d_{j+1}\right)^2
		=
		\frac{
			5311812193770392187146788036183
		}{
			120959437643980979919433593750000
		}
		<
		\frac{11}{250}.
		\]
		For the tail, using \(d_j\le (3/25)j^{-8/5}\), we have
		\[
		d_j+\frac15d_{j+1}
		\le
		\frac65d_j
		\le
		\frac{18}{125}j^{-8/5}.
		\]
		Thus
		\[
		5\sum_{j\ge11}j
		\left(d_j+\frac15d_{j+1}\right)^2
		\le
		5\left(\frac{18}{125}\right)^2
		\int_{10}^{\infty}x^{-11/5}\,dx
		=
		\frac{54}{625}10^{-6/5}
		<
		\frac{18}{3125}.
		\]
		Consequently
		\[
		Q_5
		<
		\frac{11}{250}+\frac{18}{3125}
		<
		\frac1{20}.
		\]
		
		Combining the estimates for \(M_5\) and \(Q_5\), and using \(\pi<22/7\), we obtain
		\[
		T(P_5)
		\ge
		\frac14(M_5-\pi Q_5)
		>
		\frac14
		\left(
		\frac{24649}{15625}
		-
		\frac{22}{7}\cdot\frac1{20}
		\right)
		=
		\frac{310711}{875000}
		>
		\frac7{20}.
		\]
		Since \(T(P_4)<7/20<T(P_5)\), we have proved
		\[
		T(P_4)<T(P_5).
		\]
		Together with the first inequality, this gives
		\[
		T(P_3)<T(P_4)<T(P_5).
		\]
		Therefore, combined with the already established monotonicity for all
		\(N\ge5\), the sequence \(T(P_N)\) is strictly increasing for every
		integer \(N\ge3\).
	\end{proof}

	\section{Monotonicity for the first mixed eigenvalue on \texorpdfstring{$\Omega_q$}{Omega q}}
	\label{secmonoeigenvalue}
	
	Let
	\[
	\Omega_q=\{(x,y):0<x<q,\ 0<y<1-x/q\},\qquad q>0,
	\]
	with vertices
	\[
	A=(0,0),\qquad B=(q,0),\qquad C=(0,1).
	\]
	We impose a Dirichlet condition on the vertical side \(AC\) and Neumann conditions
	on the other two sides. Denote by \(\mu(q)\) the first eigenvalue of the mixed
	problem
	\[
	\begin{cases}
		-\Delta u=\mu(q)u & \text{in }\Omega_q,\\
		u=0 & \text{on }AC,\\
		u_y=0 & \text{on }AB,\\
		u_x+q u_y=0 & \text{on }BC.
	\end{cases}
	\]
	Equivalently,
	\[
	\mu(q)
	=
	\min_{\substack{v\in H^1(\Omega_q)\\ v=0\text{ on }AC}}
	\frac{\displaystyle\int_{\Omega_q}|\nabla v|^2}
	{\displaystyle\int_{\Omega_q}v^2}.
	\]
	The first eigenvalue is simple, and the corresponding eigenfunction can be chosen
	strictly positive in \(\Omega_q\). We denote this positive eigenfunction by \(u_q\). The following monotonicity follows from Jerison–Nadirashvili \cite[Theorem 1.1]{JerisonNadirashvili}. Indeed, reflect $\Omega_q$ oddly across the Dirichlet side $AC$ and evenly across the Neumann side $AB$ leads to the monotonicity result in the right triangle for the mixed condition.
	
	\begin{theorem}[Directional monotonicity of the first eigenfunction]
		\label{monoeigenfunction}
		For every \(q>0\), the positive first mixed eigenfunction \(u_q\) satisfies
		\[
		\partial_y u_q<0
		\qquad\text{and}\qquad
		\partial_x u_q>0
		\qquad\text{in }\Omega_q .
		\]
	\end{theorem}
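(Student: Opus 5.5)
The plan is to follow the structure of the proof of Proposition~\ref{thm:intro-mixed-gradient}. The pointwise maximum principle is not available for \(\Delta+\mu\), so I replace it with a Rayleigh-quotient argument that uses the variational characterization of \(\mu(q)\). Write \(u=u_q>0\), \(\mu=\mu(q)\), \(N=(1,q)\), \(T=(q,-1)\). Both \(w=u_y\) and \(z=u_x\) satisfy \(-\Delta w=\mu w\) and \(-\Delta z=\mu z\) in \(\Omega_q\). Their boundary conditions are computed exactly as in Step~2 of that proof.

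\begin{itemize}
\item On \(AB\) we have \(w=0\), and on \(AC\) we also have \(w=0\), since \(u(0,y)\equiv0\).
\item On \(BC\), differentiating \(u_x+qu_y=0\) in the direction \(T\) and using \(u_{xx}=-\mu u-u_{yy}\) gives
\[
(-N+qT)\cdot\nabla w=q\mu u \qquad\text{on } BC.
\]
\end{itemize}

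As in the torsion case, I expect the regularity at the corners to be handled by Grisvard-type results, since the Dirichlet--Neumann junctions have angles at most \(\pi/2\). Alternatively one can work on corner-truncated triangles.

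\textbf{Step 1: \(u_y\le0\).} Test the equation for \(w\) against \(w_+\):
\[
\int_{\Omega_q}|\nabla w_+|^2-\mu\int_{\Omega_q}w_+^2=\int_{BC}w_+\,\partial_\nu w\,ds .
\]
On \(BC\) the boundary relation gives \(N\cdot\nabla w=qT\cdot\nabla w-q\mu u\). The tangential part contributes
\[
\frac{q}{|N|}\int_{BC}\partial_T\!\left(\tfrac12 w_+^2\right)ds .
\]
This reduces to endpoint values of \(w_+^2\) at \(B\) and \(C\). Both vanish, because \(w=0\) on \(AB\) and on \(AC\). What remains is
\[
-\frac{q\mu}{|N|}\int_{BC}w_+u\,ds\le0 .
\]
Since \(w_+=0\) on \(AC\), it is an admissible competitor for \(\mu\). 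The Rayleigh quotient is therefore \(\le\mu\), hence \(=\mu\).

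So \(w_+\) is either \(0\) or a first eigenfunction. In the latter case it is strictly positive in \(\Omega_q\), hence by continuity \(w=w_+\) is positive on \(BC^\circ\), where \(u>0\) as well. Then the boundary term is strictly negative, so the Rayleigh quotient is strictly below \(\mu\), a contradiction. Hence \(w\le0\).

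For strictness, \(w\not\equiv0\), because otherwise the boundary relation would force \(u=0\) on \(BC\). The strong maximum principle for \(-\Delta w=\mu w\) with \(w\le0\) then gives \(w<0\) in \(\Omega_q\). Hopf's lemma at a boundary zero, combined with the boundary relation, should also give \(w<0\) on \(BC^\circ\), as in the torsion proof.

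\textbf{Step 2: \(u_x>0\).} Hopf's lemma on \(AC^\circ\) gives \(z>0\) there. On \(BC^\circ\) we have \(z=-qu_y>0\) by Step~1. On \(AB\), differentiating \(u_y=0\) gives \(\partial_\nu z=0\).

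Test the equation for \(z\) against \(z_-\). Its support \(\{z<0\}\) stays away from \(AC\cup BC\), and the boundary term on \(AB\) vanishes. Hence
\[
\int|\nabla z_-|^2=\mu\int z_-^2 .
\]
So if \(z_-\ne0\), it is an admissible function vanishing on \(AC\) with Rayleigh quotient \(\mu\), and therefore a first mixed eigenfunction. That would make it strictly positive throughout \(\Omega_q\), which contradicts \(z_-=0\) near \(AC\). Thus \(z\ge0\), and the strong maximum principle upgrades this to \(z>0\).

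\textbf{Main obstacle.} I expect the main difficulty to be justifying the integrations by parts near \(B\) and \(C\), where the quantities involve second derivatives of \(u\). One needs \(w\in H^1\), a trace of \(\partial_\nu w\) on \(BC\), and vanishing of the endpoint contributions. I would first try corner regularity for the mixed problem: the angle at \(C\) is below \(\pi/2\), and at \(B\) even reflection across \(AB\) reduces to a pure Neumann corner of angle \(<\pi\). If that is insufficient, I would cut off near the vertices and pass to the limit.

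As an independent check, the reflection route also works. Reflect oddly across \(AC\) and evenly across \(AB\), which yields a rhombus on which \(u\) is an odd-in-\(x\) Neumann eigenfunction. Then apply Jerison--Nadirashvili \cite[Theorem 1.1]{JerisonNadirashvili}, as the paper suggests.
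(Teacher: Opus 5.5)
Your proposal is correct and follows essentially the same route as the paper. Both use the oblique relation $(-N+qT)\cdot\nabla w=q\mu u$ on $BC$, reduce the tangential part to endpoint terms, and run a Rayleigh-quotient and simplicity argument for $w=u_y$ and then for $z=u_x$, with Hopf on $AC^\circ$ and $\partial_\nu z=0$ on $AB$. The one difference is that you test globally against $w_+$ and $z_-$ instead of the zero extension of a single nodal component. This is a cosmetic change, and slightly cleaner: the endpoint terms land at the vertices $B,C$, where $w=0$ by corner continuity, rather than at nodal-arc endpoints. You also justify $w\not\equiv0$, which the paper only asserts.
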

	
	In the following, we give a separate proof, without using the continuity method as in \cite{JerisonNadirashvili}.
	\begin{proof}
		We write \(u=u_q\) and \(\mu=\mu(q)\). Let
		\[
		w=u_y .
		\]
		Differentiating the eigenvalue equation gives
		\[
		-\Delta w=\mu w
		\qquad\text{in }\Omega_q .
		\]
		On \(AB\), the Neumann condition gives \(w=0\). On \(AC\), since \(u=0\) and
		\(y\) is tangential to \(AC\), we also have \(w=u_y=0\).
		
		It remains to understand the boundary behavior of \(w\) on the slanted side \(BC\).
		Let
		\[
		N=(1,q),\qquad T=(q,-1),
		\]
		where \(N\) is the outward normal to \(BC\), up to normalization, and \(T\) is a
		tangent vector to \(BC\). Differentiating the Neumann condition
		\[
		u_x+q u_y=0
		\]
		along \(T\), we get
		\[
		(q\partial_x-\partial_y)(u_x+q u_y)=0.
		\]
		Thus
		\[
		q u_{xx}+(q^2-1)u_{xy}-q u_{yy}=0.
		\]
		Using \(u_{xx}+u_{yy}=-\mu u\), this becomes
		\[
		(q^2-1)u_{xy}-2q u_{yy}=q\mu u.
		\]
		In terms of \(w=u_y\), this is
		\[
		(q^2-1)w_x-2q w_y=q\mu u
		\qquad\text{on }BC.
		\]
		Equivalently,
		\[
		(-N+qT)\cdot\nabla w=q\mu u
		\qquad\text{on }BC.
		\]
		Hence
		\[
		N\cdot\nabla w=qT\cdot\nabla w-q\mu u
		\qquad\text{on }BC.
		\]
		
		We now prove that \(w\le 0\). Suppose, to the contrary, that \(w\) has a positive
		nodal component \(D\subset\{w>0\}\). Let
		\[
		\Gamma=\partial D\cap BC.
		\]
		Multiplying \(-\Delta w=\mu w\) by \(w\) and integrating over \(D\), we obtain
		\[
		\int_D |\nabla w|^2
		=
		\mu\int_D w^2
		+
		\int_{\partial D}w\,\partial_\nu w .
		\]
		On the nodal part of \(\partial D\), \(w=0\). On \(AB\cup AC\), again \(w=0\).
		Therefore the only possible boundary contribution comes from \(\Gamma\). Since the
		unit outward normal on \(BC\) is a positive multiple of \(N\), it is enough to compute
		the sign of
		\[
		\int_\Gamma w\,N\cdot\nabla w.
		\]
		Using the boundary identity above,
		\[
		\int_\Gamma w\,N\cdot\nabla w
		=
		q\int_\Gamma w\,T\cdot\nabla w
		-
		q\mu\int_\Gamma uw .
		\]
		On each connected component of \(\Gamma\), the first integral is an endpoint term:
		\[
		\int_\Gamma w\,T\cdot\nabla w
		=
		\frac12\int_\Gamma T\cdot\nabla(w^2)
		=
		0,
		\]
		because \(w=0\) at the endpoints of the nodal arc. Hence
		\[
		\int_\Gamma w\,N\cdot\nabla w
		=
		-q\mu\int_\Gamma uw
		\le 0,
		\]
		with strict inequality if \(\Gamma\) has positive length.
		
		Consequently,
		\[
		\int_D |\nabla w|^2
		\le
		\mu\int_D w^2.
		\]
		Extending \(w|_D\) by zero to all of \(\Omega_q\), we obtain an admissible test
		function for the Rayleigh quotient of \(\mu(q)\). If the inequality is strict, this
		contradicts the variational characterization of \(\mu(q)\). If equality holds, then
		the zero extension is itself a first eigenfunction. This is impossible, since the first
		eigenfunction is strictly positive in \(\Omega_q\), whereas the zero extension vanishes
		outside \(D\). Thus \(w\) has no positive nodal component, and so
		\[
		u_y=w\le 0.
		\]
		Since \(w\not\equiv0\), the strong maximum principle applied to \(-w\) gives
		\[
		u_y<0
		\qquad\text{in }\Omega_q .
		\]
		
		We next prove the monotonicity in the \(x\)-direction. Let
		\[
		z=u_x .
		\]
		Then
		\[
		-\Delta z=\mu z
		\qquad\text{in }\Omega_q .
		\]
		On the Dirichlet side \(AC\), the Hopf lemma applied to the positive eigenfunction
		\(u\) gives
		\[
		u_x>0
		\qquad\text{on }AC^\circ.
		\]
		On \(BC\), the Neumann condition gives
		\[
		z=u_x=-q u_y\ge 0,
		\]
		and in fact \(z>0\) on \(BC^\circ\). On \(AB\), differentiating \(u_y=0\) in the
		\(x\)-direction gives
		\[
		z_y=u_{xy}=0,
		\]
		or equivalently \(\partial_\nu z=0\) on \(AB\).
		
		Suppose that \(z\) has a negative nodal component
		\(D\subset\{z<0\}\). Since \(z\ge0\) on \(AC\cup BC\), the component
		\(D\) has no open boundary portion on \(AC\cup BC\). Hence the boundary
		of \(D\) consists, up to endpoints, of nodal arcs where \(z=0\) and
		possibly portions on \(AB\), where \(\partial_\nu z=0\). Therefore,
		multiplying \(-\Delta z=\mu z\) by \(z\) over \(D\) and integrating by
		parts gives
		\[
		\int_D |\nabla z|^2=\mu\int_D z^2 .
		\]
		Let
		\[
		\phi =
		\begin{cases}
			z, & \text{in } D,\\
			0, & \text{in } \Omega_q\setminus D .
		\end{cases}
		\]
		Then \(\phi\in H^1(\Omega_q)\), \(\phi=0\) on \(AC\), and
		\[
		\frac{\int_{\Omega_q}|\nabla\phi|^2}
		{\int_{\Omega_q}\phi^2}
		=\mu(q).
		\]
		Thus \(\phi\) is a minimizer for the first Rayleigh quotient. Since the
		first mixed eigenvalue is simple, \(\phi\) must be a constant multiple of
		the positive first eigenfunction. This is impossible because \(\phi\)
		vanishes identically on the nonempty open set \(\Omega_q\setminus D\).
		Hence no negative nodal component exists, and \(z\ge0\) in \(\Omega_q\). Since \(z\not\equiv0\), the strong maximum principle gives
		\[
		u_x=z>0
		\qquad\text{in }\Omega_q .
		\]
		The theorem follows.
	\end{proof}
	
	We choose the positive first eigenfunction \(u=u_q\) of the mixed problem with the normalization
	\[
	\int_{\Omega_q}u^2=1.
	\]
	Set
	\[
	A(q)=\int_{\Omega_q}u_x^2,\qquad
	B(q)=\int_{\Omega_q}u_y^2.
	\]
	Then
	\[
	\mu(q)=A(q)+B(q).
	\]
	
	We shall use the previous monotonicity result of $u_q$ to prove the monotonicity of the following normalized first eigenvalue with respect to $q$.

	\begin{proof}[Proof of Theorem \ref{monoeigenvalue}]
		We use the same deformation as in the torsion case:
		\[
		\Phi_s(x,y)=\left(\frac{q+s}{q}x,y\right).
		\]
		This maps \(\Omega_q\) onto \(\Omega_{q+s}\). Its velocity field at \(s=0\) is
		\[
		\eta(x,y)=\left(\frac{x}{q},0\right).
		\]
		The Dirichlet side \(AC\) is fixed, and on the horizontal Neumann side \(AB\) one has
		\[
		\eta\cdot\nu=0.
		\]
		Thus only the slanted Neumann side \(BC\) contributes to the shape derivative.
		
		For a simple mixed eigenvalue with normalized eigenfunction \(\int_{\Omega_q}u^2=1\),
		Hadamard's formula gives
		\[
		\mu'(q)
		=
		\int_{\partial\Omega_q} G(u)\,\eta\cdot\nu\,ds,
		\]
		where
		\[
		G(u)=-(\partial_\nu u)^2
		\quad\text{on the Dirichlet part},
		\]
		and
		\[
		G(u)=|\nabla_\tau u|^2-\mu(q)u^2
		\quad\text{on the Neumann part}.
		\]
		Since \(AC\) is fixed and \(\eta\cdot\nu=0\) on \(AB\), only \(BC\) contributes. On
		\(BC\), \(\partial_\nu u=0\), so
		\[
		|\nabla_\tau u|^2=|\nabla u|^2.
		\]
		Parametrize \(BC\) by
		\[
		(x,1-x/q),\qquad 0<x<q.
		\]
		Then
		\[
		\nu=\frac{(1,q)}{\sqrt{1+q^2}},
		\qquad
		ds=\frac{\sqrt{1+q^2}}{q}\,dx,
		\]
		and therefore
		\[
		\eta\cdot\nu\,ds
		=
		\frac{x}{q^2}\,dx.
		\]
		Hence
		\[
		\mu'(q)
		=
		\frac1{q^2}
		\int_0^q
		x\left(|\nabla u|^2-\mu(q)u^2\right)
		\left(x,1-\frac{x}{q}\right)\,dx.
		\]
		We now convert this boundary integral into a bulk integral.
		
		Use \(x u_x\) as a test function in the weak formulation. Since \(x=0\) on the
		Dirichlet side, \(x u_x\) is admissible. Thus
		\[
		\int_{\Omega_q}\nabla u\cdot\nabla(xu_x)
		=
		\mu(q)\int_{\Omega_q}xuu_x.
		\]
		The left-hand side is
		\[
		\begin{aligned}
			\int_{\Omega_q}\nabla u\cdot\nabla(xu_x)
			&=
			\int_{\Omega_q}
			\left(
			u_x^2+xu_xu_{xx}+xu_yu_{xy}
			\right)\\
			&=
			A(q)+\frac12\int_{\Omega_q}x\partial_x(|\nabla u|^2).
		\end{aligned}
		\]
		By the divergence theorem,
		\[
		\int_{\Omega_q}x\partial_x(|\nabla u|^2)
		=
		\int_{\partial\Omega_q}x|\nabla u|^2\nu_x\,ds
		-
		\int_{\Omega_q}|\nabla u|^2.
		\]
		Only the slanted side contributes to the boundary term, and on \(BC\),
		\[
		\nu_x\,ds=\frac1q\,dx.
		\]
		Therefore
		\[
		\int_{\Omega_q}\nabla u\cdot\nabla(xu_x)
		=
		\frac{A(q)-B(q)}2
		+
		\frac1{2q}
		\int_0^q
		x|\nabla u|^2
		\left(x,1-\frac{x}{q}\right)\,dx.
		\]
		
		On the other hand,
		\[
		\int_{\Omega_q}xuu_x
		=
		\frac12\int_{\Omega_q}x\partial_x(u^2).
		\]
		Again by the divergence theorem,
		\[
		\int_{\Omega_q}xuu_x
		=
		\frac1{2q}
		\int_0^q
		xu^2
		\left(x,1-\frac{x}{q}\right)\,dx
		-
		\frac12\int_{\Omega_q}u^2.
		\]
		Using the normalization \(\int_{\Omega_q}u^2=1\), this becomes
		\[
		\int_{\Omega_q}xuu_x
		=
		\frac1{2q}
		\int_0^q
		xu^2
		\left(x,1-\frac{x}{q}\right)\,dx
		-
		\frac12.
		\]
		
		Combining the two identities gives
		\[
		\frac{A(q)-B(q)}2
		+
		\frac1{2q}
		\int_0^q
		x|\nabla u|^2
		\left(x,1-\frac{x}{q}\right)\,dx
		=
		\frac{\mu(q)}{2q}
		\int_0^q
		xu^2
		\left(x,1-\frac{x}{q}\right)\,dx
		-
		\frac{\mu(q)}2.
		\]
		Since
		\[
		\mu(q)=A(q)+B(q),
		\]
		we obtain
		\[
		\frac1q
		\int_0^q
		x\left(|\nabla u|^2-\mu(q)u^2\right)
		\left(x,1-\frac{x}{q}\right)\,dx
		=
		-2A(q).
		\]
		Substituting this into the Hadamard formula yields
		\[
		\mu'(q)
		=
		-\frac{2A(q)}{q}.
		\]
		
		Therefore
		\[
		\frac{d}{dq}\bigl(q\mu(q)\bigr)
		=
		\mu(q)+q\mu'(q)
		=
		A(q)+B(q)-2A(q)
		=
		B(q)-A(q).
		\]
		Equivalently,
		\[
		\frac{d}{dq}\bigl(q\mu(q)\bigr)
		=
		-\bigl(A(q)-B(q)\bigr).
		\]
		
		It remains to prove
		\[
		A(q)-B(q)>0.
		\]
		Use \(yu_y\) as a test function. Since \(u=0\) on \(AC\), its tangential derivative
		there is zero, so \(u_y=0\) on \(AC\) in the trace sense. Hence \(yu_y\) is admissible.
		The weak formulation gives
		\[
		\int_{\Omega_q}\nabla u\cdot\nabla(yu_y)
		=
		\mu(q)\int_{\Omega_q}yuu_y.
		\]
		Now
		\[
		\begin{aligned}
			\int_{\Omega_q}\nabla u\cdot\nabla(yu_y)
			&=
			\int_{\Omega_q}
			\left(
			yu_xu_{xy}+u_y^2+yu_yu_{yy}
			\right)\\
			&=
			B(q)+\frac12\int_{\Omega_q}y\partial_y(|\nabla u|^2).
		\end{aligned}
		\]
		By the divergence theorem,
		\[
		\int_{\Omega_q}y\partial_y(|\nabla u|^2)
		=
		\int_{\partial\Omega_q}y|\nabla u|^2\nu_y\,ds
		-
		\int_{\Omega_q}|\nabla u|^2.
		\]
		Hence
		\[
		\int_{\Omega_q}\nabla u\cdot\nabla(yu_y)
		=
		\frac{B(q)-A(q)}2
		+
		\frac12
		\int_{\partial\Omega_q}y|\nabla u|^2\nu_y\,ds.
		\]
		Thus
		\[
		A(q)-B(q)
		=
		\int_{\partial\Omega_q}y|\nabla u|^2\nu_y\,ds
		-
		2\mu(q)\int_{\Omega_q}yuu_y.
		\]
		The boundary term is nonnegative. Indeed, on \(AB\) one has \(y=0\), on \(AC\) one has
		\(\nu_y=0\), and on the slanted side \(BC\) one has \(y>0\) and \(\nu_y>0\). Therefore
		\[
		\int_{\partial\Omega_q}y|\nabla u|^2\nu_y\,ds\ge0.
		\]
		On the other hand, by the directional monotonicity,
		\[
		u_y<0\qquad\text{in }\Omega_q.
		\]
		Since \(u>0\) and \(y>0\) in \(\Omega_q\), by Theorem \ref{monoeigenfunction}, we have
		\[
		\int_{\Omega_q}yuu_y<0.
		\]
		Consequently,
		\[
		A(q)-B(q)>0.
		\]
		Therefore
		\[
		\frac{d}{dq}\bigl(q\mu(q)\bigr)
		=
		-\bigl(A(q)-B(q)\bigr)
		<0.
		\]
		This proves the strict monotonicity of \(q\mu(q)\).
	\end{proof}
	
	\section{Monotonicity for mixed Dirichlet--Robin torsion}
	\label{mixedrobinsection}
		\begin{theorem}[Upper-half monotonicity for mixed Dirichlet--Robin torsion]
		Let \(\beta\ge 0\) be a constant. Let \(D\subset\mathbb R^2\) be a bounded
		Lipschitz domain which is symmetric with respect to the \(x\)-axis and vertically
		convex. Let
		\begin{equation*}
			\partial D=\overline{\Gamma_D}\cup\overline{\Gamma_R},
			\qquad
			\Gamma_D\cap\Gamma_R=\emptyset,
			\qquad
			\Gamma_D\neq\emptyset,
		\end{equation*}
		where \(\Gamma_D\) and \(\Gamma_R\) are relatively open in \(\partial D\) and are
		symmetric with respect to the \(x\)-axis. Let \(u\) solve
		\begin{equation*}
			\begin{cases}
				-\Delta u=1 & \text{in } D,\\
				u=0 & \text{on } \Gamma_D,\\
				\partial_\nu u+\beta u=0 & \text{on } \Gamma_R.
			\end{cases}
		\end{equation*}
		Put
		\begin{equation*}
			D^+=D\cap\{y>0\},
			\qquad
			\Gamma_D^+=\Gamma_D\cap\{y>0\},
			\qquad
			\Gamma_R^+=\Gamma_R\cap\{y>0\}.
		\end{equation*}
		
		Assume that every smooth arc of \(\partial D\cap\{y>0\}\) is of class
		\(C^{2,\alpha}\). Assume also that the only nonsmooth boundary points of
		\(\partial D\cap\{y>0\}\) are Dirichlet--Robin junctions. At each such junction,
		assume that the tangent cone is non-reentrant, that is, its opening angle is at
		most \(\pi\).
		
		On the smooth part of \(\Gamma_R^+\), define the signed curvature \(\kappa\) by
		\begin{equation*}
			D_\tau\nu=\kappa\tau,
		\end{equation*}
		where \(\nu\) is the exterior unit normal and \(\tau\) is a unit tangent. Assume
		that
		\begin{equation*}
			\kappa+\beta\ge 0
		\end{equation*}
		on the smooth part of \(\Gamma_R^+\). Then
		\begin{equation*}
			u_y\le 0 \quad \text{in } D^+.
		\end{equation*}
		Moreover, if \(u_y\not\equiv 0\) in \(D^+\), then
		\begin{equation*}
			u_y<0 \quad \text{in } D^+.
		\end{equation*}
	\end{theorem}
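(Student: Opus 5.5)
Following the maximum-principle mechanism of Proposition~\ref{thm:intro-mixed-gradient}, set \(w=u_y\), which is harmonic in \(D\), and show it cannot attain a positive maximum on \(\overline{D^+}\). By the symmetry of \(D\), \(\Gamma_D\), \(\Gamma_R\) and uniqueness, \(u(x,-y)=u(x,y)\), so \(w=0\) on \(D\cap\{y=0\}\). Vertical convexity makes \(D\cap\{y=0\}\) exactly the part of \(\partial D^+\) on the axis (every vertical line meets \(D\) in a symmetric interval), so \(\partial D^+\) splits into the axis segment, \(\Gamma_D^+\), \(\Gamma_R^+\), and the junction points.

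\textbf{Step 1: Dirichlet part.} On \(\Gamma_D^+\), \(u=0\) and \(u>0\) inside (maximum principle, \(\beta\ge0\)), so \(\nabla u=-|\nabla u|\nu\) and \(w=-|\nabla u|\nu_y\). Vertical convexity of \(D\) together with \(y>0\) gives \(\nu_y\ge0\) on \(\partial D\cap\{y>0\}\): the upper half boundary is the graph of an upper endpoint of vertical fibres. Hence \(w\le0\) on \(\Gamma_D^+\). Vertical segments of the boundary have \(\nu_y=0\) and cause no trouble.

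\textbf{Step 2: Robin part.} Suppose \(w\) attains a positive maximum \(\max_{\overline{D^+}} w>0\) at \(P\in\Gamma_R^+\) (smooth point). Differentiate \(\partial_\nu u+\beta u=0\) tangentially and use \(\Delta u=-1\) to get a boundary relation for \(\nabla w\). The cleaner approach is a direct computation in a frame \((\tau,\nu)\):
\[
\partial_\nu w=\partial_\nu(\nabla u\cdot e_2)=\nu_y u_{\nu\nu}+\tau_y u_{\tau\nu}.
\]
Here \(u_{\nu\nu}=-1-u_{\tau\tau}\), with \(u_{\tau\tau}=\partial_\tau^2u-\kappa\,\partial_\nu u\) up to sign conventions, and \(u_{\tau\nu}=\partial_\tau(\partial_\nu u)-\kappa\,\partial_\tau u=-(\beta+\kappa)\partial_\tau u\). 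At \(P\), the conditions \(\partial_\tau w=0\) and \(w\) maximal are combined with \(w=\nu_y u_\nu+\tau_y u_\tau=-\beta u\nu_y+\tau_y u_\tau\). The expected outcome is an identity of the form
\[
\partial_\nu w(P)=-\nu_y-(\kappa+\beta)\,\tfrac{w}{\cdots}+\dots\le0
\]
after substituting. This contradicts Hopf's lemma, which requires \(\partial_\nu w(P)>0\). The sign inputs are \(\nu_y\ge0\), \(\kappa+\beta\ge0\), \(w(P)>0\), and \(u>0\). Carrying out this algebra is the main obstacle: one must check that the curvature term \(\kappa\partial_\nu u=-\kappa\beta u\) and the tangential terms combine into \(-(\kappa+\beta)\times(\text{positive})\). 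If the pointwise identity does not close, I would instead use a nodal-domain energy argument as in Theorem~\ref{monoeigenfunction}. On a component \(\Omega'\) of \(\{w>0\}\cap D^+\), integrate \(w\Delta w=0\) and show the boundary term on \(\Gamma_R^+\) is \(\le0\), using the tangential-derivative integration trick there.

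\textbf{Step 3: corners and conclusion.} At Dirichlet--Robin junctions, the non-reentrant angle \(\le\pi\) gives enough regularity (Grisvard) for \(\nabla u\) to be continuous. Alternatively, truncate corners as in Section~\ref{sec:mixed-continuity}, so \(w\) is continuous there and the value is a limit from \(\Gamma_D^+\) or controlled by the argument above. Points on the axis carry \(w=0\). The positive maximum is excluded from the interior by the strong maximum principle. Hence \(w\le0\) in \(D^+\). If \(w\not\equiv0\), the strong maximum principle applied to the harmonic function \(w\le0\) gives \(w<0\) in the connected set \(D^+\); it is connected by vertical convexity and symmetry.
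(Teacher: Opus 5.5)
\textbf{Robin maximum.} Your Step 1 and the overall architecture match the paper's proof: evenness of \(u\), \(w\le0\) on \(\Gamma_D^+\) from \(\nu_y\ge0\), Hopf's lemma at a Robin maximum, and separate treatment of junctions. But Step 2 stops exactly where the argument has to happen. The missing idea is to use \(\partial_\tau w(P)=0\) to eliminate the Hessian entry \(u_{\tau\tau}\); you do not need the relation between \(u_{\tau\tau}\) and the arclength second derivative.
\begin{itemize}
\item Write \(e_y=\tau_y\tau+\nu_y\nu\). Then \(M=w(P)=\tau_yu_\tau-\beta\nu_yu\) gives \(\tau_yu_\tau=M+\beta\nu_yu>0\), so \(\tau_y\ne0\).
\item Next, \(0=\partial_\tau w(P)=\tau_yu_{\tau\tau}+\nu_yu_{\tau\nu}\) together with \(u_{\tau\nu}=-(\kappa+\beta)u_\tau\) gives \(\tau_y^2u_{\tau\tau}=\nu_y(\kappa+\beta)(M+\beta\nu_yu)\ge0\).
\item Hence \(u_{\nu\nu}=-1-u_{\tau\tau}\le-1\), and
\[
\partial_\nu w(P)=\tau_yu_{\tau\nu}+\nu_yu_{\nu\nu}
=-\nu_y-\frac{(\kappa+\beta)(M+\beta\nu_yu)}{\tau_y^2}\le0,
\]
which contradicts Hopf.
\end{itemize}
This is exactly the paper's computation, and the shape you guessed is right. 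As submitted, though, the decisive step is a placeholder. Your energy fallback would need the same boundary identity, and it runs into the same corner problem described next.

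\textbf{Junctions.} Step 3 is false as written. At a Dirichlet--Robin junction of opening \(\omega\in(\pi/2,\pi]\), the leading exponent is \(\lambda=\pi/(2\omega)<1\). So \(|\nabla u|\) blows up like \(r^{\lambda-1}\) whenever the coefficient of \(r^\lambda\sin(\lambda\theta)\) is nonzero. Continuity of \(\nabla u\) at a mixed junction holds in general only for \(\omega\le\pi/2\), which is why it worked in Section~2. Corner truncation does not help, because you would need the sign of \(w\) on the small arcs, and that is the whole question.

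The paper handles this case with the expansion \(u=ar^\lambda\sin(\lambda\theta)+R\) and \(\partial_y(r^\lambda\sin\lambda\theta)=\lambda r^{\lambda-1}\sin(\phi+(\lambda-1)\theta)\). This needs \(\phi\ge\omega-3\pi/2\), which the paper claims follows from vertical convexity. Given vertical convexity, that inequality says exactly that the Robin side does not rise as it leaves the junction, and vertical convexity does not imply it.

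Without it the statement fails. Take the unit disk with \(\beta=0\), Dirichlet data on the arc \(\{e^{i\psi}:|\psi-\pi|<\pi/6\}\) and Neumann data elsewhere. Then \(\kappa=1\), all hypotheses hold, and there is a smooth junction (\(\omega=\pi\)) at \(e^{5\pi i/6}\).
\begin{itemize}
\item Put \(g(\psi)=u(e^{i\psi})\). Since \(u>0\) on \(\Gamma_R\) by Hopf and \(g(5\pi/6)=0\), the mean value theorem gives \(\psi^*\in(\pi/2,5\pi/6)\) with \(g'(\psi^*)<0\).
\item Since \(\nabla u\) is tangential there, \(u_y(e^{i\psi^*})=g'(\psi^*)\cos\psi^*>0\).
\item Hence \(u_y>0\) at points of \(D^+\) near \(e^{i\psi^*}\).
\end{itemize}
So neither your argument nor the paper's can be completed as stated. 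The argument does go through under either of these added hypotheses:
\begin{itemize}
\item every junction in \(\{y>0\}\) has opening at most \(\pi/2\), in which case your Step 3 is correct;
\item more generally, the Robin side is non-rising at each junction of opening in \((\pi/2,\pi]\), in which case the paper's corner expansion applies.
\end{itemize}
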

	
	\begin{proof}
		By the maximum principle for mixed Dirichlet--Robin problems, using
		\(\beta\ge 0\) and \(\Gamma_D\neq\emptyset\), we have
		\begin{equation*}
			u>0 \quad \text{in } D.
		\end{equation*}
		Since \(D\), \(\Gamma_D\), and \(\Gamma_R\) are symmetric with respect to the
		\(x\)-axis, uniqueness implies that \(u\) is even in \(y\). Hence, for
		\begin{equation*}
			w:=u_y,
		\end{equation*}
		we have
		\begin{equation*}
			w=0 \quad \text{on } D\cap\{y=0\}.
		\end{equation*}
		Differentiating \(-\Delta u=1\) with respect to \(y\) gives
		\begin{equation*}
			\Delta w=0 \quad \text{in } D^+.
		\end{equation*}
		
		We first treat the Dirichlet part of the upper boundary. On every smooth arc of
		\(\Gamma_D^+\), the tangential derivative of \(u\) vanishes. Hence
		\begin{equation*}
			w=u_y=(e_y\cdot\nu)u_\nu+(e_y\cdot\tau)u_\tau=\nu_y u_\nu,
		\end{equation*}
		where \(e_y=(0,1)\) and \(\nu_y=e_y\cdot\nu\). By vertical convexity,
		\begin{equation*}
			\nu_y\ge 0
		\end{equation*}
		on the upper boundary. By Hopf's lemma,
		\begin{equation*}
			u_\nu<0
		\end{equation*}
		on the relatively open Dirichlet arcs. Therefore
		\begin{equation*}
			w\le 0 \quad \text{on } \Gamma_D^+.
		\end{equation*}
		
		Next we exclude positive maxima at Dirichlet--Robin junctions in the open upper
		half-plane. Let \(P\in\overline{\Gamma_D^+}\cap\overline{\Gamma_R^+}\) be such
		a junction, and let \(\omega\le\pi\) be the opening angle of the tangent cone of
		\(D\) at \(P\).
		
		If \(\omega\le\pi/2\), standard regularity for mixed boundary value problems in
		polygonal corners gives continuity of \(\nabla u\), and hence of \(w\), up to
		\(P\). Since \(w\le 0\) on the adjacent Dirichlet arc, a positive maximum of
		\(w\) cannot occur at \(P\).
		
		It remains to consider the case
		\begin{equation*}
			\pi/2<\omega\le\pi.
		\end{equation*}
		After translating \(P\) to the origin and rotating the coordinates, the tangent
		cone may be written as
		\begin{equation*}
			K_\omega=\{(r,\theta):r>0,\ 0<\theta<\omega\},
		\end{equation*}
		with the Dirichlet side corresponding to \(\theta=0\) and the Robin side
		corresponding to \(\theta=\omega\). The Robin term \(\beta u\) is lower order in
		the corner expansion. Thus the leading mixed Dirichlet--Robin singular mode is
		the mixed Dirichlet--Neumann mode
		\begin{equation*}
			r^\lambda\sin(\lambda\theta),
			\qquad
			\lambda=\frac{\pi}{2\omega}\in\left[\frac12,1\right).
		\end{equation*}
		Consequently, near \(P\),
		\begin{equation*}
			u(r,\theta)=a r^\lambda\sin(\lambda\theta)+R(r,\theta),
			\qquad
			|\nabla R(r,\theta)|=o(r^{\lambda-1}).
		\end{equation*}
		Since \(u>0\) in \(D\), either \(a>0\), or else this leading singular coefficient
		vanishes.
		
		If \(a=0\), then the first singular term with exponent below \(1\) is absent.
		The next mixed corner exponent is
		\begin{equation*}
			\frac{3\pi}{2\omega}>1.
		\end{equation*}
		Thus \(\nabla u\), and hence \(w\), has a finite limit at \(P\). Since
		\(w\le 0\) on the adjacent Dirichlet arc, a positive maximum of \(w\) cannot occur
		at \(P\).
		
		Assume now that \(a>0\). Let \(\phi\) be the angle of the vertical direction
		\(e_y\) in the above polar coordinates. We choose the branch so that \(e_y\)
		lies on the exterior side of the upper tangent cone adjacent to the Dirichlet
		side. By vertical convexity of \(D\),
		\begin{equation*}
			\omega-\frac{3\pi}{2}\le \phi\le 0.
		\end{equation*}
		For \(0<\theta<\omega\), since \(\lambda\omega=\pi/2\), we have
		\begin{equation*}
			-\pi<\phi+(\lambda-1)\theta<0.
		\end{equation*}
		Indeed, the upper bound follows from \(\phi\le 0\) and \(\lambda<1\), while the
		lower bound follows from
		\begin{equation*}
			\phi+(\lambda-1)\theta
			>
			\phi+(\lambda-1)\omega
			=
			\phi+\frac{\pi}{2}-\omega
			\ge 
			-\pi.
		\end{equation*}
		Differentiating the leading singular term in the \(e_y\)-direction gives
		\begin{equation*}
			\partial_y\left(r^\lambda\sin(\lambda\theta)\right)
			=
			\lambda r^{\lambda-1}
			\sin\bigl(\phi+(\lambda-1)\theta\bigr).
		\end{equation*}
		The sine factor is strictly negative for \(0<\theta<\omega\). Since \(a>0\) and
		\begin{equation*}
			|\nabla R(r,\theta)|=o(r^{\lambda-1}),
		\end{equation*}
		we obtain
		\begin{equation*}
			u_y<0
		\end{equation*}
		in a punctured neighborhood of \(P\) inside \(D^+\). Hence a positive maximum of
		\(w\) cannot occur at \(P\).
		
		We now run the maximum-principle argument. Suppose, to the contrary, that
		\begin{equation*}
			M:=\max_{\overline{D^+}} w>0.
		\end{equation*}
		The strong maximum principle excludes an interior maximum. The symmetry axis
		satisfies \(w=0\), the Dirichlet part satisfies \(w\le 0\), and the preceding
		corner analysis excludes positive maxima at all nonsmooth boundary points of
		\(\partial D\cap\{y>0\}\). By assumption, there are no other nonsmooth points on
		the upper boundary. Therefore \(M\) is attained at a smooth point
		\begin{equation*}
			P\in\Gamma_R^+.
		\end{equation*}
		
		At \(P\), write
		\begin{equation*}
			e_y=\alpha\tau+\gamma\nu,
			\qquad
			\gamma=e_y\cdot\nu=\nu_y\ge 0.
		\end{equation*}
		The Robin condition gives
		\begin{equation*}
			u_\nu=-\beta u.
		\end{equation*}
		Differentiating the Robin condition tangentially and using \(D_\tau\nu=\kappa\tau\),
		we obtain
		\begin{equation*}
			u_{\tau\nu}+(\kappa+\beta)u_\tau=0
			\quad \text{on } \Gamma_R^+,
		\end{equation*}
		where \(u_{\xi\eta}=D^2u[\xi,\eta]\).
		
		Since \(P\) is a boundary maximum point of the harmonic function \(w\), Hopf's
		lemma gives
		\begin{equation*}
			\partial_\nu w(P)>0,
		\end{equation*}
		while the tangential derivative satisfies
		\begin{equation*}
			D_\tau w(P)=0.
		\end{equation*}
		On the other hand,
		\begin{equation*}
			M=w(P)=\alpha u_\tau+\gamma u_\nu
			=\alpha u_\tau-\gamma\beta u.
		\end{equation*}
		Thus
		\begin{equation*}
			\alpha u_\tau=M+\gamma\beta u>0.
		\end{equation*}
		In particular, \(\alpha\neq 0\).
		
		From \(D_\tau w(P)=0\), we have
		\begin{equation*}
			0=D_\tau w
			=D^2u[\tau,e_y]
			=\alpha u_{\tau\tau}+\gamma u_{\tau\nu}.
		\end{equation*}
		Using
		\begin{equation*}
			u_{\tau\nu}=-(\kappa+\beta)u_\tau,
		\end{equation*}
		we get
		\begin{equation*}
			\alpha^2 u_{\tau\tau}
			=
			\gamma(\kappa+\beta)\alpha u_\tau
			=
			\gamma(\kappa+\beta)(M+\gamma\beta u)
			\ge 0.
		\end{equation*}
		Hence
		\begin{equation*}
			u_{\tau\tau}\ge 0.
		\end{equation*}
		Since
		\begin{equation*}
			u_{\tau\tau}+u_{\nu\nu}=\Delta u=-1,
		\end{equation*}
		we have
		\begin{equation*}
			u_{\nu\nu}<0.
		\end{equation*}
		Therefore
		\begin{equation*}
			\begin{aligned}
				\partial_\nu w
				&=D^2u[\nu,e_y] \\
				&=\alpha u_{\tau\nu}+\gamma u_{\nu\nu} \\
				&=-(\kappa+\beta)\alpha u_\tau+\gamma u_{\nu\nu} \\
				&=-(\kappa+\beta)(M+\gamma\beta u)+\gamma u_{\nu\nu} \\
				&\le 0.
			\end{aligned}
		\end{equation*}
		This contradicts \(\partial_\nu w(P)>0\). Hence \(M>0\) is impossible, and therefore
		\begin{equation*}
			u_y=w\le 0 \quad \text{in } D^+.
		\end{equation*}
		
		Finally, if \(u_y\not\equiv 0\) in \(D^+\) and \(u_y\) vanished at an interior point
		of \(D^+\), then the harmonic function \(w=u_y\le 0\) would attain its maximum
		\(0\) in the interior. The strong maximum principle would imply \(w\equiv 0\), a
		contradiction. Thus
		\begin{equation*}
			u_y<0 \quad \text{in } D^+.
		\end{equation*}
	\end{proof}

	\section*{AI Assistance Statement}
	
	The authors used AI tools only for auxiliary symbolic manipulation and exploratory checking of formal series expansions. All arguments, identities, estimates, and final text were independently verified and are the sole responsibility of the authors.

\end{document}